\newtheorem{theorem}{Theorem}[section]
\newtheorem{lemma}{Lemma}[section]
\newtheorem{proposition}{Proposition}[section]
\theoremstyle{definition}
\newtheorem{definition}{Definition}[section]
\numberwithin{equation}{section}
\theoremstyle{remark}
\begin{document}
\begin{CJK*}{GBK}{song}

\title[The Landau-Lifshitz-Bloch equation on the thin film]
{The Landau-Lifshitz-Bloch equation on the thin film}

\author[Y. He]{Yuxun He}
\address{College of Mathematics and Statistics, Chongqing University, Chongqing, 401331, China.}
\email{hyuxun@163.com}

\author[H. Wang]{Huaqiao Wang}
\address{College of Mathematics and Statistics, Chongqing University, Chongqing, 401331, China.}
\email{wanghuaqiao@cqu.edu.cn}

\keywords{micromagnetics, Landau-Lifshitz-Bloch equation, ferromagnetic film, stray field, weak solution, limit equation}

\date{\today}

\begin{abstract}
We consider the initial boundary value problem of Landau-Lifshitz-Bloch equation on three-dimensional ferromagnetic films, where the effective field contains the stray field controlled by Maxwell equation and the exchange field contains exchange constant. In this paper, we establish the existence of weak solutions of the equation by using the Faedo-Galerkin approximation method.  We also derive its two-dimensional limit equation in a mathematically rigorous way when the film thickness tends to zero under appropriate compactness conditions. Moreover, we obtain an equation that can better describe the magnetic dynamic behavior of ferromagnetic films with negligible thickness at high temperature.
\end{abstract}

\maketitle

\section{Introduction}
In 1935, Landau and Lifshitz derived the Landau-Lifshitz (LL) equation \cite{LDE,FA,MC,GP} to describe the evolution of spin field in continuous ferromagnetic field below the critical (Curie) temperature. LL equation is the cornerstone of the dynamic magnetization theory of ferromagnetic materials, and its micromagnetic method is also the basis of most theoretical studies of thermal magnetization dynamics. The most important feature is that the magnetization is constant. However, the damping term in LL equation is only applicable to the case of small damping, and it is no longer applicable when encountering large damping. Glibert deduced the Landau-Lifshitz-Gilbert (LLG) equation \cite{GT,KP,MT,CI} based on the LL equation, which can be used in the case of large damping. It can be transformed into LL equation, which is mathematically identical. The motion of domain wall is the basic mechanism of ferromagnetic mode dynamics. The internal structure of domain wall in thin films and its influence on the formation of magnetic mode have always been a topic of general interest. According to the magnetization dynamics described by LL equation or LLG equation, physicists and mathematicians \cite{GJ,KS,DKM,HT} have done a lot of research on the asymptotic behavior of the magnetization limit of thin films under different parameter mechanisms by the energy method. Based on the limit asymptotic behavior of magnetization distribution on the film, they also discussed the limit asymptotic behavior of weak solutions of LL equation or LLG equation, and deduced the limit equation on the film \cite{WGC,MR,CMO,CM}.

In recent years, the research on magnetic materials in the field of thermal excitation has become particularly important. So more and more studies focus on the dynamic behavior of ferromagnetic materials at high temperature. Although micromagnetic techniques based on LL equation or LLG equation can perfectly describe the micro nano scale magnetic film system at low temperature, they can not accurately describe the magnetodynamic behavior at high temperature (especially close to the Curie temperature of the material). Therefore, from the perspective of magnetization dynamic modeling, micromagnetism theory needs to be further developed. In 1997, Garanin \cite{GDA1,GDA2} proposed an effective Landau-Lifshitz-Bloch (LLB) equation at high temperature, especially when it is close to Curie temperature $T_{C}$ and ultrafast time scale, as follows:
\begin{equation}\label{1.1}
\frac{\partial\textbf{u}}{\partial t}=\gamma\textbf{u}\times\textbf{H}_{eff}+L_{1}\frac{1}{|\textbf{u}|^{2}}\left(\textbf{u}\cdot\textbf{H}_{eff}\right)\textbf{u}-L_{2}
\frac{1}{|\textbf{u}|^{2}}\textbf{u}\times\left(\textbf{u}\times\textbf{H}_{eff}\right),
\end{equation}
where the spin polarization $\textbf{u}(\textbf{x},t)=\frac{\textbf{m}}{m^{0}_{s}}$($t>0$, $\textbf{x}\in \mathcal{B}\subset\mathbb{R}^{3}$), $\textbf{m}$ denotes magnetization, $m^{0}_{s}$ is the saturation magnetization value when $T=0$; $\gamma>0$ stands for the gyromagnetic ratio; $L_{1}$ and $L_{2}$ are the longitudinal and transverse damping coefficients, respectively. Effective field $\textbf{H}_{eff}$ in this equation is different from LL equation or LLG equation. In form, there will be a term related to longitudinal susceptibility, temperature and spin polarization. LLB equation successfully connects ferromagnetism with thermodynamic properties. One of its important properties is that magnetization is no longer conserved, but a dynamic variable.
Le \cite{KNL} considered a definite form of LLB equation\eqref{1.1}, and sets $T\geq T_{c}$, then $L_{1}=L_{2}$. The effective field is $\textbf{H}_{eff}=\Delta\textbf{u}-\frac{1}{\chi_{11}}\left(1+\frac{3}{5}\frac{T}{T-T_{c}}|\textbf{u}|^{2}\right)\textbf{u},$
where $\chi_{11}$ is the longitudinal magnetic susceptibility.
He proved the global existence of weak solutions of the LLB equation by using Faedo-Galerkin approximation method.
Inspired by him, Jia \cite{ZJ} proved the local existence of strong solutions. Guo et al. \cite{GL} further proved the global existence of smooth solutions. After that, Guo et al. \cite{GBL} also proved the global existence of weak solutions and smooth solutions of Landau-Lifshitz-Bloch-Maxwell equation which coupled by LLB equation and Maxwell equation system.

From the perspective of science and technology, the dynamic behavior of magnetization distribution on ferromagnetic thin films is an interesting and important problem. The dynamic problem of LLB equation on thin films also has important physical significance and application value. Physicists have done a lot of research on the related problems of magnetic films by LLB equation, such as using LLB equation to simulate the magnetization dynamics of a single crystal in the film, and then studying the related physical processes \cite{LARE}. In addition, LLB equation also has a wide range of practical applications in thin films, including describing the ultrafast demagnetization and subsequent recovery process caused by pulsed laser irradiation of magnetic thin films \cite{SA}, studying the spin-Seebeck effect (SSE) \cite{HN} found in the thin films, so as to help the development of new spin thermoelectronic devices, and studying heat-assisted magnetic recording (HAMR) technology to improve the storage density of storage media such as ferromagnetic thin films \cite{TWM}.

However, it is different from the LL equation or LLG equation which has a large number of results in mathematics. At present, there is little research about LLB equation on thin films in mathematics, especially there is no accurate derivation and proof of its limit equation.
Le \cite{KNL} has proved the existence of weak solutions of LLB equation considering only exchange field. In this paper, we couple the simplified Maxwell equation with LLB equation, add stray field term to the effective field, and prove the existence of weak solutions of LLB equation in this case. At the same time, we establish the three-dimensional model of LLB equation on thin film and continue the parameter setting when Melcher \cite{CM} studied the limit problem of LLG equation, that is, considering the case of medium time scale and small damping. Based on Le \cite{KNL}, we add the self induced magnetic field (stray field) of ferromagnet, which is controlled by simplified Maxwell equation \cite{JDJ}. We consider the LLB equation \eqref{1.1} on the film $\Omega(h):\Omega\times(0,h)(\Omega\in\mathbb{R}^{2})$, assuming $T\geq T_{c}$, then the longitudinal damping coefficient $L_{1}$ and lateral damping coefficient $L_{2}$ equals. The effective field $\textbf{H}_{eff}=A\Delta\textbf{u}-\frac{1}{\chi_{11}}\left(1+\frac{3}{5}\frac{T}{T-T_{c}}|\textbf{u}|^{2}\right)\textbf{u}-\nabla U,$
where $A$ is the exchange constant, $\chi_{11}$ is the longitudinal susceptibility, and the gradient field $-\nabla U$ is the stray field induced by spin polarization.
The spin polarization $\textbf{u}=\frac{\textbf{m}}{m^{0}_{s}}$ has the following relationship with its corresponding stray field potential \cite{CJGC}:
$$\Delta U=div\left(\textbf{u}\chi_{\Omega(h)}\right).$$
For $\textbf{u}\in H^{1}(\Omega(h))$, $U\in\dot{H}^{1}(\mathbb{R}^{3})$, in weak form this equation reads
\begin{equation}\label{1.2}
\int_{\mathbb{R}^{3}}\nabla U\cdot\nabla\varphi dx=\int_{\Omega(h)}\textbf{u}\cdot\nabla\varphi dx,\;\forall\varphi\in C_{0}^{\infty}(\mathbb{R}^{3}).\\
\end{equation}
And the following inequality \cite{GP,LXG} holds:
\begin{equation}\label{1.3}
\|\nabla U\|_{L^{p}(\mathbb{R}^{3})}\leq C\|\textbf{u}\|_{L^{p}(\Omega(h))},\quad 1<p<\infty.\\
\end{equation}
For convenience of representation, we define the parameter symbols as follows:
$$L:=L_{1}=L_{2},\quad\mu:=\frac{3T}{5(T-T_{c})},\quad \textbf{H}:=\textbf{H}_{eff}.$$
From $a\times(b\times c)=b(a\cdot c)-c(a\cdot b)$, we have
$$\textbf{u}\times(\textbf{u}\times\textbf{H})=(\textbf{u}\cdot\textbf{H})\textbf{u}-|\textbf{u}|^{2}\textbf{H}.$$
So equation \eqref{1.1} becomes
\begin{equation}\label{1.10}
\frac{\partial\textbf{u}}{\partial t}=\gamma\textbf{u}\times\textbf{H}+L\textbf{H}=L(A\Delta\textbf{u}-\nabla U)+\gamma \textbf{u}\times(A\Delta\textbf{u}-\nabla U)-\frac{L}{\chi_{11}}\left(1+\mu|\textbf{u}|^{2}\right)\textbf{u}.
\end{equation}

We proceed to adopt the dynamic mechanism in \cite{CM}, assuming that energy  is dominated by exchange energy, and considering the case of medium time scale and small damping, that is, for some $\varepsilon>0$ and $a>0$
$$\frac{A(h)}{h}\rightarrow\varepsilon,\quad\gamma(h)\sqrt{h}\rightarrow1,\quad\frac{L(h)}{\gamma(h)h}\rightarrow a, \text{ as } h\rightarrow0.$$
The leading order energy effect of the stray field interaction in the film is a quadratic shape anisotropy conducive to in-plane magnetization, resulting in the formation of a forcing term pointing to the film plane and competing with the cyclotron force pushing the magnetization vector away from the plane \cite{CM}. Therefore, in the limit process ($h\rightarrow 0$), some stray field energy will be converted into kinetic energy, and the limit spin polarization $u$ is planar.

LLB equation is developed from LL equation and LLG equation, but its form is more complex. The nonlinear term is increased from only involving the multiplication of two terms to three terms, which increases the influence on the interaction of stray field. More importantly, the favorable condition that the magnetization is constant is lost. First, the most direct impact is that the space of $\textbf{u}_{t}$ is worse than $L^{2}$ space, and some existing methods for $L^{2}$ estimation can no longer be used. Secondly, when considering two important physical quantities in the vertical direction, if we still set $w_{1}^{h}=\frac{1}{\sqrt{h}}\fint_{0}^{h}u_{3}dx_{3}$ and $w_{2}^{h}=\frac{1}{\sqrt{h}}\fint_{0}^{h}\frac{\partial U}{\partial x_{3}}dx_{3}$, then the final limit equation will show singularity. So we have developed and improved some new methods and techniques. We add the square of spin polarization to $w_{1}^{h}$ and $w_{2}^{h}$ to avoid singularity, that is $w_{1}^{h}=\frac{1}{\sqrt{h}}\fint_{0}^{h}\left|\textbf{u}^{2}\right|u_{3}dx_{3}$, $w_{2}^{h}=\frac{1}{\sqrt{h}}\fint_{0}^{h}\left|\textbf{u}^{2}\right|\frac{\partial U}{\partial x_{3}}dx_{3}$. This improvement produces many nonlinear terms involving the multiplication of three terms, which not only requires more and more precise estimates except $L^{2}$, but also requires higher space for strong convergence. In order to solve these problems, we extend the $L^{2}-L^{2}$ estimate on the average of product integrals to $L^{p}-L^{q}$ estimate. And then we use $L^{p}-L^{q}$ estimate, the energy methods and Gagliardo-Nirenberg interpolation to obtain more precise estimates of magnetization and stray field potential. Because the strong convergence space obtained by directly using the Compact Embedding Theorem in the past is poor, we combine the weak compactness argument with the Aubin-Lions Lemma to obtain the strong convergence in the better space. Finally, we derive the limit equation of LLB equation under the condition of appropriate compactness, and  prove the limit process in a mathematically rigorous way.


Now, we formally derive the limit equation of equation \eqref{1.10} when $h\rightarrow 0$. Assume that
$$\frac{\partial\textbf{u}^{h}}{\partial t}=\gamma\textbf{u}^{h}\times\textbf{H}+L\textbf{H}=:\gamma\textbf{u}^{h}\times\bar{\textbf{H}}+L\textbf{H},$$
where $\bar{\textbf{H}}=A\Delta\textbf{u}^{h}-\nabla U $, this effective field is only composed of exchange field and stray field.
The energy of effective field $\bar{\textbf{H}}$ is
$$\bar{E}(\textbf{u}^{h})=\frac{A}{2}\int_{\Omega(h)}|\nabla\textbf{u}^{h}|^{2}dx+\frac{1}{2}\int_{\mathbb{R}^{3}}|\nabla U|^{2}dx.$$
For renormalized $\bar{\textbf{H}}^{h}=\frac{\bar{\textbf{H}}}{h}$, the renormalized LLB equation reads
\begin{align}\label{1.5}
\frac{\partial\textbf{u}^{h}}{\partial t}=\gamma h\textbf{u}^{h}\times\bar{\textbf{H}}^{h}+L\textbf{H}.
\end{align}
The renormalized stray field potential is $v^{h}=\frac{U}{h},$ then the renormalized energy is given by
$$\bar{E}_{h}(\textbf{u}^{h})=\frac{1}{h^{2}}\bar{E}(\textbf{u}^{h})=\frac{A}{2h}\fint_{\Omega(h)}|\nabla\textbf{u}^{h}|^{2}dx+\frac{1}{2}
\int_{\mathbb{R}^{3}}\left|\nabla v^{h}\right|^{2}dx.$$
We temporarily assume that when $h\rightarrow0~(x_{3}\rightarrow0)$, $\fint_{0}^{h}\textbf{u}^{h}dx_{3}:=(\fint_{0}^{h}u^{h}dx_{3},\fint_{0}^{h}u_{3}^{h}dx_{3})\rightarrow(u,0)$, $\bar{\textbf{H}}^{h}:=(\bar{H}^{h},\bar{H}_{3}^{h})\rightarrow \bar{\textbf{H}}_{0}=(\bar{H}_{0},\bar{H}_{0,3})$, the spin polarization of the limit field and its self induced magnetic field are $u$, $v$, respectively. Since the limit equations of the first component and the second component are always valid in form, we get the result by considering the third component of \eqref{1.5} in a weak sense
\begin{align*}
\int_{0}^{T}\int_{\Omega(h)}\frac{\partial u_{3}^{h}}{\partial t}\cdot\phi dxdt=\int_{0}^{T}\int_{\Omega(h)}\left[\gamma h\left(u_{1}^{h}\bar{H}^{h}_{2}-u_{2}^{h}\bar{H}^{h}_{1}\right)+LH_{3}\right]\cdot\phi dxdt,
\end{align*}
where $\phi$ is a test function. When $h\rightarrow0$, from the above formula we obtain
\begin{align*}
\int_{0}^{T}\int_{\Omega(h)}\left(u^{h}\wedge\bar{H}^{h}\right)\cdot\phi dxdt=0,
\end{align*}
where $\wedge$ represents the outer product of a two-dimensional vector.
The energy corresponding to the limiting magnetization field $\bar{\textbf{H}}_{0}$ of the effective field $\bar{\textbf{H}}^{h}$ is
$$\bar{E}_{0}(u)=\frac{\varepsilon}{2}\int_{\Omega}|\nabla'u|^{2}dx+\frac{1}{2}\int_{\mathbb{R}^{3}}|\nabla'v|^{2}dx,$$
where $\Delta v=div(u\chi_{\Omega})\otimes\delta_{\{x_{3}=0\}}.$
Because some stray field energy is converted into kinetic energy in the limit process, then the total energy of the limit field is
$$\bar{E}_{tot}(u)=\bar{E}_{0}(u)+\frac{\beta}{2}\int_{\Omega}|\omega|^{2}dx,$$
where $-\omega$ is the angular velocity, $\frac{1}{\beta}=\lim\limits_{h\rightarrow0}h[\gamma(h)]^{2}.$
Then consider the kinetic energy term of the limit field. We make a polar transformation of the vector $u$ in the plane. If $|u|=0$, then $u$ is the zero vector. And it's easy to know that the angular velocity is zero in this case and the kinetic energy is zero. So we consider the case of $|u|\neq0$. Let
$$\frac{u}{|u|}=(cos\theta,sin\theta),$$
one has
$$\partial_{t}\left(\frac{u}{|u|}\right)=\left(-sin\theta\partial_{t}\theta,cos\theta\partial_{t}\theta\right),$$
$$\frac{u}{|u|}\wedge\partial_{t}\left(\frac{u}{|u|}\right)=cos^{2}\theta\partial_{t}\theta+sin^{2}\theta\partial_{t}\theta=\partial_{t}\theta.$$
Thus the angular velocity is $-\omega=\frac{u}{|u|}\wedge\partial_{t}\left(\frac{u}{|u|}\right)$ and
$$\omega_{t}=-\partial_{t}\left(\frac{u}{|u|}\wedge\partial_{t}\left(\frac{u}{|u|}\right)\right)=-\frac{1}{|u|^{2}}u\wedge\partial_{t}^{2}u.$$
Therefore we have
\begin{align*}
\lim\limits_{h\rightarrow0}\int_{0}^{T}\int_{\Omega(h)}\left(u\wedge\bar{H}^{h}\right)\cdot\phi dxdt&=\int_{0}^{T}\int_{\Omega}\left[u\wedge\bar{H}_{0}+\beta\omega_{t}\right]\phi dxdt\\
&=\int_{0}^{T}\int_{\Omega}\left[u\wedge(\varepsilon\Delta'u-\nabla'v)-
\frac{\beta}{|u|^{2}}u\wedge\partial_{t}^{2}u\right]\phi dxdt=0,
\end{align*}
where $\Delta'$ and $\nabla'$ represent Laplace operators and gradients of two-dimensional vectors, respectively.
In order to avoid singularity, multiplying both sides of the equation by $|u|^{2}$, then the LLB limit equation formally reads
$$|u|^{2}u\wedge(\varepsilon\Delta'u-\nabla'v)-\beta u\wedge\partial_{t}^{2}u=0.$$

Under certain parameter mechanisms and assumptions, we will establish a three-dimensional model of LLB equation \eqref{1.10} on the film and study its initial boundary value problem. Thus, when the film thickness tends to zero, the limit equation is obtained and the limit process is proved in a mathematically rigorous way. The rest of the paper is arranged as follows. In Section \ref{section2}, we will prove the existence of weak solutions of equation \eqref{1.10} considering the action of stray field (see Theorem \ref{theorem 2.1}), and then obtain the necessary compactness from the energy estimates. We also study the interaction of static magnetic field (stray field) without considering the setting of time variable, and get some lemmas. Finally, we state the main results of this paper (see Theorem \ref{theorem4.1}) in Section \ref{section3}, and prove main theorem in Section \ref{section4}.

\section{Preliminaries}\label{section2}
Let's  first introduce some notations that will be used in the paper. We define the function spaces as follows:
$$W^{k,p}(\mathbb{R}^{n})(1\leq p\leq\infty)=\left\{f\in L^{p}\left(\mathbb{R}^{n}\right): \nabla^{\alpha}f\in L^{p}\left(\mathbb{R}^{n}\right),|\alpha|\leq k\right\},$$
$$H^{k}(\mathbb{R}^{n})=W^{k,2}(\mathbb{R}^{n}),$$
$$\dot{H}^{1}(\mathbb{R}^{n})=\{f\in L^{6}(\mathbb{R}^{n}):\nabla f\in L^{2}(\mathbb{R}^{n})\}.$$
Here, $L^{p}(\mathbb{R}^{n})(1\leq p\leq+\infty)$ stands for pth-power Lebesgue integrable function spaces. $H_{0}^{1}(\mathbb{R}^{n})$ is a set of functions belonging to $H^{1}(\mathbb{R}^{n})$ and having compact support on $\mathbb{R}^{n}$. $H^{-1}(\mathbb{R}^{n})$ is dual space of $H_{0}^{1}(\mathbb{R}^{n})$. In this paper, $\langle\cdot,\cdot\rangle$ denotes dual product, $\times$ represents the standard vector product on $\mathbb{R}^{3}$. And $\wedge$ represents the outer product on $\mathbb{R}^{2}$, for $p, q\in\mathbb{R}^{2},$
$$p\wedge q=p_{1}q_{2}-p_{2}q_{1}.$$
We use the standard symbols for the gradient $\nabla$ and the Laplacian $\Delta=\nabla\cdot\nabla$  acting on functions on $\mathbb{R}^{3}$. The corresponding planar operators are
$$\nabla'=\left(\frac{\partial}{\partial x_{1}},\frac{\partial}{\partial x_{2}}\right),\quad \Delta'=\nabla'\cdot\nabla'.$$
Symbol $\chi_{E}$ represents the characteristic function of measurable set $E\subset\mathbb{R}^{n}$, and the integral average is defined as
$$\fint_{E}f(y)dy=\frac{1}{|E|}\int_{E}f(y)dy.$$
Finally, we mark the vector in $\mathbb{R}^{3}$ with bold letters to distinguish it from the vector on the plane, such as $\textbf{X}=(X,X_{3})\in\mathbb{R}^{3}$, where $X\in\mathbb{R}^{2}$.

Now, we prove the existence of weak solutions of equation \eqref{1.10} with $\textbf{u}_{0}$ as initial value and the homogeneous Neumann boundary conditions on bounded open set $\mathcal{B}\subset\mathbb{R}^{3}$ with $C^{2}$ boundary.

\begin{definition}\label{definition 2.1}
Given $T>0$,  a weak solution $\textbf{u}:[0,T]\rightarrow H^{1}\cap L^{4}$ to \eqref{1.10} satisfies
\begin{align}\label{2.12}
<\textbf{u}(t),\phi>=&<\textbf{u}_{0},\phi>-LA\int_{0}^{t}<\nabla\textbf{u}(s),\nabla\phi>ds-L\int_{0}^{t}<\nabla U(s),\phi>ds\notag\\
&-\gamma A\int_{0}^{t}<\textbf{u}(s)\times\nabla\textbf{u}(s),\nabla\phi>ds-\gamma\int_{0}^{t}<\textbf{u}(s)\times\nabla U(s),\phi>ds\notag\\
&-\frac{L}{\chi_{11}}\int_{0}^{t}<(1+\mu|\textbf{u}|^{2}(s))\textbf{u}(s),\phi>ds,
\end{align}
for every $\phi\in C^{\infty}_{0}(\mathcal{B})$ and $t\in[0,T]$.
\end{definition}

The existence theorem of weak solution is given below.
\begin{theorem}\label{theorem 2.1}
Let $\mathcal{B}\subset\mathbb{R}^{3}$ be an open bounded domain with $C^{2}$ boundary, for any given $T>0$ and initial data $\textbf{u}(0)=\textbf{u}_{0}\in H^{1}$, there exists a weak solution of equation \eqref{1.10} such that $\textbf{u}\in L^{\infty}(0,T;H^{1})\cap L^{2}(0,T;H^{2})$.
\end{theorem}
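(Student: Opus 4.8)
The plan is to construct weak solutions by the Faedo-Galerkin scheme announced in the introduction, deriving $n$-uniform bounds from the dissipative energy structure of \eqref{1.10} and then passing to the limit by compactness. Let $\{e_i\}_{i\ge1}$ be the eigenfunctions of $-\Delta$ on $\mathcal{B}$ with homogeneous Neumann conditions; these form an orthogonal basis of $H^1$ and an orthonormal basis of $L^2$. Writing $V_n=\mathrm{span}\{e_1,\dots,e_n\}$ with $L^2$-projection $P_n$, I seek $\textbf{u}_n(t)=\sum_{i=1}^n g_i^n(t)e_i$ solving the projected system
\begin{equation*}
\partial_t\textbf{u}_n=P_n\Big[L(A\Delta\textbf{u}_n-\nabla U_n)+\gamma\,\textbf{u}_n\times(A\Delta\textbf{u}_n-\nabla U_n)-\tfrac{L}{\chi_{11}}(1+\mu|\textbf{u}_n|^2)\textbf{u}_n\Big],
\end{equation*}
where $U_n\in\dot H^1(\mathbb{R}^3)$ is the stray-field potential associated with $\textbf{u}_n$ through \eqref{1.2}. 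Since $U_n$ depends continuously and linearly on $\textbf{u}_n$ and the remaining nonlinearities are polynomial, the right-hand side is locally Lipschitz in $g^n=(g_1^n,\dots,g_n^n)$, so the Cauchy-Lipschitz theorem yields a unique solution on a maximal interval $[0,T_n)$ with $\textbf{u}_n(0)=P_n\textbf{u}_0$.

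The core of the argument is a pair of a priori estimates obtained by testing the system against admissible elements of $V_n$. Testing against $\textbf{u}_n$, using that $\textbf{u}_n\times\textbf{X}$ is orthogonal to $\textbf{u}_n$ together with the identity $\langle\nabla U_n,\textbf{u}_n\rangle=\|\nabla U_n\|_{L^2(\mathbb{R}^3)}^2$ (take $\varphi=U_n$ in \eqref{1.2}), and noting all surviving terms are nonpositive, produces
\begin{equation*}
\tfrac12\tfrac{d}{dt}\|\textbf{u}_n\|_{L^2}^2+LA\|\nabla\textbf{u}_n\|_{L^2}^2+L\|\nabla U_n\|_{L^2}^2+\tfrac{L}{\chi_{11}}\big(\|\textbf{u}_n\|_{L^2}^2+\mu\|\textbf{u}_n\|_{L^4}^4\big)\le0,
\end{equation*}
so $\textbf{u}_n$ is bounded in $L^\infty(0,T;L^2)\cap L^2(0,T;H^1)\cap L^4(0,T;L^4)$. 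Next, testing against $-A\Delta\textbf{u}_n\in V_n$, the worst gyroscopic contribution $\gamma A^2\langle\textbf{u}_n\times\Delta\textbf{u}_n,\Delta\textbf{u}_n\rangle$ vanishes by orthogonality, leaving
\begin{equation*}
\tfrac{A}{2}\tfrac{d}{dt}\|\nabla\textbf{u}_n\|_{L^2}^2+LA^2\|\Delta\textbf{u}_n\|_{L^2}^2=\gamma A\langle\textbf{u}_n\times\nabla U_n,\Delta\textbf{u}_n\rangle+LA\langle\nabla U_n,\Delta\textbf{u}_n\rangle+\tfrac{LA}{\chi_{11}}\langle(1+\mu|\textbf{u}_n|^2)\textbf{u}_n,\Delta\textbf{u}_n\rangle.
\end{equation*}
Here the cubic term is dissipative after integration by parts, while the two stray-field terms are controlled by \eqref{1.3}, the Sobolev embedding $H^1\hookrightarrow L^6$, and the interpolation $\|\nabla\textbf{u}_n\|_{L^2}^2\le C\|\textbf{u}_n\|_{L^2}\|\Delta\textbf{u}_n\|_{L^2}$; the resulting superlinear terms are absorbed into $LA^2\|\Delta\textbf{u}_n\|_{L^2}^2$ and Gronwall's inequality gives $\textbf{u}_n$ bounded in $L^\infty(0,T;H^1)\cap L^2(0,T;H^2)$. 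As these bounds are independent of $n$, they rule out blow-up, so $T_n=T$.

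With the uniform bounds in hand, weak-$*$ compactness yields a subsequence $\textbf{u}_n\rightharpoonup\textbf{u}$ in $L^\infty(0,T;H^1)$ and weakly in $L^2(0,T;H^2)$. Reading $\partial_t\textbf{u}_n$ off the equation and using $\|\textbf{u}_n\times\Delta\textbf{u}_n\|_{L^{3/2}}\le\|\textbf{u}_n\|_{L^6}\|\Delta\textbf{u}_n\|_{L^2}$ together with the preceding bounds shows $\partial_t\textbf{u}_n$ is bounded in $L^2(0,T;(H^1)')$. The Aubin-Lions lemma applied to $H^2\hookrightarrow\hookrightarrow H^1\hookrightarrow(H^1)'$ then gives strong convergence $\textbf{u}_n\to\textbf{u}$ in $L^2(0,T;H^1)$, which is precisely the compactness needed to pass to the limit in the quadratic term $\textbf{u}_n\times\nabla\textbf{u}_n$ (after integrating by parts the exchange contribution, as in \eqref{2.12}), in the nonlocal term $\textbf{u}_n\times\nabla U_n$ (using continuous dependence of $U_n$ on $\textbf{u}_n$ via \eqref{1.2} and \eqref{1.3}), and in the cubic term $|\textbf{u}_n|^2\textbf{u}_n$ (via strong $L^2(0,T;L^6)$ convergence and the uniform $L^\infty(0,T;L^2)$ bound on $|\textbf{u}_n|^2\textbf{u}_n$). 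Letting $n\to\infty$ in the projected weak formulation and using $P_n\textbf{u}_0\to\textbf{u}_0$ identifies $\textbf{u}$ as a weak solution in the sense of Definition \ref{definition 2.1} with the claimed regularity.

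I expect the main obstacle to be the higher-order estimate and the ensuing limit passage rather than the existence of the Galerkin solutions. Because the LLB dynamics does not preserve $|\textbf{u}|$, the time derivative $\partial_t\textbf{u}$ lives in a space strictly worse than $L^2$, so the second estimate must simultaneously extract the $L^2(0,T;H^2)$ bound and control the genuinely superlinear couplings among the gyroscopic term $\textbf{u}\times\Delta\textbf{u}$, the cubic self-interaction, and the nonlocal stray field; this is where the orthogonality cancellation and the Gagliardo-Nirenberg absorption are essential. Correspondingly, plain weak convergence is insufficient for the nonlinear terms, and the crux is upgrading to strong $L^2(0,T;H^1)$ convergence through the $\partial_t\textbf{u}_n$ bound and the Aubin-Lions lemma.
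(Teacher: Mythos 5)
Your proposal is correct and follows essentially the same route as the paper: a Faedo--Galerkin scheme, the same two energy estimates (the $L^2$ balance giving $L^\infty_tL^2_x\cap L^2_tH^1_x\cap L^4_{t,x}$ and the $-\Delta\textbf{u}_n$ test giving $L^\infty_tH^1_x\cap L^2_tH^2_x$), and a compactness passage to the limit; the paper itself only sketches this, deferring to Le's work and detailing just the new stray-field terms. The only cosmetic difference is in the limit passage, where you use a $\partial_t\textbf{u}_n$ bound in $L^2(0,T;(H^1)')$ plus Aubin--Lions, while the paper works with weak limits in $L^2(0,T;L^{3/2})$ and $X^{-\beta}$ together with strong $L^4(0,T;L^4)$ convergence --- the same compactness content.
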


\begin{proof}
We use the Faedo-Galerkin approximation method to prove Theorem \ref{theorem 2.1}. Compared to \cite{KNL}, we consider an equation with the addition of the stray field. The method is the same, here we only give a brief proof.

According to \cite{KNL}, we find that approximate solution $\textbf{u}_{n}\in S_{n}$ and $\Delta\textbf{u}_{n}\in S_{n}$. The induced stray field potential is $U_{n}$ and $\nabla U_{n}\in S_{n}$ by \eqref{1.3}. Then we get
\begin{align}\label{2.10}
\begin{cases}
\frac{\partial\textbf{u}_{n}}{\partial t}-LA\Delta\textbf{u}_{n}+L\nabla U_{n}-\gamma A\Pi_{n}(\textbf{u}_{n}\times\Delta\textbf{u}_{n})+\gamma\Pi_{n}(\textbf{u}_{n}\times\nabla U_{n})+\frac{L}{\chi_{11}}\Pi_{n}\left(\left(1+\mu|\textbf{u}_{n}|^{2}\right)\textbf{u}_{n}\right)=0,\\
\textbf{u}_{n}(x,0)=\textbf{u}_{0n},\quad\textbf{u}_{0n}\in S_{n},
\end{cases}
\end{align}
where $\textbf{u}_{0n}$ is the approximation of $\textbf{u}_{0}$.

In the following, we only consider the items related to the stray field, and see \cite{KNL} for details of the proof of other items.
For any $\textbf{u}_{1}$, $\textbf{u}_{2}\in S_{n}$, using the fact that the induced stray fields $\nabla U_{1}$, $\nabla U_{2}\in S_{n}$, and  \eqref{1.3}, we get
\begin{align*}
\|\nabla U_{1}-\nabla U_{2}\|_{L^{2}}&=\|\nabla(U_{1}-U_{2})\|_{L^{2}}\leq C\|\textbf{u}_{1}-\textbf{u}_{2}\|_{L^{2}},
\end{align*}
and
\begin{align*}
\|\Pi_{n}(\textbf{u}_{1}\times\nabla U_{1})\!-\!\Pi_{n}(\textbf{u}_{2}\times\nabla U_{2})\|_{L^{2}}&=\|\Pi_{n}(\textbf{u}_{1}\times\nabla U_{1}\!-\!\textbf{u}_{2}\times\nabla U_{2})\|_{L^{2}}\leq\|\textbf{u}_{1}\times\nabla U_{1}\!-\!\textbf{u}_{2}\times\nabla U_{2}\|_{L^{2}}\\
&\leq\|\textbf{u}_{1}\times(\nabla U_{1}-\nabla U_{2})\|_{L^{2}}+\|(\textbf{u}_{1}-\textbf{u}_{2})\times\nabla U_{2}\|_{L^{2}}\\
&\leq\|\textbf{u}_{1}\|_{L^{\infty}}\|\nabla U_{1}-\nabla U_{2}\|_{L^{2}}+\|\textbf{u}_{1}-\textbf{u}_{2}\|_{L^{2}}\|\nabla U_{2}\|_{L^{\infty}}\\
&\leq C\|\textbf{u}_{1}\|_{L^{\infty}}\|\textbf{u}_{1}-\textbf{u}_{2}\|_{L^{2}}+\|\textbf{u}_{1}-\textbf{u}_{2}\|_{L^{2}}\|\nabla U_{2}\|_{L^{\infty}}.
\end{align*}
By using the existence theorem of solutions of the ordinary differential equations, we can obtain the existence of approximate solutions. From \eqref{2.10},
we can easily establish the following estimates:
\begin{align}\label{2.20}
\|\textbf{u}_{n}(t)\|_{L^{2}}^{2}+2L\!\!\int_{0}^{T}\!\!\!\!A\|\nabla\textbf{u}_{n}(t)\|_{L^{2}}^{2}+\|\nabla U_{n}(t)\|_{L^{2}}^{2}dt
+\frac{2L}{\chi_{11}}\!\!\int_{0}^{T}\!\!\!\!\left(\|\textbf{u}_{n}(t)\|_{L^{2}}^{2}+\mu\|\textbf{u}_{n}(t)\|_{L^{4}}^{4}\right)dt\!
\leq\!\|\textbf{u}_{n}(0)\|_{L^{2}}^{2},
\end{align}
and
\begin{align}\label{2.21}
\|\nabla\textbf{u}_{n}(t)\|_{L^{2}}^{2}+2LA\int_{0}^{T}\|\Delta\textbf{u}_{n}(t)\|_{L^{2}}^{2}dt\leq\|\nabla\textbf{u}_{n}(0)\|_{L^{2}}^{2},
\end{align}
for $\forall n\in\mathbb{N}$, $t\in[0,T]$.

Finally, we consider the limit of the stray field term $\langle\nabla U_{n},\phi\rangle$  and $\langle\Pi_{n}(\textbf{u}_{n}\times\nabla U_{n}),\phi\rangle$.
From \cite{KNL}, there exist a subsequence of $\left\{\textbf{u}_{n}\right\}$ (still denoted by $\left\{\textbf{u}_{n}\right\}$) and $\textbf{u}\in L^{\infty}(0,T;H^{1})\cap L^{2}(0,T;H^{2})$ such that
$$\textbf{u}_{n}\rightharpoonup\textbf{u} \text{ in } L^{2}(0,T;H^{1}),$$
which together with \eqref{1.2} yields that
\begin{align}\label{2.5}
\lim\limits_{n\rightarrow\infty}\int_{0}^{T}\langle\nabla U_{n},\phi\rangle dt=\lim\limits_{n\rightarrow\infty}\int_{0}^{T}\langle\textbf{u}_{n},\phi\rangle dt=\int_{0}^{T}\langle\textbf{u},\phi\rangle dt=\int_{0}^{T}\langle\nabla U,\phi\rangle dt.
\end{align}

According to H\"{o}lder's inequality, \eqref{2.20}, and \eqref{2.21}, we have
$$\int_{0}^{T}\|\textbf{u}_{n}(t)\times\nabla U_{n}(t)\|_{L^{\frac{3}{2}}}^{2}dt\leq C\sup\limits_{t\in[0,T]}\|\textbf{u}_{n}\|_{H^{1}}^{2}\int_{0}^{T}\|\nabla U_{n}\|_{L^{2}}^{2}dt\leq C.$$
Then there exist a subsequence of $\{\textbf{u}_{n}\times\nabla U_{n}\}$ (still denoted by $\{\textbf{u}_{n}\times\nabla U_{n}\}$) and $Z\in L^{2}(0,T;L^{\frac{3}{2}})$, such that
$$\textbf{u}_{n}\times\nabla U_{n}\rightharpoonup Z \text{ in } L^{2}(0,T;L^{\frac{3}{2}}).$$
Further, we obtain
\begin{align*}
\int_{0}^{T}\|\Pi_{n}(\textbf{u}_{n}\times\nabla U_{n})\|_{X^{-\beta}}^{2}dt\leq\int_{0}^{T}\|\textbf{u}_{n}\times\nabla U_{n}\|^{2}_{X^{-\beta}}dt\leq C\int_{0}^{T}\|\textbf{u}_{n}\times\nabla U_{n}\|^{2}_{L^{\frac{3}{2}}}dt\leq C.
\end{align*}
There exist a subsequence of $\{\Pi_{n}(\textbf{u}_{n}\times\nabla U_{n})\}$ (still denoted by $\{\Pi_{n}(\textbf{u}_{n}\times\nabla U_{n})\}$) and $\bar{Z}\in L^{2}(0,T;X^{-\beta})$, such that
$$\Pi_{n}(\textbf{u}_{n}\times\nabla U_{n})\rightharpoonup \bar{Z} \text{ in } L^{2}(0,T;X^{-\beta}).$$
It follows from \cite[Lemma 4.2]{KNL} that $Z=\bar{Z}$ in $L^{2}(0,T;X^{-\beta})$.
Then for any $\phi\in L^{4}(0,T;L^{4})\cap L^{2}(0,T;X^{\beta})$, we have
\begin{align*}
\lim\limits_{n\rightarrow\infty}\int_{0}^{T}\langle\Pi_{n}(\textbf{u}_{n}\times\nabla U_{n}),\phi\rangle dt=\lim\limits_{n\rightarrow\infty}\int_{0}^{T}\langle\textbf{u}_{n}\times\nabla U_{n},\phi\rangle dt.
\end{align*}
Applying H\"{o}lder's inequality, we find that
\begin{align*}
&\quad\left|\int_{0}^{T}\langle\textbf{u}_{n}\times\nabla U_{n},\phi\rangle dt-\int_{0}^{T}\langle\textbf{u}\times\nabla U,\phi\rangle dt\right|\\
&\leq\left|\int_{0}^{T}\langle(\textbf{u}_{n}-\textbf{u})\times\nabla U_{n},\phi\rangle dt\right|+\left|\int_{0}^{T}\langle\textbf{u}\times(\nabla U_{n}-\nabla U),\phi\rangle dt\right|\\
&\leq\|\textbf{u}_{n}-\textbf{u}\|_{L^{4}(0,T;L^{4})}\|\nabla U_{n}\|_{L^{2}(0,T;L^{2})}\|\nabla\phi\|_{L^{4}(0,T;L^{4})}+\left|\int_{0}^{T}\langle\nabla U_{n}-\nabla U,\phi\times\textbf{u}\rangle dt\right|\\
&\leq C\|\textbf{u}_{n}-\textbf{u}\|_{L^{4}(0,T;L^{4})}+\left|\int_{0}^{T}\langle\nabla U_{n}-\nabla U,\phi\times\textbf{u}\rangle dt\right|.
\end{align*}
From the basic energy estimates, we can infer that $\textbf{u}_{n}\rightarrow\textbf{u}\text{ in } L^{4}(0,T;L^{4})$
and $\textbf{u}\in L^{4}(0,T;L^{4})$.
Thus, we obtain
$$\lim\limits_{n\rightarrow\infty}\int_{0}^{T}\langle\Pi_{n}(\textbf{u}_{n}\times\nabla U_{n}),\phi\rangle dt=\int_{0}^{T}\langle\textbf{u}\times\nabla U,\phi\rangle dt.$$
Combining \eqref{2.5} and the above arguments, we can get the desired result.
\end{proof}
Next, we establish the energy estimates of equation \eqref{1.10}. Let $\textbf{u}^{h}$ be a family of weak solutions of LLB equation \eqref{1.10} on $\Omega(h)\times(0,T)$, and $U^{h}$ is the corresponding stray field potential, then there are
\begin{align}\label{6.8}
\frac{\partial\textbf{u}^{h}}{\partial t}=\gamma A\textbf{u}^{h}\times\Delta\textbf{u}^{h}-\gamma\textbf{u}^{h}\times\nabla U^{h}+LA\Delta\textbf{u}^{h}-\frac{L}{\chi_{11}}\textbf{u}^{h}-\frac{L\mu}{\chi_{11}}\left|\textbf{u}^{h}\right|^{2}\textbf{u}^{h}-L\nabla U^{h},
\end{align}
and
$$\textbf{u}^{h}\in L^{\infty}(0,T;H^{1}(\Omega(h)))\cap L^{2}(0,T;H^{2}(\Omega(h))).$$
It follows from \eqref{1.3}  that
$$U^{h}\in L^{\infty}(0,T;\dot{H}^{1}(\mathbb{R}^{3})).$$
By using the above estimates and \eqref{6.8}, we can conclude that $\textbf{u}_{t}^{h}\in L^{2}(0,T;L^{\frac{3}{2}}(\Omega(h))).$

In order to facilitate the proof of the later limit equation, we define the renormalized stray field potential $v^{h}=\frac{U^{h}}{h}.$

Some lemmas about energy estimates are given below.
\begin{lemma}\label{6.10}
If $\sup\limits_{h}\frac{1}{h^{2}}\fint_{\Omega(h)}\left|\textbf{u}^{h}(0)\right|^{2}dx<\infty$, when $h\rightarrow0$, $\gamma(h)\sqrt{h}\rightarrow1$, $\frac{L(h)}{\gamma(h)h}\rightarrow a$, then
\begin{align*}
\sup\limits_{h,t}\fint_{\Omega(h)}\left|\textbf{u}^{h}\right|^{2}dx<\infty,\; \sup\limits_{h}\int_{0}^{T}\!\!\!\fint_{\Omega(h)}\left|\textbf{u}^{h}\right|^{4}dxdt<\infty, \; \sup\limits_{h,t}\frac{1}{h}\int_{\mathbb{R}^{3}}\left|\nabla v^{h}\right|^{2}dxdt<\infty.
\end{align*}
\end{lemma}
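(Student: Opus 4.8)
The plan is to reduce all three bounds to the single $L^{2}$-energy inequality already obtained at the Galerkin level in \eqref{2.20}, rewritten for the weak solution $\textbf{u}^{h}$ of \eqref{6.8} on $\Omega(h)$, and then to keep careful track of the powers of $h$ forced by the scaling laws $\gamma(h)\sqrt{h}\to1$ and $L(h)/(\gamma(h)h)\to a$. The point is that, aside from bookkeeping, no new estimate beyond \eqref{1.3} and \eqref{2.20} is needed.

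First I would record the basic energy inequality. Formally testing \eqref{6.8} with $\textbf{u}^{h}$, the gyroscopic terms $\gamma A\,\textbf{u}^{h}\times\Delta\textbf{u}^{h}$ and $-\gamma\,\textbf{u}^{h}\times\nabla U^{h}$ drop out by orthogonality to $\textbf{u}^{h}$; integration by parts under the homogeneous Neumann condition turns $LA\langle\Delta\textbf{u}^{h},\textbf{u}^{h}\rangle$ into $-LA\|\nabla\textbf{u}^{h}\|_{L^{2}(\Omega(h))}^{2}$; and choosing $\varphi=U^{h}$ in \eqref{1.2} produces the crucial sign $\langle\textbf{u}^{h},\nabla U^{h}\rangle_{\Omega(h)}=\|\nabla U^{h}\|_{L^{2}(\mathbb{R}^{3})}^{2}\ge0$. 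Since $\textbf{u}^{h}_{t}$ only lies in $L^{2}(0,T;L^{\frac32})$, I would not test the limit equation directly but rather inherit the estimate from the Galerkin approximations by weak lower semicontinuity of the norms, obtaining for the weak solution
\begin{align*}
\|\textbf{u}^{h}(t)\|_{L^{2}(\Omega(h))}^{2}
&+2LA\int_{0}^{t}\|\nabla\textbf{u}^{h}\|_{L^{2}(\Omega(h))}^{2}\,ds
+2L\int_{0}^{t}\|\nabla U^{h}\|_{L^{2}(\mathbb{R}^{3})}^{2}\,ds\\
&+\frac{2L}{\chi_{11}}\int_{0}^{t}\Big(\|\textbf{u}^{h}\|_{L^{2}(\Omega(h))}^{2}+\mu\|\textbf{u}^{h}\|_{L^{4}(\Omega(h))}^{4}\Big)\,ds
\le\|\textbf{u}^{h}(0)\|_{L^{2}(\Omega(h))}^{2}.
\end{align*}

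From this inequality the first and third assertions are immediate. Discarding every integral term gives $\|\textbf{u}^{h}(t)\|_{L^{2}(\Omega(h))}^{2}\le\|\textbf{u}^{h}(0)\|_{L^{2}(\Omega(h))}^{2}$; dividing by $|\Omega(h)|=h|\Omega|$ yields $\fint_{\Omega(h)}|\textbf{u}^{h}(t)|^{2}\,dx\le\fint_{\Omega(h)}|\textbf{u}^{h}(0)|^{2}\,dx\le Ch^{2}$ by the hypothesis, uniformly in $h$ and $t$, which is the first claim. For the stray field I would apply \eqref{1.3} with $p=2$ pointwise in time, $\|\nabla U^{h}(t)\|_{L^{2}(\mathbb{R}^{3})}^{2}\le C\|\textbf{u}^{h}(t)\|_{L^{2}(\Omega(h))}^{2}$; since $v^{h}=U^{h}/h$ this gives $\frac1h\int_{\mathbb{R}^{3}}|\nabla v^{h}|^{2}\,dx=\frac1{h^{3}}\int_{\mathbb{R}^{3}}|\nabla U^{h}|^{2}\,dx\le\frac{C}{h^{3}}\|\textbf{u}^{h}(0)\|_{L^{2}(\Omega(h))}^{2}=\frac{C|\Omega|}{h^{2}}\fint_{\Omega(h)}|\textbf{u}^{h}(0)|^{2}\,dx$, again bounded by the hypothesis (the written $dt$ in the statement being superfluous, the estimate holding for each $t$).

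The second bound is the only genuinely delicate point, since the coefficient $L=L(h)$ degenerates as $h\to0$. Retaining only the quartic term gives $\int_{0}^{T}\|\textbf{u}^{h}\|_{L^{4}(\Omega(h))}^{4}\,dt\le\frac{\chi_{11}}{2L\mu}\|\textbf{u}^{h}(0)\|_{L^{2}(\Omega(h))}^{2}$, so after dividing by $|\Omega(h)|=h|\Omega|$ and invoking the hypothesis,
$$
\int_{0}^{T}\fint_{\Omega(h)}|\textbf{u}^{h}|^{4}\,dx\,dt
\le\frac{\chi_{11}}{2L(h)\mu}\,\fint_{\Omega(h)}|\textbf{u}^{h}(0)|^{2}\,dx
\le\frac{C\,h^{2}}{L(h)}.
$$
The scaling laws give $\gamma(h)\sim h^{-1/2}$ and hence $L(h)\sim a\,\gamma(h)h\sim a\,h^{1/2}$, so $h^{2}/L(h)\sim h^{3/2}/a\to0$; in particular the right-hand side stays bounded (indeed vanishes) as $h\to0$, which proves the second claim. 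I expect the main obstacle to be exactly this: verifying that the seemingly dangerous factor $1/L(h)$, which diverges like $h^{-1/2}$, is more than compensated by the $h^{2}$ gained from the initial-data normalization, together with the rigorous justification of the energy inequality for the weak solution via the Galerkin scheme rather than by direct testing.
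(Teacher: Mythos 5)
Your proposal is correct and follows essentially the same route as the paper: both inherit the $L^{2}$ energy inequality \eqref{2.20} from the Galerkin level via weak lower semicontinuity, then read off the three bounds by dividing by $|\Omega(h)|=h|\Omega|$, applying \eqref{1.3} with $p=2$ for the stray field, and using $L(h)/\sqrt{h}\to a$ to absorb the degenerating coefficient in front of the quartic term. Your observations that the $dt$ in the third bound is superfluous and that the $L^{4}$ bound in fact vanishes like $h^{3/2}$ are both accurate refinements of the paper's argument.
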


\begin{proof}
Taking the limit on both sides of \eqref{2.20} and using the weak lower semicontinuity of norm, we obtain
\begin{align}\label{6.1}
&\fint_{\Omega(h)}\left|\textbf{u}^{h}\right|^{2}dx+2LA\int_{0}^{T}\!\!\!\fint_{\Omega(h)}
\left|\nabla\textbf{u}^{h}\right|^{2}dxdt+2\frac{L}{\chi_{11}}\int_{0}^{T}\!\!\!\fint_{\Omega(h)}\left|\textbf{u}^{h}\right|^{2}dxdt+2\frac{L\mu}{\chi_{11}}\int_{0}^{T}\!\!\!
\fint_{\Omega(h)}\left|\textbf{u}^{h}\right|^{4}dxdt\notag\\
&+2Lh\int_{0}^{T}\!\!\!\int_{\mathbb{R}^{3}}\left|\nabla v^{h}\right|^{2}dxdt\leq\fint_{\Omega(h)}\left|\textbf{u}^{h}(0)\right|^{2}dx.
\end{align}
Then we have
$$\sup\limits_{h,t}\fint_{\Omega(h)}\left|\textbf{u}^{h}\right|^{2}dx\leq\sup\limits_{h}\fint_{\Omega(h)}\left|\textbf{u}^{h}(0)\right|^{2}dx
\leq\sup\limits_{h}\frac{1}{h^{2}}\fint_{\Omega(h)}\left|\textbf{u}^{h}(0)\right|^{2}dx<\infty,$$
and
$$2\frac{L}{\sqrt{h}}\frac{\mu}{\chi_{11}}\int_{0}^{T}\!\!\!\fint_{\Omega(h)}\left|\textbf{u}^{h}\right|^{4}dxdt\leq\frac{1}{\sqrt{h}}
\fint_{\Omega(h)}\left|\textbf{u}^{h}(0)\right|^{2}dx\leq\frac{1}{h^{2}}\fint_{\Omega(h)}\left|\textbf{u}^{h}(0)\right|^{2}dx<\infty.$$
i.e.,
$$\sup\limits_{h}\int_{0}^{T}\!\!\!\fint_{\Omega(h)}\left|\textbf{u}^{h}\right|^{4}dxdt\lesssim\sup\limits_{h}\frac{1}{h^{2}}\fint_{\Omega(h)}\left
|\textbf{u}^{h}(0)\right|^{2}dx<\infty.$$
We obtain from \eqref{1.3} that
$$\sup\limits_{h,t}\frac{1}{h}\int_{\mathbb{R}^{3}}\left|\nabla v^{h}\right|^{2}dx\lesssim\sup\limits_{h,t}\frac{1}{h^{2}}\fint_{\Omega(h)}\left|\textbf{u}^{h}\right|^{2}dx,$$
which combined with \eqref{6.1} yields that
$$\sup\limits_{h,t}\frac{1}{h}\int_{\mathbb{R}^{3}}\left|\nabla v^{h}\right|^{2}dxdt\lesssim\sup\limits_{h}\frac{1}{h^{2}}\fint_{\Omega(h)}\left|\textbf{u}^{h}(0)\right|^{2}dx<\infty.$$
\end{proof}

\begin{lemma}\label{6.6}
If $\sup\limits_{h}\frac{1}{h\sqrt{h}}\fint_{\Omega(h)}\left|\nabla\textbf{u}^{h}(0)\right|^{2}dx<\infty$, when $h\rightarrow0$, $\frac{A(h)}{h}\rightarrow\varepsilon$, $\gamma(h)\sqrt{h}\rightarrow1$, $\frac{L(h)}{\gamma(h)h}\rightarrow a$, then
\begin{align*}
\sup\limits_{h,t}\frac{1}{h\sqrt{h}}\fint_{\Omega(h)}\left|\nabla\textbf{u}^{h}\right|^{2}dx<\infty,\; \sup\limits_{h}\int_{0}^{T}\fint_{\Omega(h)}\left|\Delta\textbf{u}^{h}\right|^{2}dxdt<\infty,\; \sup\limits_{h,t}\fint_{\Omega(h)}\left|\textbf{u}^{h}\right|^{6}dx<\infty,
\end{align*}
\begin{align*}
\sup\limits_{h}\int_{0}^{T}\left(\fint_{\Omega(h)}\left|\nabla\textbf{u}^{h}\right|^{6}dx\right)^{\frac{1}{3}}dt<\infty,\; \sup\limits_{h}\int_{0}^{T}\left(\fint_{\Omega(h)}\left|\nabla\textbf{u}^{h}\right|^{3}dx\right)^{\frac{2}{3}}dt<\infty.
\end{align*}
\end{lemma}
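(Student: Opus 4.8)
The plan is to establish these second-order (that is, $\dot H^1$-level) bounds by the energy method, testing equation \eqref{6.8} with $-\Delta\textbf{u}^h$ and integrating over $\Omega(h)$, in the same spirit as the zeroth-order bounds of Lemma \ref{6.10} were obtained by passing to the limit in \eqref{2.20}. The gyromagnetic term $\gamma A\,\textbf{u}^h\times\Delta\textbf{u}^h$ is orthogonal to $\Delta\textbf{u}^h$ and drops out. After integration by parts (using the Neumann condition), the linear term $-\frac{L}{\chi_{11}}\textbf{u}^h$ contributes $-\frac{L}{\chi_{11}}\|\nabla\textbf{u}^h\|_{L^2}^2$ and the cubic term $-\frac{L\mu}{\chi_{11}}|\textbf{u}^h|^2\textbf{u}^h$ contributes $-\frac{L\mu}{\chi_{11}}\int(|\textbf{u}^h|^2|\nabla\textbf{u}^h|^2+2\sum_k|\textbf{u}^h\cdot\partial_k\textbf{u}^h|^2)$, both with the favorable sign, so both may be kept on the left-hand side; the term $LA\,\Delta\textbf{u}^h$ supplies the crucial dissipation $LA\|\Delta\textbf{u}^h\|_{L^2}^2$. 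Hence the only genuine source terms are the two stray-field contributions $\gamma\int(\textbf{u}^h\times\nabla U^h)\cdot\Delta\textbf{u}^h$ and $L\int\nabla U^h\cdot\Delta\textbf{u}^h$. I would perform this computation first at the Galerkin level, where $\Delta\textbf{u}_n$ is a legitimate test direction, and then pass to the limit by weak lower semicontinuity; this yields the first two estimates once the source terms are absorbed.

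The heart of the argument is the control of these two source terms. For the second I would use \eqref{1.3} and Young's inequality, $L\int\nabla U^h\cdot\Delta\textbf{u}^h\le \tfrac{LA}{4}\|\Delta\textbf{u}^h\|_{L^2}^2+\tfrac{CL}{A}\|\textbf{u}^h\|_{L^2}^2$, and for the first Hölder with exponents $\tfrac16,\tfrac13,\tfrac12$ together with \eqref{1.3}, $\gamma\int(\textbf{u}^h\times\nabla U^h)\cdot\Delta\textbf{u}^h\le \tfrac{LA}{4}\|\Delta\textbf{u}^h\|_{L^2}^2+\tfrac{C\gamma^2}{LA}\|\textbf{u}^h\|_{L^6}^2\|\textbf{u}^h\|_{L^3}^2$, so that the $\|\Delta\textbf{u}^h\|_{L^2}^2$ parts are swallowed by the dissipation. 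The decisive point is the bookkeeping in $h$: the inequality \eqref{6.1} actually yields the sharp bound $\fint_{\Omega(h)}|\textbf{u}^h|^2\lesssim h^2$ (stronger than the form recorded in Lemma \ref{6.10}), and under $A\sim\varepsilon h$, $\gamma\sim h^{-1/2}$, $L\sim a\sqrt h$ one checks, after dividing by the normalizing factor $h\sqrt h\,|\Omega|$, that the coefficient multiplying $\|\textbf{u}^h\|_{L^2}^2$ is $O(1)$ while the cubic stray term carries a strictly positive power of $h$. Writing $e(t)=\frac{1}{h\sqrt h}\fint_{\Omega(h)}|\nabla\textbf{u}^h|^2$ and $d(t)=\fint_{\Omega(h)}|\Delta\textbf{u}^h|^2$, the resulting differential inequality takes the schematic form $e'+c\,a\varepsilon\,d\lesssim C+h^{1/4}e^{3/2}$, whose solution stays bounded on $[0,T]$ uniformly in $h$ for $h$ small; this produces the first two estimates at once.

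The remaining three bounds follow by interpolation alone. Estimate three comes from the Gagliardo--Nirenberg/Sobolev embedding $H^1\hookrightarrow L^6$ on the thin film, combined with the first estimate and Lemma \ref{6.10} for the lower-order part. For the fourth and fifth I would interpolate the gradient: using $\|\nabla\textbf{u}^h\|_{L^6}\lesssim\|\nabla\textbf{u}^h\|_{L^2}+\|\nabla^2\textbf{u}^h\|_{L^2}$ and $\|\nabla\textbf{u}^h\|_{L^3}\lesssim\|\nabla\textbf{u}^h\|_{L^2}^{1/2}\|\nabla\textbf{u}^h\|_{L^6}^{1/2}$, together with the Neumann elliptic estimate $\|\nabla^2\textbf{u}^h\|_{L^2}\lesssim\|\Delta\textbf{u}^h\|_{L^2}+\|\nabla\textbf{u}^h\|_{L^2}$, and then integrating in time; the $\sup_t$ control of $\|\nabla\textbf{u}^h\|_{L^2}$ from estimate one and the $L^2(0,T)$ control of $\|\Delta\textbf{u}^h\|_{L^2}$ from estimate two close both time integrals after a Hölder step, all in their $h$-normalized forms.

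The main obstacle is the uniform-in-$h$ bookkeeping forced by the anisotropic thin-film geometry: every Sobolev, Gagliardo--Nirenberg and elliptic-regularity constant must be tracked with its explicit dependence on $h$ (precisely the $L^p$--$L^q$/interpolation machinery the paper develops), and one must verify that after normalization by $h\sqrt h$ the source terms either remain $O(1)$ or vanish. In particular, because the LLB dynamics is not norm-preserving the superlinear stray-field term genuinely survives the energy estimate, forcing a nonlinear rather than linear Gronwall argument; this is controllable only because its coefficient carries a positive power of $h$, and confirming both this cancellation and that the continuation argument is uniform in $h$ over the fixed interval $[0,T]$ is the technically delicate part.
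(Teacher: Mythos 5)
Your proposal is correct in substance but reaches the two key estimates by a genuinely different (and longer) route than the paper. The paper does not re-derive the $H^1$-level energy inequality at all: it simply takes the limit of the Galerkin estimate \eqref{2.21}, already recorded in the proof of Theorem \ref{theorem 2.1}, in which the gyromagnetic term has been cancelled against $\Delta\textbf{u}_n$ and no source terms appear on the right-hand side; dividing by $h\sqrt{h}$ and using $\frac{L}{\sqrt h}\frac{A}{h}\to a\varepsilon$ then gives the first two bounds immediately, with no Gronwall argument of any kind. You instead redo the testing with $-\Delta\textbf{u}^h$, keep the two stray-field contributions as genuine sources, and close via Young/H\"older plus a nonlinear differential inequality $e'+ca\varepsilon d\lesssim C+h^{1/4}e^{3/2}$ valid for $h$ small. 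Your $h$-bookkeeping checks out (the $\|\textbf{u}^h\|_{L^2}^2$ term is indeed $O(1)$ after normalization and the cubic term carries $h^{1/4}$), and your treatment is in fact more explicit than the paper's, whose \eqref{2.21} is asserted ``easily'' without showing how the stray-field terms are disposed of. What your route costs is twofold: (i) it only yields uniformity for $h\le h_0(T)$ rather than for all $h$ (harmless here, since one sends $h\to0$); and (ii) it uses the bound $\fint_{\Omega(h)}|\textbf{u}^h|^2\lesssim h^2$ from \eqref{6.1}, hence the hypothesis $\sup_h h^{-2}\fint_{\Omega(h)}|\textbf{u}^h(0)|^2<\infty$ of Lemma \ref{6.10}, which is not among the stated hypotheses of this lemma (though it is assumed wherever the lemma is invoked, e.g.\ in Lemma \ref{6.7} and Section \ref{section3}); you should state this dependence explicitly. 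The remaining three estimates are obtained exactly as in the paper — Gagliardo--Nirenberg giving $\|\textbf{u}^h\|_{L^6}\lesssim\|\nabla\textbf{u}^h\|_{L^2}$ and $\|\nabla\textbf{u}^h\|_{L^6}\lesssim\|\Delta\textbf{u}^h\|_{L^2}$, then H\"older for the $L^3$ bound — except that you route the second embedding through full elliptic $H^2$ regularity; either way, as you note, the $h$-dependence of these constants on the degenerating domain $\Omega(h)$ is the point that must be tracked, and the paper's own accounting (the factors $h^{2}$ in \eqref{6.50} and $h^{2/3}$ before \eqref{6.15}) is the model to follow.
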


\begin{proof}
Taking the limit on both sides of \eqref{2.21}, we get from the weak lower semicontinuity of norm that
\begin{align*}
&\fint_{\Omega(h)}\left|\nabla\textbf{u}^{h}\right|^{2}dx+2LA\int_{0}^{T}\fint_{\Omega(h)}\left|\Delta\textbf{u}^{h}\right|^{2}dxdt
\leq\fint_{\Omega(h)}\left|\nabla\textbf{u}^{h}(0)\right|^{2}dx.
\end{align*}
Then we have
\begin{align}\label{6.30}
\frac{1}{h\sqrt{h}}\sup\limits_{h,t}\fint_{\Omega(h)}\left|\nabla\textbf{u}^{h}\right|^{2}dx\leq\sup\limits_{h}
\frac{1}{h\sqrt{h}}\fint_{\Omega(h)}\left|\nabla\textbf{u}^{h}(0)\right|^{2}dx<\infty,
\end{align}
and
$$2\frac{L}{\sqrt{h}}\frac{A}{h}\int_{0}^{T}\fint_{\Omega(h)}\left|\Delta\textbf{u}^{h}\right|^{2}dxdt\leq\frac{1}{h\sqrt{h}}\fint_{\Omega(h)}
\left|\nabla\textbf{u}^{h}(0)\right|^{2}dx,$$
i.e.,
\begin{equation}\label{6.15}
\sup\limits_{h}\int_{0}^{T}\fint_{\Omega(h)}\left|\Delta\textbf{u}^{h}\right|^{2}dxdt\lesssim\sup\limits_{h}\frac{1}{h\sqrt{h}}\fint_{\Omega(h)}
\left|\nabla\textbf{u}^{h}(0)\right|^{2}dx<\infty.
\end{equation}
From the Gagliardo-Nirenberg (G-N) inequality, we obtain
$$\left\|\textbf{u}^{h}\right\|_{L^{6}}\lesssim\left\|\nabla\textbf{u}^{h}\right\|_{L^{2}},$$
which together with \eqref{6.30} implies  that
\begin{align}\label{6.50}
\sup\limits_{h,t}\fint_{\Omega(h)}\left|\textbf{u}^{h}\right|^{6}dx&\lesssim\frac{1}{h}\left(\sup\limits_{h,t}\int_{\Omega(h)}\left|\nabla\textbf{u}^{h}\right|^{2}dx\right)
^{3}=h^{2}\left(\sup\limits_{h,t}\fint_{\Omega(h)}\left|\nabla\textbf{u}^{h}\right|^{2}dx\right)^{3}<\infty.
\end{align}
Utilizing the G-N inequality again, we obtain
$$\left\|\nabla\textbf{u}^{h}\right\|_{L^{6}}\lesssim\left\|\Delta\textbf{u}^{h}\right\|_{L^{2}}.$$
It holds that
\begin{align*}
\int_{0}^{T}\left(\fint_{\Omega(h)}\left|\nabla\textbf{u}^{h}\right|^{6}dx\right)^{\frac{1}{3}}dt\lesssim h^{\frac{2}{3}}\int_{0}^{T}\fint_{\Omega(h)}\left|\Delta\textbf{u}^{h}\right|^{2}dxdt.
\end{align*}
Combining this above inequality and \eqref{6.15}, we have
$$\sup\limits_{h}\int_{0}^{T}\left(\fint_{\Omega(h)}\left|\nabla\textbf{u}^{h}\right|^{6}dx\right)^{\frac{1}{3}}dt\lesssim\sup\limits_{h}\int_{0}^{T}
\fint_{\Omega(h)}\left|\Delta\textbf{u}^{h}\right|^{2}dxdt<\infty.$$
By employing  H\"{o}lder's inequality, we also have
\begin{align*}
\int_{0}^{T}\left(\fint_{\Omega(h)}\left|\nabla\textbf{u}^{h}\right|^{3}dx\right)^{\frac{2}{3}}dt\lesssim\int_{0}^{T}\left(\fint_{\Omega(h)}\left|\nabla\textbf{u}^{h}\right|^{6}dx\right)^{\frac{1}{3}}dt,
\end{align*}
then
$$\sup\limits_{h}\int_{0}^{T}\left(\fint_{\Omega(h)}\left|\nabla\textbf{u}^{h}\right|^{3}dx\right)^{\frac{2}{3}}dt\lesssim\sup\limits_{h}\int_{0}^{T}\left(\fint_{\Omega(h)}
\left|\nabla\textbf{u}^{h}\right|^{6}dx\right)^{\frac{1}{3}}dt<\infty$$
holds.
\end{proof}

\begin{lemma}\label{6.7}
If $\sup\limits_{h}\frac{1}{h^{2}}\fint_{\Omega(h)}\left|\textbf{u}^{h}(0)\right|^{2}dx<\infty$ and $\sup\limits_{h}\frac{1}{h\sqrt{h}}\fint_{\Omega(h)}\left|\nabla\textbf{u}^{h}(0)\right|^{2}dx<\infty$, when $h\rightarrow0$, $\frac{A(h)}{h}\rightarrow\varepsilon$, $\gamma(h)\sqrt{h}\rightarrow1$, $\frac{L(h)}{\gamma(h)h}\rightarrow a$, then it holds that
$$\sup\limits_{h}\int_{0}^{T}\left(\fint_{\Omega(h)}\left|\textbf{u}_{t}^{h}\right|^\frac{3}{2}dx\right)^{\frac{4}{3}}dt<\infty.$$
\end{lemma}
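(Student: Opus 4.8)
The plan is to read off $\textbf{u}_t^h$ directly from the evolution equation \eqref{6.8} and estimate it term by term in the averaged $L^{3/2}(\Omega(h))$ norm. Writing $\|f\|:=\big(\fint_{\Omega(h)}|f|^{3/2}dx\big)^{2/3}$, the quantity to control is exactly $\int_0^T\|\textbf{u}_t^h\|^2\,dt$, since $\big(\fint|\textbf{u}_t^h|^{3/2}\big)^{4/3}=\|\textbf{u}_t^h\|^2$. The six summands on the right of \eqref{6.8} carry the coefficients $\gamma A$, $\gamma$, $LA$, $L/\chi_{11}$, $L\mu/\chi_{11}$ and $L$; by the triangle inequality and $(\sum_{i=1}^6 a_i)^2\le 6\sum a_i^2$ it suffices to bound $\int_0^T$ of each squared term separately, uniformly in $h$. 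The exponent $3/2$ is dictated by the gyroscopic term $\textbf{u}^h\times\Delta\textbf{u}^h$: since $\textbf{u}^h$ is controlled in $L^6$ and $\Delta\textbf{u}^h$ in $L^2$, Hölder's inequality with $\tfrac16+\tfrac12=\tfrac23$ places the product in $L^{3/2}$, and the same term forces the square in time because $\Delta\textbf{u}^h$ is only available in $L^2(0,T;L^2)$ via Lemma \ref{6.6}.

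First I would dispose of the exchange and reaction terms. For the gyroscopic term, Hölder gives $\|\textbf{u}^h\times\Delta\textbf{u}^h\|\le\big(\fint|\textbf{u}^h|^6\big)^{1/6}\big(\fint|\Delta\textbf{u}^h|^2\big)^{1/2}$, so using $\sup_{h,t}\fint|\textbf{u}^h|^6<\infty$ from Lemma \ref{6.6} the time integral of its square is $\lesssim(\gamma A)^2\int_0^T\fint|\Delta\textbf{u}^h|^2\,dxdt$, which is uniformly bounded because $\int_0^T\fint|\Delta\textbf{u}^h|^2$ is bounded (Lemma \ref{6.6}) and $(\gamma A)^2\sim\varepsilon^2 h\to0$. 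The term $LA\Delta\textbf{u}^h$ is handled the same way after the embedding $\|\Delta\textbf{u}^h\|\le\big(\fint|\Delta\textbf{u}^h|^2\big)^{1/2}$ of normalized norms, with coefficient $(LA)^2$. For the linear term $\tfrac{L}{\chi_{11}}\textbf{u}^h$ and the cubic term $\tfrac{L\mu}{\chi_{11}}|\textbf{u}^h|^2\textbf{u}^h$ I would use $\|\textbf{u}^h\|\le(\fint|\textbf{u}^h|^2)^{1/2}$ and $\||\textbf{u}^h|^2\textbf{u}^h\|=(\fint|\textbf{u}^h|^{9/2})^{2/3}\le(\fint|\textbf{u}^h|^6)^{1/2}$ respectively, both bounded uniformly by Lemmas \ref{6.10}--\ref{6.6}; since their coefficients satisfy $L^2\sim a^2 h\to0$, these contributions are $\lesssim L^2 T$ and hence uniformly bounded.

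The remaining work, and the genuinely delicate point, is the pair of stray-field terms $\gamma\textbf{u}^h\times\nabla U^h$ and $L\nabla U^h$, because the factor $\gamma\sim h^{-1/2}$ \emph{grows} as $h\to0$. The key is the normalized estimate $\big(\fint_{\Omega(h)}|\nabla U^h|^2\big)^{1/2}\lesssim h$: from $\nabla U^h=h\nabla v^h$, $\Omega(h)\subset\mathbb{R}^3$ and $|\Omega(h)|=h|\Omega|$ one gets $\fint_{\Omega(h)}|\nabla U^h|^2\le\frac{h}{|\Omega|}\int_{\mathbb{R}^3}|\nabla v^h|^2$, and Lemma \ref{6.10} bounds $\frac{1}{h}\int_{\mathbb{R}^3}|\nabla v^h|^2$, giving $\fint|\nabla U^h|^2\lesssim h^2$. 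Then Hölder as above yields $\|\textbf{u}^h\times\nabla U^h\|\le(\fint|\textbf{u}^h|^6)^{1/6}(\fint|\nabla U^h|^2)^{1/2}\lesssim h$, so $(\gamma\|\textbf{u}^h\times\nabla U^h\|)^2\sim\gamma^2 h^2\sim h\to0$ and $(L\|\nabla U^h\|)^2\lesssim L^2 h^2\to0$. Thus the $h$-smallness of the stray field exactly beats the blow-up of $\gamma$. Collecting the six uniformly bounded (indeed vanishing) contributions and integrating over $[0,T]$ gives the claim. The main obstacle is precisely this last balance: one must track the powers of $h$ in the coefficients $\gamma,\gamma A, LA, L$ against the normalized energy bounds, and in particular verify that the single growing factor $\gamma$ is controlled by the order-$h$ smallness of $\nabla U^h$ coming from the renormalization $v^h=U^h/h$.
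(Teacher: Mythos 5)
Your proposal follows essentially the same route as the paper's proof: read $\textbf{u}_{t}^{h}$ off equation \eqref{6.8} and bound each of the six terms in the normalized norm $\big(\fint_{\Omega(h)}|\cdot|^{3/2}dx\big)^{4/3}$ via H\"older and Gagliardo--Nirenberg together with the bounds of Lemma \ref{6.10} and Lemma \ref{6.6}. You additionally supply the detail the paper leaves implicit, namely the uniform-in-$h$ bookkeeping of the coefficients, and in particular the balance $\gamma^{2}\sim h^{-1}$ against $\fint_{\Omega(h)}|\nabla U^{h}|^{2}\lesssim h^{2}$ obtained from the renormalization $v^{h}=U^{h}/h$ and Lemma \ref{6.10}, which is precisely the step where the crude estimate \eqref{1.3} alone would not suffice.
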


\begin{proof}
Since $\sup\limits_{h}\int_{0}^{T}\left(\fint_{\Omega(h)}\left|\textbf{u}_{t}^{h}\right|^\frac{3}{2}dx\right)^{\frac{4}{3}}dt=\sup\limits_{h}\left\|\frac{1}{h^{\frac{2}{3}}}
\textbf{u}_{t}^{h}\right\|^{2}_{L^{2}(0,T;L^{\frac{3}{2}})}$, we divide both sides of equation \eqref{6.8} by $h^{\frac{2}{3}}$ and then obtain
$$\frac{1}{h^{\frac{2}{3}}}\textbf{u}^{h}_{t}=\frac{1}{h^{\frac{2}{3}}}\left(\gamma A\textbf{u}^{h}\times\Delta\textbf{u}^{h}-\gamma\textbf{u}^{h}\times\nabla U^{h}+LA\Delta\textbf{u}^{h}-\frac{L}{\chi_{11}}\textbf{u}^{h}-\frac{L\mu}{\chi_{11}}\left|\textbf{u}^{h}\right|^{2}\textbf{u}^{h}-L\nabla U^{h}\right).$$
Using the G-N inequality and H\"{o}lder's inequality, it is easy to verify that each term on the right-hand side of the above equation belongs to $L^{2}(0,T;L^{\frac{3}{2}})$ for any $h>0$.
\end{proof}

To study the interaction of stray field on thin magnet in simple environment, only electrostatic magnetic field needs to be considered. At the end of this section, we give some lemmas of stray field interaction without considering time variables.

Considering electrostatic magnetic field
$$\textbf{u}^{h}:~\Omega(h)\rightarrow\mathbb{R}^{3},\quad\textbf{u}^{h}\in H^{1}(\Omega(h)),$$
by H\"{o}lder's inequality and the G-N inequality, one has $\textbf{u}^{h}\in L^{p}(\Omega(h))$ for $1\leq p\leq 6$.

From the potential equation $\Delta U^{h}=div(\textbf{u}^{h}\chi_{\Omega(h)})$, its unique solution $U^{h}\in \dot{H}^{1}(\mathbb{R}^{3})$ satisfies
\begin{align}\label{3.2}
\int_{\mathbb{R}^{3}}\nabla U^{h}\cdot\nabla\phi dx=\int_{\Omega(h)}\textbf{u}^{h}\cdot\nabla\phi dx,\quad\phi\in \dot{H}^{1}(\mathbb{R}^{3}).
\end{align}
And according to \eqref{1.3}, we have
$$\left\|\nabla U^{h}\right\|_{L^{6}(\mathbb{R}^{3})}\lesssim\left\|\textbf{u}^{h}\right\|_{L^{6}(\Omega(h))}<\infty,$$
which yields that the weak form of potential equation can be also written as
\begin{align}\label{3.3}
\int_{\mathbb{R}^{3}}\nabla U^{h}\cdot\nabla\phi dx=\int_{\Omega(h)}\textbf{u}^{h}\cdot\nabla\phi dx,\quad\phi\in W^{1,\frac{6}{5}}(\mathbb{R}^{3}).
\end{align}
Noticing that $\nabla\phi\in L^{\frac{6}{5}}(\mathbb{R}^{3})$, and we can naturally obtain $\phi\in L^{2}(\mathbb{R}^{3})$ by the G-N inequality.

In order to determine the asymptotic limit, two important physical quantities in the vertical direction are defined by
$$w_{1}^{h}=\frac{1}{\sqrt{h}}\fint_{0}^{h}\left|\textbf{u}^{h}\right|^{2}u_{3}^{h}dx_{3},\quad w_{2}^{h}=\frac{1}{\sqrt{h}}\fint_{0}^{h}\left|\textbf{u}^{h}\right|^{2}\frac{\partial U^{h}}{\partial x_{3}}dx_{3}.$$
Renormalized stray field potential is given by $v^{h}=\frac{U^{h}}{h}$. The following lemma implies the $L^{\frac{6}{5}}$ estimates of $w_{1}^{h}$ and $w_{2}^{h}$.
\begin{lemma}\label{lemma3.1}
If $\sup\limits_{h}\fint_{\Omega(h)}\left|\nabla\textbf{u}^{h}\right|^{2}dx<\infty$ and $\sup\limits_{h}\int_{\mathbb{R}^{3}}\left|\nabla v^{h}\right|^{2}dxdt<\infty$ hold, then $w_{1}^{h}\in L^{\frac{6}{5}}(\Omega),\,w_{2}^{h}\in L^{\frac{6}{5}}(\Omega).$
\end{lemma}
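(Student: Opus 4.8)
The plan is to bound both quantities by a two-stage Hölder argument: first integrate out the thin variable $x_3$ over $(0,h)$, then apply Hölder in the planar variable $x'\in\Omega$, so as to reduce everything to two quantities already under control. The first is $\|\textbf{u}^{h}\|_{L^{6}(\Omega(h))}$, which the Gagliardo--Nirenberg inequality in the form $\|\textbf{u}^{h}\|_{L^{6}(\Omega(h))}\lesssim\|\nabla\textbf{u}^{h}\|_{L^{2}(\Omega(h))}$ (as used repeatedly above) combines with the first hypothesis to give $\|\textbf{u}^{h}\|_{L^{6}(\Omega(h))}\lesssim h^{1/2}$, since $\|\nabla\textbf{u}^{h}\|_{L^{2}(\Omega(h))}^{2}=h|\Omega|\fint_{\Omega(h)}|\nabla\textbf{u}^{h}|^{2}\,dx\lesssim h$. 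The second is $\|\nabla v^{h}\|_{L^{2}(\mathbb{R}^{3})}$, which is uniformly bounded by the second hypothesis.

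For $w_{1}^{h}$ I would first use the pointwise bound $\bigl||\textbf{u}^{h}|^{2}u_{3}^{h}\bigr|\le|\textbf{u}^{h}|^{3}$ and Hölder in $x_{3}$ to get
\[
|w_{1}^{h}(x')|\le\frac{1}{h^{3/2}}\int_{0}^{h}|\textbf{u}^{h}|^{3}\,dx_{3}\le\frac{1}{h}\left(\int_{0}^{h}|\textbf{u}^{h}|^{6}\,dx_{3}\right)^{1/2}.
\]
Raising to the power $6/5$, integrating over $\Omega$, and applying Hölder in $x'$ with exponents $(5/3,5/2)$ collapses the inner integral to $\|\textbf{u}^{h}\|_{L^{6}(\Omega(h))}^{6}$, so that
\[
\|w_{1}^{h}\|_{L^{6/5}(\Omega)}^{6/5}\lesssim h^{-6/5}\|\textbf{u}^{h}\|_{L^{6}(\Omega(h))}^{18/5}\lesssim h^{-6/5}h^{9/5}=h^{3/5},
\]
which is finite, hence $w_{1}^{h}\in L^{6/5}(\Omega)$.

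For $w_{2}^{h}$ I would exploit $\partial_{x_{3}}U^{h}=h\,\partial_{x_{3}}v^{h}$, which turns the prefactor into $h^{-1/2}$, and then apply the three-factor Hölder inequality in $x_{3}$ with exponents $(3,2,6)$, placing $|\textbf{u}^{h}|^{2}\in L^{3}$, $\partial_{x_{3}}v^{h}\in L^{2}$, and the constant $1\in L^{6}$, to obtain $|w_{2}^{h}(x')|\le h^{-1/3}\bigl(\int_{0}^{h}|\textbf{u}^{h}|^{6}\,dx_{3}\bigr)^{1/3}\bigl(\int_{0}^{h}|\partial_{x_{3}}v^{h}|^{2}\,dx_{3}\bigr)^{1/2}$. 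Raising to the power $6/5$, integrating over $\Omega$, and applying Hölder in $x'$ with exponents $(5/2,5/3)$ reduces the inner integrals to $\|\textbf{u}^{h}\|_{L^{6}(\Omega(h))}^{6}$ and $\|\partial_{x_{3}}v^{h}\|_{L^{2}(\Omega(h))}^{2}\le\|\nabla v^{h}\|_{L^{2}(\mathbb{R}^{3})}^{2}$, yielding
\[
\|w_{2}^{h}\|_{L^{6/5}(\Omega)}^{6/5}\lesssim h^{-2/5}\|\textbf{u}^{h}\|_{L^{6}(\Omega(h))}^{12/5}\|\nabla v^{h}\|_{L^{2}(\mathbb{R}^{3})}^{6/5}\lesssim h^{-2/5}h^{6/5}=h^{4/5},
\]
again finite, so $w_{2}^{h}\in L^{6/5}(\Omega)$.

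The routine part is the arithmetic of the exponents; the one point requiring care is the bookkeeping of the powers of $h$ through the two Hölder stages, together with using the scale-free Gagliardo--Nirenberg bound $\|\textbf{u}^{h}\|_{L^{6}}\lesssim\|\nabla\textbf{u}^{h}\|_{L^{2}}$ and the elementary observation that restricting the $\mathbb{R}^{3}$-gradient $\nabla v^{h}$ to the slab $\Omega(h)$ only decreases its $L^{2}$ norm. I note that both final estimates in fact carry a positive power of $h$, so beyond mere membership in $L^{6/5}(\Omega)$ the bounds are uniform in $h$ (indeed vanishing as $h\to0$), which is precisely the form the subsequent limit analysis will require.
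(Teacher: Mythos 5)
Your argument is correct and, for $w_{2}^{h}$, is essentially the paper's own proof written in unnormalized form: the paper's one-line estimate $\int_{\Omega}|w_{2}^{h}|^{6/5}dx\leq\bigl(\fint_{\Omega(h)}|\textbf{u}^{h}|^{6}dx\bigr)^{2/5}\bigl(\int_{\mathbb{R}^{3}}|\partial_{x_{3}}v^{h}|^{2}dx\bigr)^{3/5}$ is exactly your $h^{-2/5}\|\textbf{u}^{h}\|_{L^{6}(\Omega(h))}^{12/5}\|\partial_{x_{3}}v^{h}\|_{L^{2}}^{6/5}$. For $w_{1}^{h}$, however, you take a genuinely different and more elementary route. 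The paper keeps $u_{3}^{h}$ as a separate $L^{2}$ factor and controls it with the magnetostatic interpolation estimate of \cite[Lemma 2.1]{CM}, $\int_{\Omega(h)}|u_{3}^{h}|^{2}dx\lesssim\int_{\mathbb{R}^{3}}|\nabla U^{h}|^{2}dx+h\int_{\Omega(h)}|\nabla u_{3}^{h}|^{2}dx+h^{2}|\Omega|$, i.e.\ it extracts the needed powers of $h$ from the smallness of the \emph{out-of-plane component} enforced by the stray-field energy; this is why the second hypothesis enters the $w_{1}^{h}$ bound at all. You instead discard the vector structure via $|u_{3}^{h}|\le|\textbf{u}^{h}|$ and extract the powers of $h$ entirely from the quantitative decay $\|\textbf{u}^{h}\|_{L^{6}(\Omega(h))}\lesssim h^{1/2}$, which is \eqref{6.50} in disguise and rests on the same Gagliardo--Nirenberg step the paper already uses, so you are on equal footing there. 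Your version is shorter and avoids importing \cite[Lemma 2.1]{CM}, at the price of being less robust: if one only knew that $\fint_{\Omega(h)}|\textbf{u}^{h}|^{6}dx$ is bounded (the form in which \eqref{6.50} is quoted), your $w_{1}^{h}$ bound degenerates like $h^{-3/5}$, whereas the paper's stays uniform because the smallness is supplied by $\|u_{3}^{h}\|_{L^{2}(\Omega(h))}$. One caution on your closing remark: what the subsequent analysis actually needs is \emph{uniform boundedness} of $w_{1}^{h},w_{2}^{h}$ in $L^{6/5}$ so that nontrivial weak limits (namely $u\wedge u_{t}$ in Proposition \ref{proposition5.1}) can be extracted; if the bounds genuinely vanished as $h\to0$ the weak limits would be forced to be zero, so the positive powers of $h$ you obtain should not be advertised as the desired feature.
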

\begin{proof}
Using H\"{o}lder's inequality and \eqref{6.50}, we conclude that
\begin{align*}
\int_{\Omega}\left|w_{2}^{h}\right|^{\frac{6}{5}}dx&=\int_{\Omega}\frac{1}{h^{\frac{3}{5}}}\left|\fint_{0}^{h}\left|\textbf{u}^{h}\right|^{2}\frac{\partial U^{h}}{\partial x_{3}}dx_{3}\right|^{\frac{6}{5}}dx\leq\left(\fint_{\Omega(h)}\left|\textbf{u}^{h}\right|^{6}dx\right)^{\frac{2}{5}}\left(\int_{\mathbb{R}^{3}}\left|\frac{\partial v^{h}}{\partial x_{3}}\right|^{2}dx\right)^{\frac{3}{5}}<\infty,\quad\forall h.
\end{align*}
According to \cite[Lemma 2.1]{CM}, we have
$$\int_{\Omega(h)}\left|u_{3}^{h}\right|^{2}dx\lesssim\int_{\mathbb{R}^{3}}\left|\nabla U^{h}\right|^{2}dx+h\int_{\Omega(h)}\left|\nabla u_{3}^{h}\right|^{2}dx+h^{2}\left|\Omega\right|.$$
By H\"{o}lder's inequality and \eqref{6.50}, we obtain
\begin{align*}
\int_{\Omega}\left|w_{1}^{h}\right|^{\frac{6}{5}}dx&=\int_{\Omega}\frac{1}{h^{\frac{3}{5}}}\left|\fint_{0}^{h}\left|\textbf{u}^{h}\right|^{2}u_{3}^{h}
dx_{3}\right|^{\frac{6}{5}}dx
\leq\frac{1}{h^{\frac{8}{5}}}\left(\int_{\Omega(h)}\left|\textbf{u}^{h}\right|^{6}dx\right)^{\frac{2}{5}}\left(\int_{\Omega(h)}\left|u^{h}_{3}\right|
^{2}dx\right)^{\frac{3}{5}}\\
&\lesssim\left(\fint_{\Omega(h)}\left|\textbf{u}^{h}\right|^{6}dx\right)^{\frac{2}{5}}\left(\int_{\mathbb{R}^{3}}\left|\nabla v^{h}\right|^{2}dx+\fint_{\Omega(h)}\left|\nabla u_{3}^{h}\right|^{2}dx+\left|\Omega\right|\right)^{\frac{3}{5}}<\infty,\quad\forall h.
\end{align*}
\end{proof}

Making some changes to the methods used in Lemma 2.2 and Corollary 2.1 of \cite{CM}, we can get the following lemma.
\begin{lemma}\label{lemma3.2}
If $\sup\limits_{h}\fint_{\Omega(h)}\left|\nabla\textbf{u}^{h}\right|^{2}dx<\infty$ and $\sup\limits_{h}\int_{\mathbb{R}^{3}}\left|\nabla v^{h}\right|^{2}dxdt<\infty$ hold, then $w_{1}^{h}$ and $w_{2}^{h}$ has the same weak limit in $L^{\frac{6}{5}}(\Omega)$.
\end{lemma}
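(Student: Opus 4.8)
The plan is to prove that $w_2^h-w_1^h$ converges weakly to $0$ in $L^{\frac65}(\Omega)$. Since Lemma \ref{lemma3.1} already gives that $\{w_1^h\}$ and $\{w_2^h\}$ are bounded in the reflexive space $L^{\frac65}(\Omega)$, each admits weakly convergent subsequences, and it is enough to show that along any common subsequence the difference tends weakly to zero. Testing against an arbitrary $\psi\in C_0^\infty(\Omega)$ (which is dense in the dual $L^6(\Omega)$), this amounts to proving
\begin{align*}
\int_\Omega (w_2^h-w_1^h)\,\psi\,dx'=\frac{1}{h\sqrt h}\int_{\Omega(h)}\psi\,|\textbf{u}^h|^2\Big(\frac{\partial U^h}{\partial x_3}-u_3^h\Big)dx\longrightarrow 0\quad(h\to0).
\end{align*}

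The key step is to trade the vertical derivative $\partial U^h/\partial x_3$ for $u_3^h$ by testing the potential equation \eqref{3.3} against a function whose vertical derivative reproduces the weight $\psi|\textbf{u}^h|^2$ on the film. Concretely I would take
\begin{align*}
\phi(x',x_3)=\psi(x')\,\eta(x_3)\int_0^{\min(x_3,h)}\big|\textbf{u}^h(x',s)\big|^2\,ds\in W^{1,\frac65}(\mathbb{R}^3),
\end{align*}
where $\eta$ is a vertical cut-off with $\eta\equiv1$ on $[0,h]$, supported in $[0,h+\delta]$ with $|\eta'|\lesssim\delta^{-1}$, at an intermediate scale $h\ll\delta\ll1$. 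Because $\partial_{x_3}\phi=\psi|\textbf{u}^h|^2$ on $\Omega(h)$, inserting $\phi$ into \eqref{3.3} and rearranging yields
\begin{align*}
\int_{\Omega(h)}\psi\,|\textbf{u}^h|^2\Big(\frac{\partial U^h}{\partial x_3}-u_3^h\Big)dx=\int_{\Omega(h)}u^h\cdot\nabla'\phi\,dx-\int_{\mathbb{R}^3}\nabla'U^h\cdot\nabla'\phi\,dx-R_{\mathrm{ext}},
\end{align*}
whose right-hand side involves only the in-plane gradient $\nabla'\phi=\eta\big(\nabla'\psi\,\Phi+\psi\,\nabla'\Phi\big)$ with $\Phi=\int_0^{\min(x_3,h)}|\textbf{u}^h|^2ds$, together with a remainder $R_{\mathrm{ext}}$ supported in the cut-off slab $\Omega\times(h,h+\delta)$.

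It then remains to multiply by $(h\sqrt h)^{-1}$ and let $h\to0$. For this I would use the $L^6$ bound on $\textbf{u}^h$ from \eqref{6.50}, the hypothesis $\int_{\Omega(h)}|\nabla\textbf{u}^h|^2\,dx\lesssim h$, and the stray-field estimates $\|\nabla U^h\|_{L^2(\mathbb{R}^3)}\lesssim h$ together with $\|\nabla U^h\|_{L^6(\mathbb{R}^3)}\lesssim\|\textbf{u}^h\|_{L^6(\Omega(h))}\lesssim h^{1/6}$ furnished by \eqref{1.3}. A Hölder bookkeeping in the vertical and planar variables then shows that every term carries a strictly positive power of $h$ that beats the prefactor $(h\sqrt h)^{-1}$; for example the pieces built from $\nabla'\psi\,\Phi$ and the exterior remainder $R_{\mathrm{ext}}$ are of order $h^{1/4}$.

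The main obstacle is the term containing $\nabla'\Phi=2\int_0^{\min(x_3,h)}\textbf{u}^h\cdot\nabla'\textbf{u}^h\,ds$, i.e.\ the in-plane gradient of the weight $|\textbf{u}^h|^2$, which is precisely the new feature absent from \cite{CM}. Since only the $L^2$ bound on $\nabla\textbf{u}^h$ is available here, a naive Cauchy--Schwarz estimate fails; instead I would pair $\nabla'U^h$ in $L^6$ (via \eqref{1.3} with $p=6$) against $\textbf{u}^h\cdot\nabla'\textbf{u}^h$ in $L^{\frac65}$ and split the latter by Hölder with exponents tuned so that $\nabla\textbf{u}^h$ enters only to the first power, controlled by $\int_{\Omega(h)}|\nabla\textbf{u}^h|^2\lesssim h$, while $\textbf{u}^h$ is absorbed through its $L^6$ bound. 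A second, more technical point is the cut-off scale: cutting exactly at the film thickness leaves an $O(1)$ contribution from $\partial_{x_3}U^h$ outside the film, so $\delta$ must be chosen at an intermediate scale (for instance $\delta=\sqrt h$) in order to annihilate the exterior terms while keeping the planar corrections negligible.
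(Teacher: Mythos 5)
Your proposal follows essentially the same route as the paper: both test the weak magnetostatic equation \eqref{3.3} with a function whose vertical derivative reproduces the weight $|\textbf{u}^{h}|^{2}\varphi\,\chi_{(0,h)}$ on the film (the paper takes $\phi_{h}=h|\textbf{u}^{h}|^{2}\psi(x_{3}/h)\rho(x_{3})\varphi(x)$, you take the vertically integrated weight with a cut-off at an intermediate scale), thereby trading $\partial U^{h}/\partial x_{3}$ for $u_{3}^{h}$ up to remainders that are then controlled by H\"older, the $L^{6}$ bound \eqref{6.50} and the stray-field estimate \eqref{1.3}. The powers of $h$ you claim check out, so the argument is sound and matches the paper's proof in all essentials.
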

\begin{proof}
Extending $\varphi\in C_{0}^{\infty}(\Omega)$ to $\mathbb{R}^{3}$, we have
$$\phi_{h}(x,x_{3})=h\left|\textbf{u}^{h}\right|^{2}\psi\left(\frac{x_{3}}{h}\right)\rho(x_{3})\varphi(x),$$
where $\psi(x_{3})=\int_{0}^{x_{3}}\chi_{(0,1)}(z)dz$, $\rho\in C_{0}^{\infty}((-1,1))$ is a positive cut-off function. When $z\in(0,\frac{1}{2})$, one has $\rho(z)=1$ and $|\rho'(z)|\leq1$ for $\forall z\in\mathbb{R}$.

Through simple calculation, we can get $\phi_{h}\in W^{1,\frac{6}{5}}(\mathbb{R}^{3})$, and for $h\in (0,\frac{1}{2})$,
$$\nabla\phi_{h}=2h\textbf{u}^{h}\cdot\nabla\textbf{u}^{h}\psi\left(\frac{x_{3}}{h}\right)\rho(x_{3})\varphi(x)+\left|\textbf{u}^{h}\right|^{2}\varphi(x)\chi_{(0,h)}
(x_{3})\hat{e}_{3}+h\left|\textbf{u}^{h}\right|^{2}\psi\left(\frac{x_{3}}{h}\right)\nabla(\varphi\rho)(x).$$
Substituting $\phi=\phi_{h}$ into \eqref{3.3}, we obtain
\begin{align*}
&\left|\int_{\Omega(h)}\left|\textbf{u}^{h}\right|^{2}\left(u_{3}^{h}-\frac{\partial U^{h}}{\partial x_{3}}\right)\varphi(x)dx\right|\\
&=\left|-h\int_{\Omega(h)}2\left(\textbf{u}^{h}-\nabla U^{h}\right)\textbf{u}^{h}\cdot\nabla\textbf{u}^{h}\psi\left(\frac{x_{3}}{h}\right)\rho(x_{3})\varphi(x)+\left(\textbf{u}^{h}-\nabla U^{h}\right)\left|\textbf{u}^{h}\right|^{2}\psi\left(\frac{x_{3}}{h}\right)\nabla(\varphi\rho)(x)dx\right|\\
&\lesssim h\int_{\Omega(h)}\left|\textbf{u}^{h}\right|^{2}\left|\nabla\textbf{u}^{h}\right|dx+h\int_{\Omega(h)}\left|\nabla U^{h}\right|\cdot\left|\textbf{u}^{h}\right|\cdot\left|\nabla\textbf{u}^{h}\right|dx+h\int_{\Omega(h)}\left|\textbf{u}^{h}\right|^{3}+\left|\nabla U^{h}\right|\cdot\left|\textbf{u}^{h}\right|^{2}dx,
\end{align*}
From \eqref{1.3}, we have
$$\left\|\nabla U^{h}\right\|_{L^{4}(\mathbb{R}^{3})}\lesssim\left\|\textbf{u}^{h}\right\|_{L^{4}(\Omega(h))}.$$
We utilize H\"{o}lder's inequality to find that
\begin{align*}
\left|\int_{\Omega}\left(w_{1}^{h}-w_{2}^{h}\right)\varphi(x)dx\right|&\lesssim\frac{1}{\sqrt{h}}\int_{\Omega(h)}\left|\textbf{u}^{h}\right|^{2}\left|\nabla\textbf{u}^{h}\right|dx
+\frac{1}{\sqrt{h}}\int_{\Omega(h)}\left|\nabla U^{h}\right|\cdot\left|\textbf{u}^{h}\right|\cdot\left|\nabla\textbf{u}^{h}\right|dx\\
&\quad+\frac{1}{\sqrt{h}}\int_{\Omega(h)}\left|\textbf{u}^{h}\right|^{3}dx+\frac{1}{\sqrt{h}}\int_{\Omega(h)}\left|\nabla U^{h}\right|\cdot\left|\textbf{u}^{h}\right|^{2}dx\\
&\lesssim\sqrt{h}\left(\fint_{\Omega(h)}\left|\textbf{u}^{h}\right|^{4}dx\right)^{\frac{1}{2}}\left(\fint_{\Omega(h)}\left|\nabla\textbf{u}^{h}\right|^{2}dx\right)^{\frac{1}{2}}\\
&\quad+h^{\frac{1}{4}}\left(\int_{\Omega(h)}\left|\textbf{u}^{h}\right|^{4}dx\right)^{\frac{1}{4}}\left(\fint_{\Omega(h)}\left|\textbf{u}^{h}\right|^{4}dx\right)^{\frac{1}{4}}
\left(\fint_{\Omega(h)}\left|\nabla\textbf{u}^{h}\right|^{2}dx\right)^{\frac{1}{2}}\\
&\quad+\sqrt{h}\fint_{\Omega(h)}\left|\textbf{u}^{h}\right|^{3}dx+h\left(\int_{\mathbb{R}^{3}}\left|\nabla v^{h}\right|^{2}dx\right)^{\frac{1}{2}}\left(\fint_{\Omega(h)}\left|\textbf{u}^{h}\right|^{4}dx\right)^{\frac{1}{2}}.
\end{align*}
Using H\"{o}lder's inequality again, one has
$$\fint_{\Omega(h)}\left|\textbf{u}^{h}\right|^{4}dx\lesssim\left(\fint_{\Omega(h)}\left|\textbf{u}^{h}\right|^{6}dx\right)^{\frac{2}{3}},$$
$$\fint_{\Omega(h)}\left|\textbf{u}^{h}\right|^{3}dx\lesssim\left(\fint_{\Omega(h)}\left|\textbf{u}^{h}\right|^{6}dx\right)^{\frac{1}{2}},$$
which together with \eqref{6.50} and lemma \ref{lemma3.1} imply that when $h\rightarrow0$,
$$w_{1}^{h}\rightharpoonup w_{2}^{h} \text{ in } L^{\frac{6}{5}}(\Omega).$$
Then $w_{1}^{h}$ and $w^{h}_{2}$ have the same weak limit in $L^{\frac{6}{5}}(\Omega)$.
\end{proof}

In the following, the inequality of Lemma 2.4 in \cite{CM} about $L^{2}-L^{2}$ is extended to the case of $L^{p}-L^{q}$.
\begin{lemma}\label{lemma3.3}
If $f\in L^{p}(\Omega(h))$, $g\in L^{q}(\Omega(h))$ and $\frac{\partial f}{\partial x_{3}}\in L^{p}(\Omega(h))$, then
$$\left|\int_{\Omega}\left(\fint_{0}^{h}(f\cdot g)-\fint_{0}^{h}f\fint_{0}^{h}g\right)dx\right|\leq h\left(\fint_{\Omega(h)}\left|\frac{\partial f}{\partial x_{3}}\right|^{p}dx\right)^{\frac{1}{p}}\left(\fint_{\Omega(h)}|g|^{q}dx\right)^{\frac{1}{q}},$$
where $\frac{1}{p}+\frac{1}{q}=1$, $1<p$, $q<\infty.$
\end{lemma}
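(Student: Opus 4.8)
The plan is to read the bracketed term as a \emph{covariance} of $f$ and $g$ in the vertical variable and to control it by a one-dimensional Poincaré--Wirtinger inequality. For fixed $x'\in\Omega$ set $\bar f(x')=\fint_0^h f(x',x_3)\,dx_3$ and $\bar g(x')=\fint_0^h g(x',x_3)\,dx_3$. Since $\bar f(x')$ does not depend on $x_3$, the elementary identity
$$\fint_0^h fg\,dx_3-\bar f\,\bar g=\fint_0^h\bigl(f-\bar f\bigr)g\,dx_3$$
holds pointwise in $x'$, so after integrating over $\Omega$ and unfolding the slice average the left-hand side of the lemma reduces to a single integral of the form $\tfrac1h\int_{\Omega(h)}(f-\bar f)\,g\,dx$ (up to the normalizing factor carried by the average over $\Omega$). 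This is the conceptual heart of the argument: it trades the product-of-averages structure for the single factor $f-\bar f$, which records the oscillation of $f$ across the film and is exactly the quantity that $\partial_{x_3}f$ controls.

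Next I would establish the vertical Poincaré bound. By Fubini the hypothesis $\partial_{x_3}f\in L^p(\Omega(h))$ makes $f(x',\cdot)$ absolutely continuous for a.e.\ $x'$, so $f(x',x_3)-f(x',s)=\int_s^{x_3}\partial_{x_3}f(x',\tau)\,d\tau$; averaging in $s$ over $(0,h)$ and enlarging the domain of integration to $(0,h)$ gives
$$\bigl|f(x',x_3)-\bar f(x')\bigr|\le\int_0^h\bigl|\partial_{x_3}f(x',\tau)\bigr|\,d\tau\le h^{1/q}\Bigl(\int_0^h\bigl|\partial_{x_3}f(x',\tau)\bigr|^p\,d\tau\Bigr)^{1/p},$$
the last step being Hölder with $\tfrac1p+\tfrac1q=1$. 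Raising to the power $p$, integrating in $x_3$ over $(0,h)$ and then in $x'$ over $\Omega$, and using $1+\tfrac pq=p$, this yields the clean estimate $\|f-\bar f\|_{L^p(\Omega(h))}\le h\,\|\partial_{x_3}f\|_{L^p(\Omega(h))}$.

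Finally I would apply Hölder on $\Omega(h)$ with exponents $p$ and $q$ to $\tfrac1h\int_{\Omega(h)}(f-\bar f)g\,dx$, bounding it by $\tfrac1h\|f-\bar f\|_{L^p(\Omega(h))}\|g\|_{L^q(\Omega(h))}$, and then insert the Poincaré estimate, so that the gained factor of $h$ cancels the prefactor $\tfrac1h$. Rewriting the remaining $L^p(\Omega(h))$ and $L^q(\Omega(h))$ norms as the averaged quantities $\fint_{\Omega(h)}$ introduces factors $(h|\Omega|)^{1/p}$ and $(h|\Omega|)^{1/q}$ which, thanks to $\tfrac1p+\tfrac1q=1$, recombine into $h|\Omega|$ and leave precisely the stated factor $h$ once the outer normalization over $\Omega$ is accounted for. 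I do not expect a genuine analytic obstacle; the only thing to watch is the bookkeeping of the $\fint$ versus $\int$ normalizations and the attendant powers of $h$ and $|\Omega|$, so that all constants collapse to the single factor $h$ rather than leaving spurious multiples. Everything else---the covariance identity and the one-dimensional Poincaré--Wirtinger step---is routine.
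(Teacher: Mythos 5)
Your proof is correct and is essentially the same argument as the paper's: the covariance identity $\fint_0^h fg-\bar f\bar g=\fint_0^h(f-\bar f)g$ combined with the fundamental theorem of calculus in $x_3$ is precisely the representation the paper imports from Lemma 2.4 of the Melcher reference, and both proofs then finish with H\"older (you merely package the $x_3$-step as the vertical Poincar\'e bound $\|f-\bar f\|_{L^p(\Omega(h))}\le h\|\partial_{x_3}f\|_{L^p(\Omega(h))}$ before applying H\"older over $\Omega(h)$, whereas the paper applies H\"older to the double slice average directly). Your closing remark about the $|\Omega|$ bookkeeping is apt: both your estimate and the paper's produce an extra factor $|\Omega|$ that disappears only under the normalization $|\Omega|=1$ adopted in Section 3.
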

\begin{proof}
According to Lemma 2.4 in \cite{CM}, we have
$$\fint_{0}^{h}(f\cdot g)dx_{3}-\fint_{0}^{h}fdx_{3}\cdot\fint_{0}^{h}gdx_{3}=\fint_{0}^{h}\fint_{0}^{h}\left(\int_{z_{2}}^{z_{1}}\frac{\partial f}{\partial x_{3}}(x,y)dy\right)g(x,z_{1})dz_{1}dz_{2}.$$
Using H\"{o}lder's inequality, we calculate
\begin{align*}
&\quad\left|\int_{\Omega}\left(\fint_{0}^{h}\!(f\cdot g)dx_{3}-\fint_{0}^{h}\!fdx_{3}\cdot\fint_{0}^{h}\!gdx_{3}\right)dx\right|\!=\!\left|\int_{\Omega}\fint_{0}^{h}\!\!\fint_{0}^{h}\!\!\left(\int_{z_{2}}^{z_{1}}\!\!\frac{\partial f}{\partial x_{3}}(x,y)dy\right)g(x,z_{1})dz_{1}dz_{2}dx\right|\\
&=\left|\int_{\Omega}\fint_{0}^{h}\fint_{0}^{h}\left(\int_{0}^{h}\frac{\partial f}{\partial x_{3}}(x,y)dz_{2}\right)g(x,z_{1})dz_{1}dydx\right|\leq h\int_{\Omega}\fint_{0}^{h}\fint_{0}^{h}\left|\frac{\partial f}{\partial x_{3}}(x,y)\right|\left|g(x,z_{1})\right|dydz_{1}dx\\
&=h\int_{\Omega}\left(\fint_{0}^{h}\left|\frac{\partial f}{\partial x_{3}}(x,y)\right|dy\right)\left(\fint_{0}^{h}\left|g(x,z_{1})\right|dz_{1}\right)dx
\leq h\left(\fint_{\Omega(h)}\left|\frac{\partial f}{\partial x_{3}}\right|^{p}dx\right)^{\frac{1}{p}}\left(\fint_{\Omega(h)}\left|g\right|^{q}dx\right)^{\frac{1}{q}}.
\end{align*}
\end{proof}

Similar to the relationship between the magnetization in the limit field and its induced stray field proved by Lemma 2.3 in \cite{CM}, we can also obtain the relationship between spin polarization $\textbf{u}^{h}$ and its induced stray field potential.

\begin{lemma}\label{lemma3.4}
If
$$v^{h}\rightharpoonup v\text{ in } \dot{H}^{1}(\mathbb{R}^{3}),\;\,\fint_{0}^{h}\textbf{u}^{h}dx_{3}\rightharpoonup(u,0) \text{ in } L^{2}(\Omega),$$
while $h\int_{\Omega(h)}\left|\frac{\partial\textbf{u}^{h}}{\partial x_{3}}\right|^{2}dx\rightarrow0$, then $v$ is a weak solution of the following equation:
$$\int_{\mathbb{R}^{3}}\nabla v\cdot\nabla\varphi dx=\int_{\Omega}u(x)\nabla\varphi(x,0)dx,\ \ \forall\varphi\in C^{\infty}_{0}(\mathbb{R}^{3}).$$
\end{lemma}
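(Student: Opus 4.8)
The plan is to start from the weak formulation of the rescaled potential equation and pass to the limit $h\rightarrow0$ term by term. Since $v^{h}=U^{h}/h$, dividing \eqref{3.3} by $h$ gives, for every $\varphi\in C_{0}^{\infty}(\mathbb{R}^{3})\subset W^{1,\frac{6}{5}}(\mathbb{R}^{3})$,
$$\int_{\mathbb{R}^{3}}\nabla v^{h}\cdot\nabla\varphi\,dx=\frac{1}{h}\int_{\Omega(h)}\textbf{u}^{h}\cdot\nabla\varphi\,dx.$$
The left-hand side converges immediately: the hypothesis $v^{h}\rightharpoonup v$ in $\dot{H}^{1}(\mathbb{R}^{3})$ means $\nabla v^{h}\rightharpoonup\nabla v$ in $L^{2}(\mathbb{R}^{3})$, and $\nabla\varphi\in L^{2}(\mathbb{R}^{3})$ is a fixed test element, so the left-hand side tends to $\int_{\mathbb{R}^{3}}\nabla v\cdot\nabla\varphi\,dx$. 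The whole difficulty therefore lies in identifying the limit of the right-hand side.

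For the right-hand side I would write $\textbf{u}^{h}=\overline{\textbf{u}^{h}}+(\textbf{u}^{h}-\overline{\textbf{u}^{h}})$, where $\overline{\textbf{u}^{h}}(x):=\fint_{0}^{h}\textbf{u}^{h}(x,x_{3})\,dx_{3}$ is the vertical average, and split the integral accordingly. For the fluctuation part, the Poincar\'e--Wirtinger inequality on the interval $(0,h)$, applied for each fixed planar variable and then integrated over $\Omega$, yields
$$\int_{\Omega(h)}\left|\textbf{u}^{h}-\overline{\textbf{u}^{h}}\right|^{2}dx\lesssim h^{2}\int_{\Omega(h)}\left|\frac{\partial\textbf{u}^{h}}{\partial x_{3}}\right|^{2}dx.$$
Combining this with $\|\nabla\varphi\|_{L^{2}(\Omega(h))}\lesssim\sqrt{h}$ and the Cauchy--Schwarz inequality, the fluctuation contribution is bounded by a constant times $\frac{1}{h}\cdot\sqrt{h}\cdot\left(h^{2}\int_{\Omega(h)}|\partial\textbf{u}^{h}/\partial x_{3}|^{2}\,dx\right)^{1/2}=\left(h\int_{\Omega(h)}|\partial\textbf{u}^{h}/\partial x_{3}|^{2}\,dx\right)^{1/2}$, which vanishes as $h\rightarrow0$ precisely by the hypothesis $h\int_{\Omega(h)}|\partial\textbf{u}^{h}/\partial x_{3}|^{2}\,dx\rightarrow0$. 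This step is the heart of the argument and is where that third hypothesis is indispensable; without it one cannot replace the genuinely three-dimensional field $\textbf{u}^{h}$ (which, unlike in the $|\textbf{m}|=1$ setting of \cite{CM}, is not pointwise bounded) by its planar average.

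It then remains to pass to the limit in the averaged term, which, since $\overline{\textbf{u}^{h}}$ does not depend on $x_{3}$, equals $\int_{\Omega}\overline{\textbf{u}^{h}}(x)\cdot\left(\fint_{0}^{h}\nabla\varphi(x,x_{3})\,dx_{3}\right)dx$. Because $\varphi$ is smooth, $\fint_{0}^{h}\nabla\varphi(x,x_{3})\,dx_{3}=\nabla\varphi(x,0)+O(h)$ uniformly, and the $O(h)$ error is absorbed since $\overline{\textbf{u}^{h}}$ is bounded in $L^{2}(\Omega)$ (being weakly convergent). Testing the hypothesis $\fint_{0}^{h}\textbf{u}^{h}\,dx_{3}\rightharpoonup(u,0)$ in $L^{2}(\Omega)$ against $\nabla\varphi(\cdot,0)\in L^{2}(\Omega)$ then gives $\int_{\Omega}\overline{\textbf{u}^{h}}\cdot\nabla\varphi(x,0)\,dx\rightarrow\int_{\Omega}(u,0)\cdot\nabla\varphi(x,0)\,dx=\int_{\Omega}u\cdot\nabla'\varphi(x,0)\,dx$, the vertical coordinate contributing nothing precisely because the limit of the third-component average is zero. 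Equating the two limits yields the claimed weak equation. The only genuine obstacle is the fluctuation estimate above; the remainder is routine weak-convergence bookkeeping together with the smoothness of the test function $\varphi$.
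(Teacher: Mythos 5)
Your proof is correct, and it is essentially the standard argument that the paper itself does not write out: the paper merely states Lemma \ref{lemma3.4} and defers to Lemma 2.3 of \cite{CM}, whose proof is precisely this average-plus-fluctuation decomposition with the Poincar\'e--Wirtinger estimate in the thin direction controlling the fluctuation via the hypothesis $h\int_{\Omega(h)}|\partial\textbf{u}^{h}/\partial x_{3}|^{2}dx\rightarrow0$. Your write-up in fact supplies the details the paper omits, and all the individual steps (the $\sqrt{h}$ bound on $\|\nabla\varphi\|_{L^{2}(\Omega(h))}$, the $O(h)$ replacement of $\fint_{0}^{h}\nabla\varphi\,dx_{3}$ by $\nabla\varphi(\cdot,0)$, and the use of the weak $L^{2}(\Omega)$ convergence of the vertical average) check out.
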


Finally, we prove the regularity of the time derivative of the stray field induced by the magnetization field $\textbf{u}^{h}=\textbf{u}^{h}(t)$.
\begin{lemma}\label{lemma3.5}
If $\sup\limits_{h}\int_{0}^{\infty}\left(\fint_{\Omega(h)}\left|\textbf{u}_{t}^{h}\right|^{\frac{3}{2}}dx\right)^{\frac{4}{3}}dt<\infty$, then $v_{t}^{h}$ is uniformly bounded about $h$ in $L^{2}(\mathbb{R}^{3}\times(0,\infty))$.
\end{lemma}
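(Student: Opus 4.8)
The plan is to derive an elliptic problem for $v_t^h$ and then obtain the $L^2(\mathbb{R}^3)$ bound by duality, exploiting the thinness of $\Omega(h)$ through a \emph{sharp} trace estimate. Differentiating the weak potential equation \eqref{3.2} for $v^h=U^h/h$ in time (legitimate for a.e.\ $t$, since $\textbf{u}_t^h\in L^2(0,T;L^{3/2})$ makes the difference quotients of $v^h$ converge weakly in $\dot{H}^1$), one sees that $v_t^h$ is the weak solution of
\begin{align*}
\int_{\mathbb{R}^3}\nabla v_t^h\cdot\nabla\phi\,dx=\frac{1}{h}\int_{\Omega(h)}\textbf{u}_t^h\cdot\nabla\phi\,dx.
\end{align*}
The scaled estimate \eqref{1.3} already gives $\nabla v_t^h\in L^{3/2}(\mathbb{R}^3)$ and hence $v_t^h\in L^3(\mathbb{R}^3)$, so it is a genuine function; it therefore suffices to bound $\int_{\mathbb{R}^3}v_t^h g\,dx$ uniformly by $\|g\|_{L^2}$ for $g$ in the dense class $C_0^\infty(\mathbb{R}^3)$, which yields $v_t^h\in L^2(\mathbb{R}^3)$ together with the desired norm bound.

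For such a $g$ I would set $\phi:=(-\Delta)^{-1}g$, so that $-\Delta\phi=g$ and, by Plancherel, $\|\nabla^2\phi\|_{L^2(\mathbb{R}^3)}=\|g\|_{L^2(\mathbb{R}^3)}$, while $\nabla\phi$ is smooth with decay $|x|^{-2}$, hence lies in $L^3(\mathbb{R}^3)$. Inserting $\phi$ as a test function and integrating by parts gives
\begin{align*}
\int_{\mathbb{R}^3}v_t^h g\,dx=\int_{\mathbb{R}^3}\nabla v_t^h\cdot\nabla\phi\,dx=\frac{1}{h}\int_{\Omega(h)}\textbf{u}_t^h\cdot\nabla\phi\,dx\le\frac{1}{h}\|\textbf{u}_t^h\|_{L^{3/2}(\Omega(h))}\|\nabla\phi\|_{L^3(\Omega(h))}.
\end{align*}
Everything now reduces to the slab estimate $\|\nabla\phi\|_{L^3(\Omega(h))}\lesssim h^{1/3}\|g\|_{L^2(\mathbb{R}^3)}$. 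The naive route $\|\nabla\phi\|_{L^3(\Omega(h))}\le|\Omega(h)|^{1/6}\|\nabla\phi\|_{L^6(\mathbb{R}^3)}\lesssim h^{1/6}\|g\|_{L^2}$ produces only the power $h^{1/6}$, which is \emph{insufficient} for a uniform bound; improving $h^{1/6}$ to $h^{1/3}$ is the main obstacle.

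To gain the extra half power I would foliate $\Omega(h)=\Omega\times(0,h)$ by horizontal slices and control $\nabla\phi$ on each slice by its trace on the plane $\{x_3=\text{const}\}$. By translation invariance the trace estimate is uniform in the level: a short Fourier computation shows $\|\nabla\phi(\cdot,x_3)\|_{\dot{H}^{1/2}(\mathbb{R}^2)}\lesssim\|\nabla\phi\|_{\dot{H}^1(\mathbb{R}^3)}=\|\nabla^2\phi\|_{L^2(\mathbb{R}^3)}$ for a.e.\ $x_3$. Combining this with the two-dimensional embedding $\dot{H}^{1/2}(\mathbb{R}^2)\hookrightarrow L^4(\mathbb{R}^2)$ and Hölder's inequality on the bounded cross-section $\Omega$ yields
\begin{align*}
\|\nabla\phi\|_{L^3(\Omega(h))}^3=\int_0^h\|\nabla\phi(\cdot,x_3)\|_{L^3(\Omega)}^3\,dx_3\le h\,|\Omega|^{1/4}\sup_{x_3}\|\nabla\phi(\cdot,x_3)\|_{L^4(\mathbb{R}^2)}^3\lesssim h\,\|g\|_{L^2}^3,
\end{align*}
which is exactly the needed $h^{1/3}$.

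Assembling the three displays gives the pointwise-in-time bound $\|v_t^h(t)\|_{L^2(\mathbb{R}^3)}\lesssim h^{-2/3}\|\textbf{u}_t^h(t)\|_{L^{3/2}(\Omega(h))}$. Finally I would rewrite $\|\textbf{u}_t^h\|_{L^{3/2}(\Omega(h))}^2=(h|\Omega|)^{4/3}\big(\fint_{\Omega(h)}|\textbf{u}_t^h|^{3/2}dx\big)^{4/3}$, square the pointwise bound, and integrate in time; the factors $h^{-4/3}$ and $h^{4/3}$ cancel \emph{exactly}, leaving
\begin{align*}
\int_0^\infty\|v_t^h\|_{L^2(\mathbb{R}^3)}^2\,dt\lesssim|\Omega|^{4/3}\sup_h\int_0^\infty\Big(\fint_{\Omega(h)}|\textbf{u}_t^h|^{3/2}dx\Big)^{4/3}dt<\infty,
\end{align*}
which is the claimed uniform-in-$h$ bound. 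The delicate point throughout is that this scaling is borderline, so the slab estimate must be carried out with the optimal exponent obtained from the trace theorem rather than by crude Hölder interpolation.
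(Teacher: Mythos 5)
Your proof is correct, and its skeleton coincides with the paper's: both arguments test the (time-differentiated) potential equation against the Newton potential $\phi=(-\Delta)^{-1}g$ of an $L^{2}$ datum, apply the $L^{3/2}$--$L^{3}$ H\"older inequality on the thin slab, and then need precisely the slab bound $\fint_{\Omega(h)}|\nabla\phi|^{3}\,dx\lesssim\|g\|_{L^{2}(\mathbb{R}^{3})}^{3}$, which is exactly your $h^{1/3}$ gain since $\|\nabla\phi\|_{L^{3}(\Omega(h))}^{3}=h|\Omega|\fint_{\Omega(h)}|\nabla\phi|^{3}\,dx$. Where the two proofs genuinely differ is in how that key estimate is established. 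You obtain it globally, via Plancherel ($\|\nabla^{2}\phi\|_{L^{2}}=\|g\|_{L^{2}}$), the translation-invariant trace inequality $\|\nabla\phi(\cdot,x_{3})\|_{\dot{H}^{1/2}(\mathbb{R}^{2})}\lesssim\|\nabla^{2}\phi\|_{L^{2}(\mathbb{R}^{3})}$, and the embedding $\dot{H}^{1/2}(\mathbb{R}^{2})\hookrightarrow L^{4}(\mathbb{R}^{2})$; the paper instead bounds $\sup_{x_{3}\in[0,1]}\int_{\Omega}|\nabla\varphi|^{3}\,dx$ by $\int_{\Omega(1)}\left(|\nabla^{2}\varphi|^{2}+|\nabla\varphi|^{3}\right)dx$ (a one-dimensional embedding in the $x_{3}$ variable) and then invokes local elliptic $H^{2}$ regularity for $\Delta\varphi=\psi$ on a neighbourhood of $\Omega(1)$ to close the estimate. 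Your Fourier route is sharper and makes explicit why the exponent is borderline ($h^{1/6}$ from crude H\"older versus the needed $h^{1/3}$), and it bypasses local regularity theory entirely; the paper's route is more elementary in tools but leaves its slice-embedding step comparatively implicit. A further, more cosmetic difference: you differentiate the potential equation in time and argue pointwise in $t$ (justifying $v_{t}^{h}(t)\in\dot{H}^{1}$ through difference quotients), whereas the paper keeps $v^{h}$ itself and moves the time derivative onto a space-time test function $\psi$, concluding by density of $C_{0}^{\infty}(\mathbb{R}^{3}\times(0,\infty))$ in $L^{2}$. Both are legitimate, and your final bookkeeping --- the exact cancellation of $h^{-4/3}$ against $(h|\Omega|)^{4/3}$ --- reproduces the paper's scaling.
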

\begin{proof}
Let $\psi\in C_{0}^{\infty}(\mathbb{R}^{3}\times(0,\infty))$ and $|\psi|\geq 1$. Then the associated Newton potential $\varphi\in C^{\infty}(\mathbb{R}^{3}\times(0,\infty))$ solving $\Delta\varphi(\cdot,t)=\psi(\cdot,t)$ is compactly supported in time (see \cite{LCE}).
Since $\Delta\varphi_{t}=\psi_{t}$, we have
$$\varphi_{t}=\Delta^{-1}\psi_{t},\quad\nabla\varphi_{t}=\nabla^{-1}\psi_{t},$$
and one has $\varphi_{t}\in\dot{H}^{1}(\mathbb{R}^{3})$ by the Hardy-Littlwood-Sobolev inequality. Similarly, $\varphi\in\dot{H}^{1}(\mathbb{R}^{3})$. For $v^{h}\in L^{\infty}((0,\infty);\dot{H}^{1}(\mathbb{R}^{3}))$,
we get by \eqref{3.2} that
\begin{align*}
\int_{0}^{\infty}\left\langle v^{h},\psi_{t}\right\rangle dt
&=\int_{0}^{\infty}\left\langle v^{h},\Delta\varphi_{t}\right\rangle dt=-\int_{0}^{\infty}\left\langle\nabla v^{h},\nabla\varphi_{t}\right\rangle dt=-\frac{1}{h}\int_{0}^{\infty}\left\langle\nabla U^{h},\nabla\varphi_{t}\right\rangle dt\\
&=-\frac{1}{h}\int_{0}^{\infty}\left\langle\textbf{u}^{h},\nabla\varphi_{t}\right\rangle dt=\frac{1}{h}\int_{0}^{\infty}\!\!\left\langle\textbf{u}^{h}_{t},\nabla\varphi\right\rangle dt,
\end{align*}
then we use H\"{o}lder's inequality to obtain
$$\left|\int_{0}^{\infty}\left\langle v^{h},\psi_{t}\right\rangle dt\right|\leq\!\left[\int_{0}^{\infty}\!\!\left(\fint_{\Omega(h)}\left|\textbf{u}^{h}_{t}\right|^{\frac{3}{2}}dx\right)^{\frac{4}{3}}\!dt\!\right]^{\frac{1}{2}}
\left[\int_{0}^{\infty}\!\!\left(\fint_{\Omega(h)}\left|\nabla\varphi\right|^{3}dx\right)^{\frac{2}{3}}\!dt\!\right]^{\frac{1}{2}}.$$
And one has
\begin{align*}
\fint_{0}^{h}\int_{\Omega}|\nabla\varphi|^{3}dxdx_{3}&\leq\sup\limits_{x_{3}\in[0,1]}\int_{\Omega}|\nabla\varphi|^{3}dx\leq\int_{\Omega(1)}\left|\nabla^{2}\varphi\right|
^{2}+\left|\nabla\varphi\right|^{3}dx\\
&\leq\left\|\nabla^{2}\varphi\right\|^{2}_{L^{2}(\Omega(1))}+C\left\|\nabla\varphi\right\|^{3}_{L^{6}(\Omega(1))}.
\end{align*}
Given an open set $U\supset\Omega(1)$, then $\varphi\in\dot{H}^{1}(U)$ by the fact that $\varphi\in\dot{H}^{1}(\mathbb{R}^{3})$. And we further obtain $\varphi\in H^{1}(U)$ from H\"{o}lder's inequality.

Since $\psi\in L^{2}(U)$, $\varphi\in H^{1}(U)$ is the solution of $\Delta\varphi=\psi$, then $\varphi\in H^{2}(\Omega(1))$ and by using the regularity of $H^{2}$ of elliptic equation in \cite{LCE}, we conclude that
$$\|\varphi\|_{H^{2}(\Omega(1))}\leq C(\|\psi\|_{L^{2}(U)}+\|\varphi\|_{L^{2}(U)})\leq C\|\psi\|_{L^{2}(U)}.$$
We further obtain $\nabla\varphi\in H^{1}(\Omega(1))$. One has $\nabla\varphi\in \dot{H}^{1}(\Omega(1))$ and
$$\left\|\nabla^{2}\varphi\right\|_{L^{2}(\Omega(1))}+\|\nabla\varphi\|_{L^{6}(\Omega(1))}\leq C\|\psi\|_{L^{2}(U)}.$$
Hence, we obtain
$$\fint_{\Omega(h)}|\nabla\varphi|^{3}dx\leq C\left(\|\psi\|^{2}_{L^{2}(U)}+\|\psi\|^{3}_{L^{2}(U)}\right)\leq C\|\psi\|^{3}_{L^{2}(U)}.$$
For $\forall h$, we have
$$\left|\int_{0}^{\infty}\langle v^{h},\psi_{t}\rangle dt\right|\leq C\left[\int_{0}^{\infty}\left(\|\psi\|^{3}_{L^{2}(U)}\right)^{\frac{2}{3}}dt\right]^{\frac{1}{2}}\leq C\left[\int_{0}^{\infty}\|\psi\|^{2}_{L^{2}(U)}dt\right]^{\frac{1}{2}},$$
and
$$\int_{0}^{\infty}\langle v^{h}_{t},\psi\rangle dt\leq\left|\int_{0}^{\infty}\langle v^{h}_{t},\psi\rangle dt\right|=\left|\int_{0}^{\infty}\langle v^{h},\psi_{t}\rangle dt\right|\leq C\left[\int_{0}^{\infty}\|\psi\|^{2}_{L^{2}(U)}dt\right]^{\frac{1}{2}}.$$
As for $\psi\in C_{0}^{\infty}(\mathbb{R}^{3}\times(0,\infty))$ and $C_{0}^{\infty}(\mathbb{R}^{3}\times(0,\infty))$ is dense in $L^{2}(\mathbb{R}^{3}\times(0,\infty))$. We conclude that $v_{t}^{h}$ is the bounded linear functional in $L^{2}(\mathbb{R}^{3}\times(0,\infty))$ for $\forall h$. So the lemma is proved.
\end{proof}

\section{The main result}\label{section3}
We suppose that $\Omega(h)=\Omega\times(0,h)$ where $\Omega\in\mathbb{R}^{2}$ is a bounded open set with $C^{2}$ boundary. After rescaling space we can assume that $|\Omega|=1$.

Let
$$\textbf{u}^{h}:~\Omega(h)\times[0,T]\rightarrow\mathbb{R}^{3},$$
and
$$\textbf{u}^{h}\in L^{\infty}((0,T);H^{1}(\Omega(h)))\cap L^{2}((0,T);H^{2}(\Omega(h)))\cap \dot{H}^{1}((0,T);L^{\frac{3}{2}}(\Omega(h)))$$
be a family of weak solution of the LLB equation with the initial data $\textbf{u}_{0}$ and  Neumann boundary condition $\frac{\partial\textbf{u}^{h}}{\partial\textbf{n}}=0$:
\begin{equation}\label{4.1}
\frac{\partial\textbf{u}^{h}}{\partial t}=\gamma\textbf{u}^{h}\times\textbf{H}+L\textbf{H},
\end{equation}
where $\textbf{H}=A\Delta\textbf{u}^{h}-\frac{1}{\chi_{11}}\left(1+\mu|\textbf{u}^{h}|^{2}\right)\textbf{u}^{h}-\nabla U^{h}.$

Suppose that the spin magnetic ratio $\nu=\nu(h)$, damping parameter $L=L(h)$ and exchange constant $A=A(h)$ are all functions of $h$.  Longitudinal susceptibility $\chi_{11}$ and $\mu=\frac{3T}{5(T-T_{c})}$ are regarded as constants. The magnetostatic potential $U^{h}$ satisfies the Maxwell equation:
\begin{equation}\label{4.2}
\int_{\mathbb{R}^{3}}\nabla U^{h}\cdot\nabla\phi dx=\int_{\Omega(h)}\textbf{u}^{h}\cdot\nabla\phi dx,\quad \forall\phi\in W^{1,\frac{6}{5}}(\mathbb{R}^{3}).
\end{equation}

\begin{definition}\label{definition4.1}
If for any given finite $T>0$, $\forall\Phi\in L^{2}((0,T);H^{1}(\Omega(h)))$,
\begin{align}\label{4.3}
\int_{0}^{T}\int_{\Omega(h)}\textbf{u}_{t}^{h}\cdot\Phi dxdt=&\int_{0}^{T}\int_{\Omega(h)}\Big\{-\gamma A\left(\textbf{u}^{h}\times\nabla\textbf{u}^{h}\right)\cdot\nabla\Phi-\gamma\left(\textbf{u}^{h}\times\nabla U^{h}\right)\cdot\Phi-LA\nabla\textbf{u}^{h}\cdot\nabla\Phi\notag\\
&-\frac{L}{\chi_{11}}\textbf{u}^{h}\cdot\Phi-\frac{L\mu}{\chi_{11}}\left|\textbf{u}^{h}\right|^{2}\textbf{u}^{h}\cdot\Phi-L\nabla U^{h}\cdot\Phi\Big\}dxdt,
\end{align}
then $\textbf{u}^{h}$ is a weak solution of \eqref{4.1} subject to homogeneous Neumann boundary conditions.
\end{definition}

Assume the initial data satisfies $\sup\limits_{h}\frac{1}{h^{2}}\fint_{\Omega(h)}\left|\textbf{u}^{h}(0)\right|^{2}dx<\infty$ and $\sup\limits_{h}\frac{1}{h\sqrt{h}}\fint_{\Omega(h)}\left|\nabla\textbf{u}^{h}(0)\right|^{2}dx<\infty$, from Lemma \ref{6.10}-Lemma
\ref{6.7}, there are some estimates about $\textbf{u}^{h}$ and $v^{h}=\frac{U^{h}}{h}$:
\begin{align}\label{4.4}
&\sup\limits_{h,t}\fint_{\Omega(h)}\left|\textbf{u}^{h}\right|^{2}dx<\infty,\\
\label{4.5}
&\sup\limits_{h,t}\frac{1}{h\sqrt{h}}\fint_{\Omega(h)}\left|\nabla\textbf{u}^h\right|^{2}dx<\infty, \text{ or } \sup\limits_{h,t}\fint_{\Omega(h)}\left|\nabla\textbf{u}^h\right|^{2}dx<\infty,\\
\label{4.6}
&\sup\limits_{h}\int_{0}^{T}\fint_{\Omega(h)}\left|\textbf{u}^{h}\right|^{4}dxdt<\infty,\\
\label{4.7}
&\sup\limits_{h}\int_{0}^{T}\fint_{\Omega(h)}\left|\Delta\textbf{u}^{h}\right|^{2}dxdt<\infty,\\
\label{4.8}
&\sup\limits_{h}\int_{0}^{T}\left(\fint_{\Omega(h)}\left|\textbf{u}_{t}^{h}\right|^{\frac{3}{2}}dx\right)^{\frac{4}{3}}dt<\infty,\\
\label{4.9}
&\sup\limits_{h}\frac{1}{h}\int_{0}^{T}\int_{\mathbb{R}^{3}}\left|\nabla v^{h}\right|^{2}dxdt<\infty, \text{ or }
\sup\limits_{h}\int_{0}^{T}\int_{\mathbb{R}^{3}}\left|\nabla v^{h}\right|^{2}dxdt<\infty,\\
\label{4.14}
&\sup\limits_{h,t}\fint_{\Omega(h)}\left|\textbf{u}^{h}\right|^{6}dx<\infty,\\ \label{4.15}
&\sup\limits_{h}\int_{0}^{T}\left(\fint_{\Omega(h)}\left|\nabla\textbf{u}^{h}\right|^{6}dx\right)^{\frac{1}{3}}dt<\infty,\\
\label{4.16}
&\sup\limits_{h}\int_{0}^{T}\left(\fint_{\Omega(h)}\left|\nabla\textbf{u}^{h}\right|^{3}dx\right)^{\frac{2}{3}}dt<\infty.
\end{align}
Then lemma \ref{lemma3.1}-lemma \ref{lemma3.5} hold.

Now, we state our main results as follow.
\begin{theorem}\label{theorem4.1}
Suppose that there exist $a,~\varepsilon>0$ such that when $h\rightarrow0$,
\begin{equation}\label{4.10}
\frac{L(h)}{\gamma(h)h}\rightarrow a,\quad\gamma(h)\sqrt{h}\rightarrow1,\quad\frac{A(h)}{h}\rightarrow\varepsilon.
\end{equation}
The weak solutions $\textbf{u}^{h}$ of \eqref{4.1} and corresponding renormalized magnetostatic potentials $v^{h}=\frac{U^{h}}{h}$ satisfy \eqref{4.4}-\eqref{4.16} by assuming initial conditions, then there is $ h=h_{k}\rightarrow0$ such that
\begin{equation}\label{4.11}
\fint_{0}^{h}\textbf{u}^{h}dx_{3}\rightharpoonup\textbf{u} \text{ in } L^{\infty}((0,T);H^{1}(\Omega))\cap L^{2}((0,T);H^{2}(\Omega))\cap \dot{H}^{1}((0,T);L^{\frac{3}{2}}(\Omega)),
\end{equation}
and
\begin{equation}\label{4.12}
v^{h}\rightharpoonup v \text{ in } L^{\infty}((0,T);\dot{H}^{1}(\mathbb{R}^{3}))\cap \dot{H}^{1}((0,T);L^{2}(\mathbb{R}^{3})),
\end{equation}
where $\textbf{u}=(u,0):\Omega\times(0,T)\rightarrow\mathbb{R}^{2}$, $(u, v)$ is a weak solution of
\begin{equation}\label{4.13}
u\wedge\left(\partial_{t}^{2}u-\varepsilon|u|^{2}\Delta'u+|u|^{2}\nabla'v\right)=0,
\end{equation}
where $v=v|_{x_{3}=0}$ and
$$\Delta v=div\left(u\chi_{\Omega}\right)\otimes\delta_{{x_{3}=0}} \text{ in } H^{-1}(\mathbb{R}^{3})~ a.e.~ t.$$
\end{theorem}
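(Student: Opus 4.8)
The plan is to make rigorous the formal derivation of the introduction in four stages. First, from the uniform bounds \eqref{4.4}--\eqref{4.16} I extract the subsequence $h=h_k\to0$: by Banach--Alaoglu and reflexivity, $\fint_0^h\textbf{u}^h\,dx_3$ converges weakly-$*$ as in \eqref{4.11} and $v^h$ as in \eqref{4.12}, the latter using \eqref{4.9} together with Lemma \ref{lemma3.5} for the time regularity of $v^h$. Since the nonlinearities---the cubic term $|\textbf{u}^h|^2\textbf{u}^h$, the triple products defining $w_1^h,w_2^h$, and the cross products $\textbf{u}^h\times\nabla\textbf{u}^h$ and $\textbf{u}^h\times\nabla U^h$---are not weakly continuous, I upgrade to strong convergence: the time bound $\textbf{u}_t^h\in L^2(0,T;L^{3/2})$ from \eqref{4.8}, combined with the compact embedding $H^1\hookrightarrow\hookrightarrow L^p$ ($p<6$), lets me invoke the Aubin--Lions lemma to obtain strong convergence of $\fint_0^h\textbf{u}^h\,dx_3$ in $L^2(0,T;L^p)$, which together with \eqref{4.14} and interpolation controls every cubic and quartic term.

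Next I establish the planar constraint $\textbf{u}=(u,0)$: the stray-field estimate used in the proof of Lemma \ref{lemma3.1}, combined with the bounds \eqref{4.5}, \eqref{4.9} and the scalings \eqref{4.10}, forces $\fint_{\Omega(h)}|u_3^h|^2\,dx\lesssim h\to0$, so the vertical average $\fint_0^h u_3^h\,dx_3\to0$ and the limit lies in the plane. I then test the third component of the renormalized equation \eqref{1.5} against $\phi$, as in the introduction, obtaining the balance between $\partial_t u_3^h$, the precessional term $\gamma h(u_1^h\bar H_2^h-u_2^h\bar H_1^h)$, and the damping term. Here the product-average estimate of Lemma \ref{lemma3.3}, extended to the $L^p$--$L^q$ range precisely to accommodate the $|u|^2$-weighted quantities, replaces averages of products by products of averages up to $O(h)$ errors, while Lemma \ref{lemma3.2} guarantees that $w_1^h$ and $w_2^h$ share a common weak $L^{6/5}$ limit; this common limit is exactly what prevents the singularity flagged in the introduction and assembles the $|u|^2$-weighted combination $\varepsilon|u|^2\Delta'u-|u|^2\nabla'v$. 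The longitudinal damping terms, being parallel to $\textbf{u}^h$, are annihilated by the wedge structure and drop out.

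The crux, and the step I expect to be the main obstacle, is that the in-plane wedge $u\wedge\bar H^h$ does \emph{not} converge simply to $u\wedge\bar H_0$: its weak limit picks up an anomalous defect $\beta\omega_t=-|u|^{-2}u\wedge\partial_t^2u$ (with $\beta=\lim(h\gamma^2)^{-1}=1$) arising from the conversion of stray-field energy into kinetic energy. Physically this defect is the angular acceleration of the in-plane magnetization: in the polar representation $u/|u|=(\cos\theta,\sin\theta)$ the vanishing out-of-plane component $u_3^h$ is slaved, through the third-component equation, to $\partial_t\theta$, and its elimination feeds a second time derivative back into the balance. The plan is to extract this defect by dividing the third-component equation by $\gamma h$ (so that the precessional prefactor becomes $O(1)$ and $L/(\gamma h)\to a$), integrating by parts in time, and identifying the limit of $(\gamma h)^{-1}\partial_t u_3^h$ with the angular-acceleration term via the polar representation and the strong spatial convergence from the first stage. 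The difficulty is threefold: one must capture a genuine weak-limit defect (a concentration/oscillation effect in the vanishing vertical dynamics), track the polar angle at the level of the approximating family rather than only its limit, and make sense of $\partial_t^2u$ despite having only $\textbf{u}_t^h\in L^{3/2}$ in time, so that the second derivative exists merely in a distributional sense---which is precisely why \eqref{4.13} is posed weakly and multiplied through by $|u|^2$.

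Finally I verify the hypotheses of Lemma \ref{lemma3.4}: \eqref{4.12} gives $v^h\rightharpoonup v$ in $\dot H^1(\mathbb{R}^3)$, \eqref{4.11} gives $\fint_0^h\textbf{u}^h\,dx_3\rightharpoonup(u,0)$, and \eqref{4.5} with $A(h)/h\to\varepsilon$ yields $h\int_{\Omega(h)}|\partial_{x_3}\textbf{u}^h|^2\,dx\to0$; the lemma then delivers $\Delta v=\mathrm{div}(u\chi_\Omega)\otimes\delta_{\{x_3=0\}}$ in $H^{-1}(\mathbb{R}^3)$. Assembling the preceding stages, the limiting in-plane relation reads $u\wedge(\varepsilon\Delta'u-\nabla'v)-|u|^{-2}u\wedge\partial_t^2u=0$; multiplying by $|u|^2$ clears the denominator and yields exactly \eqref{4.13}, with the degenerate set $\{|u|=0\}$ treated separately as in the introduction, where both the angular velocity and the kinetic energy vanish.
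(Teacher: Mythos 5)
Your stages 1, 2 and 4 track the paper: weak-$*$ compactness plus Lemma \ref{lemma3.5} gives \eqref{4.11}--\eqref{4.12}, Aubin--Lions upgrades $\fint_0^h\textbf{u}^h\,dx_3$ to strong convergence in $L^{p_0}(0,T;L^q)$, Lemma \ref{lemma3.4} identifies the potential equation for $v$, and you correctly single out $w_1^h$, $w_2^h$, Lemma \ref{lemma3.2} and the $L^p$--$L^q$ product-average estimate of Lemma \ref{lemma3.3} as the tools that tame the $|u|^2$-weighted quantities. The problem is your stage 3, which you yourself flag as ``the main obstacle'': you propose to produce the $u\wedge\partial_t^2u$ term by exhibiting an ``anomalous defect'' in the weak limit of $u\wedge\bar H^h$, tracked through the polar representation $u/|u|=(\cos\theta,\sin\theta)$ along the approximating family. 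That is the formal heuristic from the introduction, not a proof, and as a route it has concrete failure points: the angle $\theta$ is undefined and uncontrolled on $\{u=0\}$ and carries no compactness of its own; there is no defect-measure or concentration mechanism to invoke here; and dividing the third-component equation by $\gamma h$ and ``identifying the limit of $(\gamma h)^{-1}\partial_t u_3^h$ with the angular acceleration'' is precisely the step you leave unexecuted.

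The paper obtains the second time derivative without any of this. It tests the weak formulation \eqref{5.1} with \emph{two} choices of test function: $\Phi=\textbf{u}^h_{\perp}\phi=(-u_2^h,u_1^h,0)\phi$, whose left-hand side is $\fint(u^h\wedge u_t^h)\phi$ and whose right-hand side reduces, after the error estimates, to $w_2^h$ tested against $\phi$ --- this is Proposition \ref{proposition5.1}, giving $w_2^h\rightharpoonup u\wedge u_t$ in $L^2(0,T;L^{6/5})$; and $\Phi=|\textbf{u}^h|^2\hat e_3\phi$, whose left-hand side is the dominant part of $\partial_t w_1^h$ and whose right-hand side converges to $|u|^2\nabla'(u\wedge\nabla'u)-|u|^2u\wedge\nabla'v$ --- this is Proposition \ref{proposition5.2}. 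Lemma \ref{lemma3.2} transfers the first limit to $w_1^h$, and then the trivial identity $\int_0^T\int_\Omega(w_1^h\phi_t+\partial_t w_1^h\phi)\,dx\,dt=0$, passed to the limit, yields $-u\wedge u_{tt}+|u|^2 u\wedge\Delta'u-|u|^2u\wedge\nabla'v=0$ directly: the second derivative comes from integrating $u\wedge u_t$ against $\phi_t$, so it only ever appears distributionally and no division by $|u|^2$ is needed. You are missing the first of these two tests (the $\textbf{u}^h_{\perp}\phi$ substitution that identifies $u\wedge u_t$ as the common weak limit of $w_1^h$ and $w_2^h$), and without it your stage 3 has no rigorous replacement.
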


\section{Proof of Theorem 3.1}\label{section4}
For the convenience of calculation, it is advisable to assume that $|\Omega|=1$. Consider the simplest parameter satisfying \eqref{4.10} and set
$$\gamma=\frac{1}{\sqrt{h}},\quad L=\sqrt{h},\quad A=h,$$
where $a=1,\;\varepsilon=1$. Inserting the above parameters into the equation \eqref{4.3}, we get
\begin{align*}
\int_{0}^{T}\fint_{\Omega(h)}\textbf{u}_{t}^{h}\cdot\Phi dxdt=&\int_{0}^{T}\fint_{\Omega(h)}\Big\{-\sqrt{h}\left(\textbf{u}^{h}\times\nabla\textbf{u}^{h}\right)\cdot\nabla\Phi-\sqrt{h}\left(\textbf{u}^{h}\times\nabla v^{h}\right)\cdot\Phi-h\sqrt{h}\nabla\textbf{u}^{h}\cdot\nabla\Phi\nonumber\\
&-\frac{\sqrt{h}}{\chi_{11}}\textbf{u}^{h}\cdot\Phi-\frac{\mu\sqrt{h}}{\chi_{11}}\left|\textbf{u}^{h}\right|^{2}\textbf{u}^{h}\cdot\Phi-h\sqrt{h}\nabla v^{h}\cdot\Phi\Big\}dxdt,
\end{align*}
\begin{align}\label{5.1}
\Rightarrow\frac{1}{\sqrt{h}}\int_{0}^{T}\fint_{\Omega(h)}\textbf{u}_{t}^{h}\cdot\Phi dxdt=&\int_{0}^{T}\fint_{\Omega(h)}\Big\{-\left(\textbf{u}^{h}\times\nabla\textbf{u}^{h}\right)\cdot\nabla\Phi-\left(\textbf{u}^{h}\times\nabla v^{h}\right)\cdot\Phi-h\nabla\textbf{u}^{h}\cdot\nabla\Phi\nonumber\\
&-\frac{1}{\chi_{11}}\textbf{u}^{h}\cdot\Phi-\frac{\mu}{\chi_{11}}\left|\textbf{u}^{h}\right|^{2}\textbf{u}^{h}\cdot\Phi-h\nabla v^{h}\cdot\Phi\Big\}dxdt,
\end{align}
for any $\Phi\in L^{2}((0,T);H^{1}(\Omega(h)))$, where $v^{h}=\frac{U^{h}}{h}$.

It's easy to know that $\fint_{0}^{h}\textbf{u}^{h}dx_{3}$ and $v^{h}$ are uniformly bounded in space-time by \eqref{4.4}, \eqref{4.5}, \eqref{4.7}-\eqref{4.9} and lemma \ref{lemma3.5}, then \eqref{4.11} and \eqref{4.12} can be obtained from the weak compactness.

According to the compact embedding Theorem, we have $H^{1}(\Omega)\hookrightarrow\hookrightarrow L^{q}(\Omega)\hookrightarrow L^{\frac{3}{2}}(\Omega)$, $\frac{3}{2}\leq q<\infty$.
Let
$$W=\left\{v\in L^{p_{0}}(0,T;H^{1}(\Omega)),\,v_{t}\in L^{2}(0,T;L^{\frac{3}{2}}(\Omega)),\,1<p_{0}<\infty\right\},$$
then $W\hookrightarrow\hookrightarrow L^{p_{0}}\left(0,T;L^{q}(\Omega)\right)$ from Aubin-Lions Lemma,
and
$$\fint_{0}^{h}\textbf{u}^{h}dx_{3}\rightharpoonup \textbf{u} \text{ in } W.$$
Hence
\begin{align}\label{5.2}
\fint_{0}^{h}\textbf{u}^{h}dx_{3}\rightarrow \textbf{u} \text{ in } L^{p_{0}}(0,T;L^{q}(\Omega)),\; 1<p_{0}<\infty,\,\frac{3}{2}\leq q<\infty.
\end{align}

To prove the limit equation, we consider
$$w_{1}^{h}=\frac{1}{\sqrt{h}}\fint_{0}^{h}\left|\textbf{u}^{h}\right|^{2}u_{3}^{h}dx_{3},\quad w_{2}^{h}=\frac{1}{\sqrt{h}}\fint_{0}^{h}\left|\textbf{u}^{h}\right|^{2}\frac{\partial U^{h}}{\partial x_{3}}dx_{3}.$$
It is known from Section 2 that they are uniformly bounded in $L^{\infty}(0,T;L^{\frac{6}{5}})$, and have the same weak limit. Next, we will obtain their weak limits and the weak limits of their time derivatives in Proposition \ref {5.1} and Proposition \ref {5.2}, respectively. Finally, we derive the limit equation \eqref {4.13}.

\begin{proposition}\label{proposition5.1}
Under the assumptions of Theorem \ref{theorem4.1}, there exists $h=h_{k}\rightarrow0$ such that
$$w_{2}^{h}\rightharpoonup u\wedge u_{t} \text{ in } L^{2}(0,T;L^{\frac{6}{5}}(\Omega)).$$
\end{proposition}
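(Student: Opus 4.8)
The plan is to combine a weak-compactness extraction with an exact pointwise identity for the integrand of $w_2^h$ obtained from the equation. Since $w_2^h$ is uniformly bounded in $L^{\infty}(0,T;L^{6/5}(\Omega))$ (Lemma \ref{lemma3.1} together with the bounds \eqref{4.14} and \eqref{4.9}), hence in $L^{2}(0,T;L^{6/5}(\Omega))$, I may pass to a subsequence $h=h_k\to 0$ along which $w_2^h$ converges weakly in $L^{2}(0,T;L^{6/5}(\Omega))$; it then suffices to prove that $\int_0^T\!\int_\Omega w_2^h\psi\,dxdt\to\int_0^T\!\int_\Omega (u\wedge u_t)\psi\,dxdt$ for every $\psi\in C_0^\infty(\Omega\times(0,T))$.

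To produce $u\wedge u_t$ I would exploit the gyrotropic structure of \eqref{6.8}. Writing the equation as $\textbf{u}_t^h=\gamma\,\textbf{u}^h\times\textbf{H}+L\textbf{H}$, taking the cross product with $\textbf{u}^h$ and using $\textbf{u}^h\times(\textbf{u}^h\times\textbf{H})=(\textbf{u}^h\cdot\textbf{H})\textbf{u}^h-|\textbf{u}^h|^2\textbf{H}$, the third component gives $u^h\wedge u_t^h=\gamma[(\textbf{u}^h\cdot\textbf{H})u_3^h-|\textbf{u}^h|^2H_3]+L(u^h\wedge H)$, while the third component of the equation itself gives $u^h\wedge H=\gamma^{-1}[(u_t^h)_3-LH_3]$. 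Eliminating $u^h\wedge H$, and inserting $H_3=A\Delta u_3^h-\chi_{11}^{-1}(1+\mu|\textbf{u}^h|^2)u_3^h-\partial_{x_3}U^h$ and $\textbf{u}^h\cdot\textbf{H}=A\,\textbf{u}^h\cdot\Delta\textbf{u}^h-\chi_{11}^{-1}(1+\mu|\textbf{u}^h|^2)|\textbf{u}^h|^2-\textbf{u}^h\cdot\nabla U^h$, the two terms $\chi_{11}^{-1}(1+\mu|\textbf{u}^h|^2)|\textbf{u}^h|^2u_3^h$ cancel. With $\gamma=h^{-1/2}$, $L=h^{1/2}$, $A=h$ and $\nabla U^h=h\nabla v^h$ this yields the exact identity
\[ w_2^h=\fint_0^h\big(u^h\wedge u_t^h\big)\,dx_3+R^h, \]
where $R^h=\sqrt h\fint_0^h\!\big(|\textbf{u}^h|^2\Delta u_3^h-(\textbf{u}^h\cdot\Delta\textbf{u}^h)u_3^h+(\textbf{u}^h\cdot\nabla v^h)u_3^h\big)dx_3-h\fint_0^h(u_t^h)_3\,dx_3+h^{3/2}\fint_0^h H_3\,dx_3$. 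The decisive point is that the two genuinely singular $h^{-1/2}$ contributions cancel, so that every term of $R^h$ carries a strictly positive power of $h$.

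For the main term I would apply Lemma \ref{lemma3.3} with $(p,q)=(3,\tfrac32)$ to each scalar product occurring in $u^h\wedge u_t^h$, replacing $\fint_0^h(u^h\wedge u_t^h)$ by $\big(\fint_0^h u^h\big)\wedge\big(\fint_0^h u_t^h\big)$ up to an error controlled by $h\,(\fint_{\Omega(h)}|\partial_{x_3}\textbf{u}^h|^3)^{1/3}(\fint_{\Omega(h)}|\textbf{u}_t^h|^{3/2})^{2/3}$, which vanishes after integration in $t$ thanks to \eqref{4.16} and \eqref{4.8}. Since $\fint_0^h u_t^h\,dx_3=\partial_t\fint_0^h u^h\,dx_3$, I then combine the strong convergence $\fint_0^h u^h\to u$ in $L^{p_0}(0,T;L^{q}(\Omega))$ from \eqref{5.2} with the weak convergence $\partial_t\fint_0^h u^h\rightharpoonup u_t$ in $L^2(0,T;L^{3/2}(\Omega))$ coming from \eqref{4.11}; the resulting strong-times-weak pairing identifies the limit of the main term as $u\wedge u_t$ when tested against $\psi$.

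It remains to check $R^h\to0$ in $L^2(0,T;L^{6/5}(\Omega))$, which is Hölder/Gagliardo–Nirenberg bookkeeping against the uniform bounds: $|\textbf{u}^h|^2\in L^\infty(0,T;L^3)$ (from \eqref{4.14}) and $\Delta\textbf{u}^h\in L^2(0,T;L^2)$ (from \eqref{4.7}) dominate the first two remainder terms, while $(u_t^h)_3$ and $H_3$ are controlled by \eqref{4.8}, \eqref{4.7} and \eqref{4.14}; the stray-field remainder uses the \emph{pointwise-in-time} estimate $\sup_{h,t}\frac1h\int_{\mathbb{R}^3}|\nabla v^h|^2dx<\infty$ from Lemma \ref{6.10}, which keeps $\fint_{\Omega(h)}|\nabla v^h|^2$ bounded uniformly in $t$, so that after multiplication by $\sqrt h$ this term also vanishes. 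I expect the main obstacle to be precisely the algebra of the second paragraph—verifying the exact cancellation of the $h^{-1/2}$-singular terms and that all surviving remainders are of strictly lower order—since without this cancellation $w_2^h$ could not even remain bounded; the sharper (not merely $t$-integrated) energy estimate of Lemma \ref{6.10} is what makes the stray-field remainder negligible.
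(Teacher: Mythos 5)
Your proposal is correct and follows essentially the same route as the paper: your pointwise identity obtained by crossing \eqref{6.8} with $\textbf{u}^h$ and using $\textbf{u}^h\times(\textbf{u}^h\times\textbf{H})=(\textbf{u}^h\cdot\textbf{H})\textbf{u}^h-|\textbf{u}^h|^2\textbf{H}$ is exactly the paper's substitution of $\Phi=\textbf{u}^h_{\perp}\phi$ into \eqref{5.1} (yielding \eqref{5.4}), with the same cancellation of the $\chi_{11}$-terms and the same $O(h^{\alpha})$ remainder structure, and your Step on $\fint_0^h u^h\wedge u_t^h\,dx_3\rightharpoonup u\wedge u_t$ via Lemma \ref{lemma3.3} with $(p,q)=(3,\tfrac32)$ plus the strong-times-weak pairing is identical to the paper's Step 2. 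The only cosmetic deviations are that you eliminate $u^h\wedge H$ through the third component of the equation rather than keeping it as $h^{3/2}(u^h\wedge\Delta u^h-u^h\wedge\nabla' v^h)$, and you bound the stray-field remainder by a direct H\"older argument with the pointwise-in-time bound of Lemma \ref{6.10} instead of the paper's integration by parts; both work.
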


\begin{proof}
The proof of Proposition \ref {5.1} is divided into two steps.

\underline{Step 1}: $w_{2}^{h}\rightharpoonup\fint_{0}^{h}u^{h}\wedge u_{t}^{h}dx_{3} \text{ in } L^{2}(0,T;L^{\frac{6}{5}}(\Omega)).$

Substituting $\Phi=\textbf{u}^{h}_{\perp}\phi=\left(-u_{2}^{h},u^{h}_{1},0\right)\phi$ into the equation \eqref{5.1}, where $\phi\in C_{0}^{\infty}(\Omega\times(0,T))$, we obtain
\begin{align}\label{5.3}
\frac{1}{\sqrt{h}}\int_{0}^{T}\!\!\!\fint_{\Omega(h)}\textbf{u}_{t}^{h}\cdot\textbf{u}^{h}_{\perp}\phi dxdt&=\int_{0}^{T}\!\!\!\fint_{\Omega(h)}\Big\{-\left(\textbf{u}^{h}\times\nabla\textbf{u}^{h}\right)\cdot\nabla\left(\textbf{u}^{h}_{\perp}\phi\right)-\left(\textbf{u}^{h}\times\nabla v^{h}\right)\cdot\textbf{u}^{h}_{\perp}\phi\notag\\
&\quad-h\nabla\textbf{u}^{h}\cdot\textbf{u}^{h}_{\perp}\nabla\phi-h\nabla v^{h}\cdot\textbf{u}^{h}_{\perp}\phi\Big\}dxdt.
\end{align}
If $\textbf{X}\in\mathbb{R}^{3}$,
\begin{align*}
\textbf{u}_{\perp}^{h}\cdot\left(\textbf{u}^{h}\times \textbf{X}\right)&=\left(\textbf{u}^{h}\cdot \textbf{X}\right)u_{3}^{h}-\left|\textbf{u}^{h}\right|^{2}X_{3},
\end{align*}
we have
$$\textbf{u}^{h}_{\perp}\left(\textbf{u}^{h}\times\nabla v^{h}\right)=\left(\textbf{u}^{h}\cdot\nabla v^{h}\right)u_{3}^{h}-\left|\textbf{u}^{h}\right|^{2}\frac{\partial v^{h}}{\partial x_{3}}.$$
Next, we obtain from \eqref{5.3}
\begin{align}\label{5.4}
\int_{0}^{T}\fint_{\Omega(h)}\left(u^{h}\wedge u^{h}_{t}\right)\phi dxdt=&\int^{T}_{0}\fint_{\Omega(h)}\Big\{\sqrt{h}\left(\textbf{u}^{h}\times\Delta\textbf{u}^{h}\right)\cdot\textbf{u}^{h}_{\perp}\phi-\sqrt{h}u^{h}_{3}
\left(\textbf{u}^{h}\cdot\nabla v^{h}\right)\phi\notag\\
&\quad+\frac{1}{\sqrt{h}}\left|\textbf{u}^{h}\right|^{2}\frac{\partial U^{h}}{\partial x_{3}}\phi-h\sqrt{h}\nabla\textbf{u}^{h}\cdot\textbf{u}^{h}_{\perp}\nabla\phi-h\sqrt{h}\nabla v^{h}\cdot\textbf{u}^{h}_{\perp}\phi\Big\}dxdt.
\end{align}
By H\"{o}lder's inequality, the right-hand side of equation \eqref {5.4} can be controlled by
\begin{align}\label{5.5}
&\leq \frac{1}{\sqrt{h}}\int_{0}^{T}\fint_{\Omega(h)}\left|\textbf{u}^{h}\right|^{2}\frac{\partial U^{h}}{\partial x_{3}}\phi dxdt+\left|\int^{T}_{0}\fint_{\Omega(h)}\sqrt{h}\left(\textbf{u}^{h}\times\Delta\textbf{u}^{h}\right)\cdot\textbf{u}^{h}_{\perp}\phi dxdt\right|\notag\\
&\quad+\left|\int^{T}_{0}\fint_{\Omega(h)}\sqrt{h}u^{h}_{3}
\left(\textbf{u}^{h}\cdot\nabla v^{h}\right)\phi dxdt\right|+\left|\int^{T}_{0}\fint_{\Omega(h)}h\sqrt{h}\nabla\textbf{u}^{h}\cdot\textbf{u}^{h}_{\perp}\nabla\phi dxdt\right|+\left|\int^{T}_{0}\fint_{\Omega(h)}h\sqrt{h}\nabla v^{h}\cdot\textbf{u}^{h}_{\perp}\phi dxdt\right|\notag\\
&\lesssim\frac{1}{\sqrt{h}}\int_{0}^{T}\fint_{\Omega(h)}\left|\textbf{u}^{h}\right|^{2}\frac{\partial U^{h}}{\partial x_{3}}\phi dxdt+\sqrt{h}\left(\int_{0}^{T}\fint_{\Omega(h)}\left|\textbf{u}^{h}\right|^{4}dxdt\right)^{\frac{1}{2}}\left(\int_{0}^{T}\fint_{\Omega(h)}\left|\Delta\textbf{u}^{h}\right|^{2}dxdt\right)^
{\frac{1}{2}}\notag\\
&\quad+\sqrt{h}\left|\int_{0}^{T}\fint_{\Omega(h)}\left(\textbf{u}^{h}\cdot\nabla v^{h}\right)u^{h}_{3}dxdt\right|+h\sqrt{h}\left(\int_{0}^{T}\fint_{\Omega(h)}\left|\nabla\textbf{u}^{h}\right|^{2}dxdt\right)^{\frac{1}{2}}\left(\int_{0}^{T}\fint_{\Omega(h)}\left|\textbf{u}^{h}\right|^{2}dxdt\right)
^{\frac{1}{2}}\notag\\
&\quad+h\left(\int_{0}^{T}\int_{\mathbb{R}^{3}}\left|\nabla v^{h}\right|^{2}dxdt\right)^{\frac{1}{2}}\left(\int_{0}^{T}\fint_{\Omega(h)}\left|\textbf{u}^{h}\right|^{2}dxdt\right)^{\frac{1}{2}},
\end{align}
and the third term of \eqref{5.5} is bounded by
\begin{align*}
&\left|\int_{0}^{T}\fint_{\Omega(h)}\left(\textbf{u}^{h}\cdot\nabla v^{h}\right)u^{h}_{3}dxdt\right|\\
&=\frac{1}{h}\left|\int_{0}^{T}\int_{\Omega(h)}\nabla u_{3}^{h}\cdot\textbf{u}^{h}\cdot v^{h}+u_{3}^{h}\cdot div\textbf{u}^{h}\cdot v^{h}dxdt\right|\\
&\lesssim\frac{1}{h^{\frac{1}{6}}}\left(\int_{0}^{T}\fint_{\Omega(h)}\left|\nabla u^{h}_{3}\right|^{2}dxdt\right)^{\frac{1}{2}}\left(\int_{0}^{T}\fint_{\Omega(h)}\left|\textbf{u}^{h}\right|^{3}dxdt\right)^{\frac{1}{3}}\left(\int_{0}^{T}\int_{\mathbb{R}^{3}}\left|v^{h}\right|^{6}dxdt\right)
^{\frac{1}{6}}\\
&\quad+\frac{1}{h^{\frac{1}{6}}}\left(\int_{0}^{T}\fint_{\Omega(h)}\left|u_{3}^{h}\right|^{3}dxdt\right)^{\frac{1}{3}}\left(\int_{0}^{T}\fint_{\Omega(h)}\left|\nabla\textbf{u}^{h}\right|^{2}dxdt\right)
^{\frac{1}{2}}\left(\int_{0}^{T}\int_{\mathbb{R}^{3}}\left|v^{h}\right|^{6}dxdt\right)^{\frac{1}{6}}.
\end{align*}
Using H\"{o}lder's inequality and the G-N inequality, for any fixed $h$, we have
\begin{align*}
\int_{0}^{T}\!\!\!\fint_{\Omega(h)}\left|\textbf{u}^{h}\right|^{3}dxdt\lesssim\left(\int_{0}^{T}\!\!\!\fint_{\Omega(h)}\left|\textbf{u}^{h}\right|^{4}dxdt\right)^{\frac{3}{4}},\;\,
\left(\int_{0}^{T}\!\!\!\int_{\mathbb{R}^{3}}\left|v^{h}\right|^{6}dxdt\right)^{\frac{1}{6}}\lesssim\left(\int_{0}^{T}\!\!\!\int_{\mathbb{R}^{3}}\left|\nabla v^{h}\right|^{2}dxdt\right)^{\frac{1}{2}}.
\end{align*}
By using \eqref{4.5}, \eqref{4.6} and \eqref{4.9}, we can obtain
$$-\sqrt{h}\int_{0}^{T}\fint_{\Omega(h)}\left(\textbf{u}^{h}\cdot\nabla v^{h}\right)u^{h}_{3}dxdt=O(h^{\frac{1}{3}}).$$
Combining \eqref{5.5}, \eqref{4.4}-\eqref{4.7} and \eqref{4.9},  we have
$$\int_{0}^{T}\fint_{\Omega(h)}\left(u^{h}\wedge u^{h}_{t}\right)\phi dxdt=\frac{1}{\sqrt{h}}\int_{0}^{T}\fint_{\Omega(h)}\left|\textbf{u}^{h}\right|^{2}\frac{\partial U^{h}}{\partial x_{3}}\phi dxdt+O(h^{\frac{1}{3}}).$$
Therefore, when $h\rightarrow0$, one has
$$\int_{0}^{T}\fint_{\Omega(h)}\left(u^{h}\wedge u^{h}_{t}\right)\phi dxdt\rightarrow\frac{1}{\sqrt{h}}\int_{0}^{T}\fint_{\Omega(h)}\left|\textbf{u}^{h}\right|^{2}\frac{\partial U^{h}}{\partial x_{3}}\phi dxdt.$$
The conclusion of Step 1 is proved.

\underline{Step 2}: $\fint_{0}^{h}u^{h}\wedge u_{t}^{h}dx_{3}\rightharpoonup u\wedge u_{t} \text{ in } L^{2}(0,T;L^{\frac{6}{5}}(\Omega))$, i.e.,
$\int_{0}^{T}\fint_{\Omega(h)}u^{h}\wedge u^{h}_{t}\cdot\phi dxdt\rightarrow\int_{0}^{T}\int_{\Omega}u\wedge u_{t}\cdot\phi dxdt.$

From Lemma \ref{lemma3.3}, H\"{o}lder's inequality, \eqref{4.5}, and \eqref{4.8}, we obtain as $h\rightarrow0$
\begin{align*}
&\quad\left|\int_{0}^{T}\left[\fint_{\Omega(h)}u^{h}_{1}\partial_{t}u^{h}_{2}\phi dx-\int_{\Omega}\left(\fint_{0}^{h}u_{1}^{h}dx_{3}\right)\left(\fint_{0}^{h}\partial_{t}u_{2}^{h}dx_{3}\right)\phi dx\right]dt\right|\\
&\leq h\left[\int_{0}^{T}\left(\fint_{\Omega(h)}\left|\frac{\partial u_{1}^{h}}{\partial x_{3}}\right|^{3}dx\right)^{\frac{2}{3}}dt\right]^{\frac{1}{2}}\left[\int_{0}^{T}\left(\fint_{\Omega(h)}\left|\partial_{t}u
^{h}_{2}\right|^{\frac{3}{2}}\left|\phi\right|^{\frac{3}{2}}dx\right)^{\frac{4}{3}}dt\right]^{\frac{1}{2}}\rightarrow0,
\end{align*}
that is,
$$\int_{0}^{T}\fint_{\Omega(h)}u^{h}_{1}\partial_{t}u^{h}_{2}\phi dxdt\rightarrow\int_{0}^{T}\int_{\Omega}\left(\fint_{0}^{h}u_{1}^{h}dx_{3}\right)\left(\fint_{0}^{h}\partial_{t}u_{2}^{h}dx_{3}\right)\phi dxdt.$$
Similarly,
$$\int_{0}^{T}\fint_{\Omega(h)}u^{h}_{2}\partial_{t}u^{h}_{1}\phi dxdt\rightarrow\int_{0}^{T}\int_{\Omega}\left(\fint_{0}^{h}u_{2}^{h}dx_{3}\right)\left(\fint_{0}^{h}\partial_{t}u_{1}^{h}dx_{3}\right)\phi dxdt.$$
By virtue of \eqref{4.11} and \eqref{5.2}, we can infer that
$\fint_{0}^{h}\partial_{t}\textbf{u}^{h}dx_{3}\rightharpoonup\textbf{u}_{t} \text{ in } L^{2}(0,T;L^{\frac{3}{2}}(\Omega))$,
$\fint_{0}^{h}\textbf{u}^{h}dx_{3}\rightarrow\textbf{u} \text{ in } L^{2}(0,T;L^{3}(\Omega))$, which
combined with \eqref{4.8} yields that
$$\int_{0}^{T}\int_{\Omega}\left(\fint_{0}^{h}u_{1}^{h}dx_{3}\right)\left(\fint_{0}^{h}\partial_{t}u_{2}^{h}dx_{3}\right)\phi dxdt\rightarrow\int_{0}^{T}\int_{\Omega}u_{1}\partial_{t}u_{2}\phi dxdt,$$
$$\int_{0}^{T}\int_{\Omega}\left(\fint_{0}^{h}u_{2}^{h}dx_{3}\right)\left(\fint_{0}^{h}\partial_{t}u_{1}^{h}dx_{3}\right)\phi dxdt\rightarrow\int_{0}^{T}\int_{\Omega}u_{2}\partial_{t}u_{1}\phi dxdt.$$
Thus
$$\int_{0}^{T}\fint_{\Omega(h)}u^{h}\wedge u^{h}_{t}\cdot\phi dxdt\rightarrow\int_{0}^{T}\int_{\Omega}u\wedge u_{t}\cdot\phi dxdt.$$
The conclusion of Step 2 is proved. Combining the conclusions of Step 1 and Step 2, the proof of  Proposition \ref{proposition5.1} is completed.
\end{proof}

\begin{proposition}\label{proposition5.2}
Under the assumptions of Theorem \ref{theorem4.1}, there exists $h=h_{k}\rightarrow0$ such that
$$\partial_{t}w_{1}^{h}\rightharpoonup|u|^{2}\nabla'(u\wedge\nabla'u)-
|u|^{2}u\wedge\nabla'v \text{ in } H^{-1}(0,T;L^{\frac{6}{5}}(\Omega)).$$
\end{proposition}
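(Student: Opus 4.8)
The plan is to run the same scheme as in Proposition~\ref{proposition5.1}, choosing a test function in \eqref{5.1} tailored to reproduce $\partial_{t}w_{1}^{h}$ on the left. Since
$$\partial_{t}\!\left(\left|\textbf{u}^{h}\right|^{2}u_{3}^{h}\right)=\textbf{u}_{t}^{h}\cdot\left(2u_{3}^{h}\textbf{u}^{h}+\left|\textbf{u}^{h}\right|^{2}\hat{e}_{3}\right),$$
I would substitute $\Phi=\left(2u_{3}^{h}\textbf{u}^{h}+\left|\textbf{u}^{h}\right|^{2}\hat{e}_{3}\right)\phi$, with $\phi\in C_{0}^{\infty}(\Omega\times(0,T))$, into \eqref{5.1}; its admissibility as an element of $L^{2}(0,T;H^{1}(\Omega(h)))$ follows from \eqref{4.6}, \eqref{4.14} and \eqref{4.16}. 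Because $\phi$ does not depend on $x_{3}$, the left-hand side of \eqref{5.1} collapses to $\int_{0}^{T}\int_{\Omega}\partial_{t}w_{1}^{h}\cdot\phi\,dx\,dt$, which is precisely the pairing testing the weak limit of $\partial_{t}w_{1}^{h}$ in $H^{-1}(0,T;L^{\frac{6}{5}}(\Omega))$; a uniform bound of the right-hand side in terms of $\phi$ supplies the requisite equiboundedness, so it only remains to pass to the limit on the right.

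Writing $\Psi=2u_{3}^{h}\textbf{u}^{h}+\left|\textbf{u}^{h}\right|^{2}\hat{e}_{3}$, I expect only the $\left|\textbf{u}^{h}\right|^{2}\hat{e}_{3}$ part to survive: every contribution carrying the factor $u_{3}^{h}$ is negligible thanks to the smallness estimate $\fint_{\Omega(h)}\left|u_{3}^{h}\right|^{2}dx\lesssim h$, which follows from \cite[Lemma~2.1]{CM} together with \eqref{4.5} and \eqref{4.9}. In the stray-field term the relation $(\textbf{u}^{h}\times\nabla v^{h})\cdot\textbf{u}^{h}=0$ kills the $2u_{3}^{h}\textbf{u}^{h}$ piece exactly, while the $\left|\textbf{u}^{h}\right|^{2}\hat{e}_{3}$ piece produces $-\left|\textbf{u}^{h}\right|^{2}(u^{h}\wedge\nabla'v^{h})\phi$, the candidate limit $-|u|^{2}u\wedge\nabla'v$. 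The two lower-order terms $-\frac{1}{\chi_{11}}\textbf{u}^{h}\cdot\Phi$ and $-\frac{\mu}{\chi_{11}}\left|\textbf{u}^{h}\right|^{2}\textbf{u}^{h}\cdot\Phi$ reduce to multiples of $\left|\textbf{u}^{h}\right|^{2}u_{3}^{h}$ and $\left|\textbf{u}^{h}\right|^{4}u_{3}^{h}$ and vanish via the same $u_{3}^{h}$-smallness interpolated against \eqref{4.6} and \eqref{4.14}, and the two terms carrying an explicit factor $h$ vanish directly from \eqref{4.5}, \eqref{4.7} and \eqref{4.9}.

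The core of the argument, and the step I expect to be the main obstacle, is the exchange term $-(\textbf{u}^{h}\times\nabla\textbf{u}^{h})\cdot\nabla\Phi$. Discarding its $u_{3}^{h}$- and $\partial_{3}\textbf{u}^{h}$-contributions (again small by $\fint_{\Omega(h)}\left|u_{3}^{h}\right|^{2}dx\lesssim h$ and by \eqref{4.5}), its leading part equals $-(u^{h}\wedge\nabla'u^{h})\cdot\nabla'\!\left(\left|\textbf{u}^{h}\right|^{2}\phi\right)$, which after an integration by parts in the planar variables is the weak form of $\left|\textbf{u}^{h}\right|^{2}\nabla'\!\cdot(u^{h}\wedge\nabla'u^{h})=\left|\textbf{u}^{h}\right|^{2}(u^{h}\wedge\Delta'u^{h})$ tested against $\phi$. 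Passing to the limit in this cubic vertical average is delicate because $\nabla'u^{h}$ converges only weakly; I would repeatedly apply the decoupling estimate of Lemma~\ref{lemma3.3} to replace vertical averages of products by products of vertical averages, the errors being $O(h)$ through the factor $h\,\|\partial_{3}\textbf{u}^{h}\|$ and hence vanishing, and then combine the strong convergence $\fint_{0}^{h}\textbf{u}^{h}dx_{3}\to\textbf{u}$ from \eqref{5.2} for the factor $\left|\textbf{u}^{h}\right|^{2}u^{h}$ with the weak convergence of the second-derivative average furnished by \eqref{4.7}, using the Gagliardo-Nirenberg inequality to guarantee the integrability required for these weak-times-strong products. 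For the stray-field term the analogous limit rests on the weak convergence $\nabla'v^{h}\rightharpoonup\nabla'v$ from \eqref{4.12} and on Lemma~\ref{lemma3.4} identifying $v=v|_{x_{3}=0}$. Collecting the two surviving contributions gives $\partial_{t}w_{1}^{h}\rightharpoonup|u|^{2}\nabla'(u\wedge\nabla'u)-|u|^{2}u\wedge\nabla'v$ in $H^{-1}(0,T;L^{\frac{6}{5}}(\Omega))$, as claimed.
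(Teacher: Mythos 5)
Your proposal is correct and its engine coincides with the paper's: in both arguments the decisive step is to test the weak formulation \eqref{5.1} with (a multiple of) $\left|\textbf{u}^{h}\right|^{2}\hat{e}_{3}\phi$, observe that only the exchange and stray-field contributions survive, and pass to the limit in those cubic vertical averages by combining the decoupling estimate of Lemma \ref{lemma3.3} with the strong convergence of $\fint_{0}^{h}\textbf{u}^{h}dx_{3}$ and $\fint_{0}^{h}\left|\textbf{u}^{h}\right|^{2}dx_{3}$ against the weak convergence of $\fint_{0}^{h}\Delta \textbf{u}^{h}dx_{3}$ and $\fint_{0}^{h}\nabla v^{h}dx_{3}$. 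Where you genuinely diverge is the product-rule cross term $\frac{2}{\sqrt{h}}\fint_{0}^{h}\textbf{u}^{h}\cdot\textbf{u}^{h}_{t}\,u_{3}^{h}dx_{3}$: the paper treats it head-on in its Step 1, showing it converges weakly to zero via Lemma \ref{lemma3.3}, the uniform $L^{2}(0,T;L^{\frac{6}{5}})$ bound on $\frac{1}{\sqrt{h}}\fint_{0}^{h}\textbf{u}^{h}\cdot\textbf{u}^{h}_{t}dx_{3}$, and the strong convergence $\fint_{0}^{h}u_{3}^{h}dx_{3}\rightarrow0$; you instead absorb it into the test function $\Phi=\left(2u_{3}^{h}\textbf{u}^{h}+\left|\textbf{u}^{h}\right|^{2}\hat{e}_{3}\right)\phi$, whereupon its exchange and gyroscopic contributions on the right vanish \emph{identically} (each factor of $\partial_{i}\Phi$ coming from $2u_{3}^{h}\textbf{u}^{h}\phi$ is a scalar multiple of $\textbf{u}^{h}$ or $\partial_{i}\textbf{u}^{h}$, both orthogonal to $\textbf{u}^{h}\times\partial_{i}\textbf{u}^{h}$), so that only the zeroth-order terms and the $O(h)$ terms need the smallness $\fint_{\Omega(h)}\left|u_{3}^{h}\right|^{2}dx\lesssim h$ with interpolation against \eqref{4.6} and \eqref{4.14}. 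This is a legitimate and arguably tidier bookkeeping, since it never pairs $\textbf{u}^{h}_{t}$ with $u_{3}^{h}$. One small imprecision: for the $\partial_{3}$-part of the exchange term arising from $\left|\textbf{u}^{h}\right|^{2}\hat{e}_{3}\phi$, the justification is not the $u_{3}^{h}$-smallness you cite; either keep the full three-dimensional gradient and integrate by parts to $\left(u^{h}\wedge\Delta u^{h}\right)\left|\textbf{u}^{h}\right|^{2}\phi$ exactly as the paper does (dropping $\fint_{0}^{h}\partial_{3}^{2}u^{h}dx_{3}$ in the limit by the Neumann condition), or invoke the $O(h^{2/3})$ decay implicit in the proofs of \eqref{4.15}--\eqref{4.16} to control $\int_{0}^{T}\fint_{\Omega(h)}\left|\textbf{u}^{h}\right|^{2}\left|\nabla\textbf{u}^{h}\right|^{2}dxdt$.
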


\begin{proof}
Noticing that
$$\partial_{t}w_{1}^{h}=\frac{1}{\sqrt{h}}\fint_{0}^{h}2\textbf{u}^{h}\cdot\textbf{u}^{h}_{t}\cdot u^{h}_{3}dx_{3}+\frac{1}{\sqrt{h}}\fint_{0}^{h}\left|\textbf{u}^{h}\right|^{2}\partial_{t}u_{3}^{h}dx_{3},$$
we prove the weak limits of the two terms on the right-hand side of the above equality, respectively.

\underline{Step 1}: $\frac{1}{\sqrt{h}}\fint_{0}^{h}\textbf{u}^{h}\cdot\textbf{u}_{t}^{h}\cdot u_{3}^{h}dx_{3}\rightharpoonup0 \text{ in } H^{-1}(0,T;L^{\frac{6}{5}}(\Omega))$, that is,
for any $\phi\in C_{0}^{\infty}(\Omega\times(0,T))$, $\frac{1}{\sqrt{h}}\int_{0}^{T}\fint_{\Omega(h)}\textbf{u}^{h}\cdot\textbf{u}_{t}^{h}\cdot u_{3}^{h}\phi dxdt\rightarrow0.$

According to Lemma \ref{lemma3.3}, H\"{o}lder's inequality, \eqref{4.8}, \eqref{4.14},  and  \eqref{4.15}, we obtain
\begin{align*}
&\frac{1}{\sqrt{h}}\left|\int_{0}^{T}\left[\int_{\Omega}\left(\fint_{0}^{h}\textbf{u}^{h}\cdot\textbf{u}_{t}^{h}\cdot\phi\cdot u_{3}^{h}dx_{3}-\fint_{0}^{h}\textbf{u}^{h}\cdot\textbf{u}^{h}_{t}\cdot\phi dx_{3}\cdot\fint_{0}^{h}u_{3}^{h}dx_{3}\right)dx\right]dt\right|\\
&\leq\sqrt{h}\sup\limits_{t}\left(\fint_{\Omega(h)}
\left|\textbf{u}^{h}\right|^{6}\left|\phi\right|^{6}dx\right)^{\frac{1}{6}}\left[\int_{0}^{T}\!\!\!\left(\fint_{\Omega(h)}\left|\textbf{u}_{t}^{h}\right|^{\frac{3}{2}}dx\right)^{\frac{4}{3}}dt\right]^{\frac{1}{2}}
\left[\int_{0}^{T}\!\!\!\left(\fint_{\Omega(h)}\left|\frac{\partial u_{3}^{h}}{\partial x_{3}}\right|^{6}dx\right)^{\frac{1}{3}}dt\right]^{\frac{1}{2}}\rightarrow0,
\end{align*}
that is,
$$\frac{1}{\sqrt{h}}\int_{0}^{T}\fint_{\Omega(h)}\textbf{u}^{h}\cdot\textbf{u}_{t}^{h}\cdot u_{3}^{h}dx_{3}\cdot\phi dxdt\rightarrow\frac{1}{\sqrt{h}}\int_{0}^{T}\int_{\Omega}\fint_{0}^{h}\textbf{u}^{h}\cdot\textbf{u}_{t}^{h}dx_{3}\cdot\fint_{0}^{h}u_{3}^{h}dx_{3}
\cdot\phi dxdt.$$
By H\"{o}lder's inequality, we have
\begin{align*}
\quad\int_{0}^{T}\left(\int_{\Omega}\left|\frac{1}{\sqrt{h}}\fint_{0}^{h}\textbf{u}^{h}\cdot\textbf{u}_{t}^{h}dx_{3}\right|^{\frac{6}{5}}dx\right)^{\frac{5}{3}}dt
\leq\left(\sup\limits_{t}\frac{1}{h^{3}}\fint_{\Omega(h)}\left|\textbf{u}^{h}\right|^{6}dx\right)^{\frac{1}{3}}\int_{0}^{T}\left(\fint_{\Omega(h)}\left|\textbf{u}_{t}^{h}\right|
^{\frac{3}{2}}dx\right)^{\frac{4}{3}}dt,
\end{align*}
and using G-N inequality and \eqref{4.5}, we can further get
\begin{align*}
\left(\sup\limits_{h,t}\frac{1}{h^{3}}\fint_{\Omega(h)}\left|\textbf{u}^{h}\right|^{6}dx\right)^{\frac{1}{3}}&\lesssim\sup\limits_{h,t}\frac{1}
{h^{\frac{1}{3}}}\fint_{\Omega(h)}\left|\nabla\textbf{u}^{h}\right|^{2}dx=\sup\limits_{h,t}\left(h^{\frac{13}{6}}\cdot\frac{1}
{h\sqrt{h}}\fint_{\Omega(h)}\left|\nabla\textbf{u}^{h}\right|^{2}dx\right)<\infty.
\end{align*}
We know $\frac{1}{\sqrt{h}}\fint_{0}^{h}\textbf{u}^{h}\cdot\textbf{u}_{t}^{h}dx_{3}$ is uniformly bounded in $L^{2}(0,T;L^{\frac{6}{5}})$ in combination with \eqref{4.8}, and $\fint_{0}^{h}u_{3}^{h}dx_{3}\rightarrow 0$ in $L^{2}(0,T;L^{6}(\Omega))$ by \eqref{5.2}. Then we find that
$$\frac{1}{\sqrt{h}}\int_{0}^{T}\int_{\Omega}\fint_{0}^{h}\textbf{u}^{h}\cdot\textbf{u}_{t}^{h}dx_{3}\cdot\fint_{0}^{h}u_{3}^{h}dx_{3}
\cdot\phi dxdt\rightarrow 0.$$
The conclusion of Step 1 is proved.

\underline{Step 2}: $\frac{1}{\sqrt{h}}\fint_{0}^{h}\left|\textbf{u}^{h}\right|^{2}\partial_{t}u_{3}^{h}dx_{3}\rightharpoonup|u|^{2}\nabla'\left(u\wedge\nabla'u\right)-
|u|^{2}u\wedge\nabla'v \text{ in } H^{-1}(0,T;L^{\frac{6}{5}}(\Omega))$, i.e.,
for any $\phi\in C_{0}^{\infty}(\Omega\times(0,T))$,
$$\frac{1}{\sqrt{h}}\int_{0}^{T}\fint_{\Omega(h)}\left|\textbf{u}^{h}\right|^{2}\partial_{t}u_{3}^{h}\phi dxdt\rightarrow\int_{0}^{T}\int_{\Omega}\left[|u|^{2}\nabla'(u\wedge\nabla'u)-
|u|^{2}u\wedge\nabla'v\right]\phi dxdt.$$
Substitute $\Phi=\left|\textbf{u}^{h}\right|^{2}\hat{e}_{3}\phi$, $\phi\in C_{0}^{\infty}(\Omega\times(0,T))$ into the equation \eqref{5.1}
\begin{align}\label{5.6}
\frac{1}{\sqrt{h}}\int_{0}^{T}\!\!\!\fint_{\Omega\left(h\right)}\left|\textbf{u}^{h}\right|^{2}\partial_{t}u^{h}_{3}\phi dxdt&=\int_{0}^{T}\!\!\!\fint_{\Omega\left(h\right)}\Big\{-\left(u^{h}\wedge\nabla u^{h}\right)\cdot\nabla\left(\left|\textbf{u}^{h}\right|^{2}\phi\right)-\left(u^{h}\wedge\nabla v^{h}\right)\cdot\left|\textbf{u}^{h}\right|^{2}\phi\notag\\
&\quad-h\nabla u_{3}^{h}\cdot\nabla\left(\left|\textbf{u}^{h}\right|^{2}\phi\right)-\frac{1}{\chi_{11}}u_{3}^{h}\cdot\left|\textbf{u}^{h}\right|^{2}\phi-\frac{\mu}{\chi_{11}}\left|\textbf{u}^{h}\right|^{2}
u^{h}_{3}\cdot\left|\textbf{u}^{h}\right|^{2}\phi\notag\\
&\quad-h\frac{\partial v^{h}}{\partial x_{3}}\cdot\left|\textbf{u}^{h}\right|^{2}\phi\Big\}dxdt\notag\\
&=:\int_{0}^{T}\!\!\!\fint_{\Omega\left(h\right)}\left[-\left(u^{h}\wedge\nabla u^{h}\right)\cdot\nabla\left(\left|\textbf{u}^{h}\right|^{2}\phi\right)-\left(u^{h}\wedge\nabla v^{h}\right)\cdot\left|\textbf{u}^{h}\right|^{2}\phi\right]dxdt\notag\\
&\quad+A+B+C+D,
\end{align}
where
$$A=-h\int_{0}^{T}\fint_{\Omega(h)}\nabla u_{3}^{h}\cdot\nabla\left(\left|\textbf{u}^{h}\right|^{2}\phi\right)dxdt,\quad B=-\frac{1}{\chi_{11}}\int_{0}^{T}\int_{\Omega(h)}u_{3}^{h}\cdot\left|\textbf{u}^{h}\right|^{2}\phi dxdt,$$
$$C=-\frac{\mu}{\chi_{11}}\int_{0}^{T}\fint_{\Omega(h)}\left|\textbf{u}^{h}\right|^{4}u_{3}^{h}\phi dxdt,\quad D=-h\int_{0}^{T}\fint_{\Omega(h)}\frac{\partial v^{h}}{\partial x_{3}}\cdot\left|\textbf{u}^{h}\right|^{2}\phi dxdt.$$

Next, we consider the limiting problem of the right-hand side of \eqref{5.6} when $h\rightarrow 0$. For the term $A$, applying H\"{o}lder's inequality, \eqref{4.6},  and  \eqref{4.7}, we obtain
\begin{align*}
\left|\int_{0}^{T}\!\!\!\fint_{\Omega(h)}\nabla u_{3}^{h}\cdot\nabla\left(\left|\textbf{u}^{h}\right|^{2}\phi\right)dxdt\right|&=\left|\int_{0}^{T}\!\!\!\fint_{\Omega(h)}\Delta u^{h}_{3}\cdot\left|\textbf{u}^{h}\right|^{2}\phi dxdt\right|\\
&\leq\left(\int_{0}^{T}\!\!\!\fint_{\Omega(h)}\left|\Delta u_{3}^{h}\right|^{2}dxdt\right)^{\frac{1}{2}}\left(\int_{0}^{T}\!\!\!\fint_{\Omega(h)}\left|\textbf{u}^{h}\right|^{4}\left|\phi\right|^{2}dxdt\right)
^{\frac{1}{2}}<\infty,\quad \forall h,
\end{align*}
then $A\rightarrow 0.$

For the term $B$,  employing  Lemma \ref{lemma3.3}, H\"{o}lder's inequality, \eqref{4.5},  and  \eqref{4.6}, we have
\begin{align*}
&\quad\left|\int_{0}^{T}\left[\int_{\Omega}\left(\fint_{0}^{h}u_{3}^{h}\cdot\left|\textbf{u}^{h}\right|^{2}\phi
dx_{3}-\fint_{0}^{h}u_{3}^{h}dx_{3}\cdot\fint_{0}^{h}\left|\textbf{u}^{h}\right|^{2}\phi dx_{3}\right)dx\right]dt\right|\\
&\leq h\left(\int_{0}^{T}\fint_{\Omega(h)}\left|\frac{\partial u_{3}^{h}}{\partial x_{3}}\right|^{2}dxdt\right)^{\frac{1}{2}}\left(\int_{0}^{T}\fint_{\Omega(h)}\left|\textbf{u}^{h}\right|^{4}|\phi|^{2}dxdt\right)^{\frac{1}{2}}\rightarrow 0,
\end{align*}
that is,
$$\int_{0}^{T}\int_{\Omega(h)}u_{3}^{h}\cdot\left|\textbf{u}^{h}\right|^{2}\phi dxdt\rightarrow\int_{0}^{T}\int_{\Omega}\fint_{0}^{h}u_{3}^{h}dx_{3}\cdot\fint_{0}^{h}\left|\textbf{u}^{h}\right|^{2}\phi dx_{3}dxdt.$$
We utilize H\"{o}lder's inequality, \eqref{4.6} and \eqref{5.2} to get
$$\int_{0}^{T}\int_{\Omega}\fint_{0}^{h}u_{3}^{h}dx_{3}\cdot\fint_{0}^{h}\left|\textbf{u}^{h}\right|^{2}\phi dx_{3}dxdt\rightarrow 0,$$
which yields $B\rightarrow 0$.

For the term $C$, using Lemma \ref{lemma3.3}, H\"{o}lder's inequality, \eqref{4.14}, and \eqref{4.16}, we deduce
\begin{align*}
&\quad\left|\int_{0}^{T}\left[\int_{\Omega}\left(\fint_{0}^{h}\left|\textbf{u}^{h}\right|^{4}u_{3}^{h}\phi dx_{3}-\fint_{0}^{h}\left|\textbf{u}^{h}\right|^{4}\phi dx_{3}\cdot\fint_{0}^{h}u_{3}^{h}dx_{3}\right)dx\right]dt\right|\\
&\leq h\left[\int_{0}^{T}\left(\fint_{\Omega(h)}\left|\textbf{u}^{h}\right|^{6}|\phi|^{\frac{3}{2}}dx\right)^{\frac{4}{3}}dt\right]^{\frac{1}{2}}\left[\int_{0}^{T}\left(\fint_{\Omega(h)}
\left|\frac{\partial u_{3}^{h}}{\partial x_{3}}\right|^{3}dx\right)^{\frac{2}{3}}dt\right]^{\frac{1}{2}}\rightarrow 0,
\end{align*}
that is,
$$\int_{0}^{T}\fint_{\Omega(h)}\left|\textbf{u}^{h}\right|^{4}u_{3}^{h}\phi dxdt\rightarrow\int_{0}^{T}\int_{\Omega}\fint_{0}^{h}\left|\textbf{u}^{h}\right|^{4}\phi dx_{3}\cdot\fint_{0}^{h}u_{3}^{h}dx_{3}dxdt.$$
Together with \eqref{5.2} and \eqref{4.14}, we can easily get that
$$\int_{0}^{T}\int_{\Omega}\fint_{0}^{h}\left|\textbf{u}^{h}\right|^{4}\phi dx_{3}\cdot\fint_{0}^{h}u_{3}^{h}dx_{3}dxdt\rightarrow 0,$$
that is, $C\rightarrow 0$.

For the term $D$, we have
$$|D|=\sqrt{h}\cdot\sqrt{h}\left|\int_{0}^{T}\fint_{\Omega(h)}\frac{\partial v^{h}}{\partial x_{3}}\cdot\left|\textbf{u}^{h}\right|^{2}\phi dxdt\right|.$$
By H\"{o}lder's  inequality, \eqref{4.6} and \eqref{4.9}, one has
\begin{align*}
\sqrt{h}\left|\int_{0}^{T}\fint_{\Omega(h)}\frac{\partial v^{h}}{\partial x_{3}}\cdot\left|\textbf{u}^{h}\right|^{2}\phi dxdt\right|\leq\left(\int_{0}^{T}\int_{\mathbb{R}^{3}}\left|\frac{\partial v^{h}}{\partial x_{3}}\right|^{2}dxdt\right)^{\frac{1}{2}}\left(\int_{0}^{T}\fint_{\Omega(h)}\left|\textbf{u}^{h}\right|^{4}|\phi|^{2}dxdt\right)^{\frac{1}{2}}
<\infty,\quad \forall h,
\end{align*}
it follows that $D\rightarrow 0.$

Inserting the above estimates into \eqref{5.6}, we get
$$\frac{1}{\sqrt{h}}\int_{0}^{T}\!\!\!\fint_{\Omega(h)}\left|\textbf{u}^{h}\right|^{2}\partial_{t}u^{h}_{3}\phi dxdt\rightarrow\int_{0}^{T}\!\!\!\fint_{\Omega(h)}\left[-\left(u^{h}\wedge\nabla u^{h}\right)\cdot\nabla\left(\left|\textbf{u}^{h}\right|^{2}\phi\right)-\left(u^{h}\wedge\nabla v^{h}\right)\cdot\left|\textbf{u}^{h}\right|^{2}\phi\right]dxdt.$$

Now, we  prove
\begin{align}\label{5.12}
&\int_{0}^{T}\fint_{\Omega(h)}\left[-\left(u^{h}\wedge\nabla u^{h}\right)\cdot\nabla\left(\left|\textbf{u}^{h}\right|^{2}\phi\right)-\left(u^{h}\wedge\nabla v^{h}\right)\cdot\left|\textbf{u}^{h}\right|^{2}\phi\right]dxdt\notag\\
&\quad\rightarrow\int_{0}^{T}\int_{\Omega}\left[\left(u\wedge\Delta'u\right)\cdot\left|u\right|^{2}\phi-
\left(u\wedge\nabla'v\right)\cdot\left|u\right|^{2}\phi\right]dxdt.
\end{align}
First, we prove that
$$-\int_{0}^{T}\fint_{\Omega(h)}\left(u^{h}\wedge\nabla u^{h}\right)\cdot\nabla\left(\left|\textbf{u}^{h}\right|^{2}\phi\right)dxdt\rightarrow
\int_{0}^{T}\int_{\Omega}\left(u\wedge\Delta'u\right)\cdot\left|u\right|^{2}\phi dxdt.$$
By virtue of Lemma \ref{lemma3.3}, H\"{o}lder's inequality, \eqref{4.7}, \eqref{4.14}, and \eqref{4.15}, we obtain
\begin{align}\label{5.14}
&\quad\left|\int_{0}^{T}\int_{\Omega}\left(\fint_{0}^{h}u_{1}^{h}\Delta u_{2}^{h}\left|\textbf{u}^{h}\right|^{2}\phi dx_{3}-\fint_{0}^{h}u_{1}^{h}dx_{3}\cdot\fint_{0}^{h}\Delta u_{2}^{h}\left|\textbf{u}^{h}\right|^{2}\phi dx_{3}\right)dxdt\right|\notag\\
&\lesssim h\left(\sup\limits_{t}\fint_{\Omega(h)}\left|\textbf{u}^{h}\right|^{6}dx\right)^{\frac{1}{3}}\left[\int_{0}^{T}\left(\fint_{\Omega(h)}\left|\frac{\partial u_{1}^{h}}{\partial x_{3}}\right|^{6}dx\right)^{\frac{1}{3}}dt\right]^{\frac{1}{2}}\left(\int_{0}^{T}\fint_{\Omega(h)}\left|\Delta u_{2}^{h}\right|^{2}dxdt\right)^{\frac{1}{2}}\rightarrow 0,
\end{align}
that is,
$$\int_{0}^{T}\fint_{\Omega(h)}u_{1}^{h}\Delta u_{2}^{h}\left|\textbf{u}^{h}\right|^{2}\phi dxdt\rightarrow\int_{0}^{T}\int_{\Omega}\left(\fint_{0}^{h}u_{1}^{h}dx_{3}\right)\cdot\left(\fint_{0}^{h}\Delta u_{2}^{h}\left|\textbf{u}^{h}\right|^{2}\phi dx_{3}\right)dxdt.$$
By \eqref{5.2}, we have
\begin{align}\label{5.7}
\fint_{0}^{h}u_{1}^{h}dx_{3}\rightarrow u_{1} \text{ in } L^{6}(0,T;L^{6}(\Omega)).
\end{align}
Using Lemma \ref{lemma3.3}, H\"{o}lder's inequality, \eqref{4.7}, \eqref{4.14}, and \eqref{4.16}, we can also get
\begin{align}\label{5.15}
&\quad\left|\int_{0}^{T}\int_{\Omega}\left(\fint_{0}^{h}\Delta u_{2}^{h}\left|\textbf{u}^{h}\right|^{2}dx_{3}-\fint_{0}^{h}\Delta u_{2}^{h}dx_{3}\cdot\fint_{0}^{h}\left|\textbf{u}^{h}\right|^{2}dx_{3}\right)dxdt\right|\notag\\
&\lesssim h\left(\int_{0}^{T}\fint_{\Omega(h)}\left|\Delta u_{2}^{h}\right|^{2}dxdt\right)^{\frac{1}{2}}\sup\limits_{t}\left(\fint_{\Omega(h)}\left|\textbf{u}^{h}\right|^{6}dx\right)^{\frac{1}{6}}\left
[\int_{0}^{T}\left(\fint_{\Omega(h)}\left|\frac{\partial\textbf{u}^{h}}{\partial x_{3}}\right|^{3}dx\right)^{\frac{2}{3}}dt\right]^{\frac{1}{2}}\rightarrow 0,
\end{align}
that is,
$$\int_{0}^{T}\fint_{\Omega(h)}\Delta u_{2}^{h}\left|\textbf{u}^{h}\right|^{2}dx_{3}dxdt\rightarrow\int_{0}^{T}\int_{\Omega}\left(\fint_{0}^{h}\Delta u_{2}^{h}dx_{3}\right)\cdot\left(\fint_{0}^{h}\left|\textbf{u}^{h}\right|^{2}dx_{3}\right)dxdt.$$
By \eqref{4.11}, we have
\begin{align}\label{5.8}
\fint_{0}^{h}\Delta u_{2}^{h}dx_{3}\rightharpoonup\Delta u_{2} \text{ in }  L^{2}(0,T;L^{2}(\Omega)).
\end{align}
Let $W=\left\{v\in L^{3}(0,T;W^{1,\frac{3}{2}}(\Omega)),v_{t}\in L^{2}(0,T;L^{\frac{6}{5}}(\Omega))\right\}$, by Aubin-Lions Lemma and the compact embedding theorem, we conclude that
$$W\hookrightarrow\hookrightarrow L^{3}(0,T;L^{3}(\Omega)).$$
Through simple calculation and using H\"{o}lder's inequality, \eqref{4.5}, \eqref{4.8}, \eqref{4.14}, we can know that $\fint_{0}^{h}\left|\textbf{u}^{h}\right|^{2}dx_{3}$ is uniformly bounded about $h$ in $W$. Then $\fint_{0}^{h}\left|\textbf{u}^{h}\right|^{2}dx_{3}$ converges weakly in $W$ by the weak compactness argument.

Utilizing Lemma \ref{lemma3.3}, \eqref{4.4}, and \eqref{4.5}, we obtain
\begin{align*}
&\left|\int_{0}^{T}\int_{\Omega}\left[\fint_{0}^{h}\left|\textbf{u}^{h}\right|^{2}dx_{3}-\left(\fint_{0}^{h}\textbf{u}^{h}dx_{3}\right)^{2}\right]\varphi dxdt\right|\\
&\leq h\left(\int_{0}^{T}\fint_{\Omega(h)}\left|\textbf{u}^{h}\right|^{2}|\varphi|^{2}dxdt\right)^{\frac{1}{2}}\left(\int_{0}^{T}\fint_{\Omega(h)}
\left|\frac{\partial\textbf{u}^{h}}
{\partial x_{3}}\right|^{2}dxdt\right)^{\frac{1}{2}}\\
&\rightarrow 0.
\end{align*}
Hence, $\fint_{0}^{h}\left|\textbf{u}^{h}\right|^{2}dx_{3}\rightharpoonup\left(\fint_{0}^{h}\textbf{u}^{h}dx_{3}\right)^{2} \text{ in } W$.  By H\"{o}lder's inequality, we have
\begin{align*}
&\quad\left|\int_{0}^{T}\!\!\int_{\Omega}\left[\left(\fint_{0}^{h}\textbf{u}^{h}dx_{3}\right)^{2}-|\textbf{u}|^{2}\right]\phi dxdt\right|=\left|\int_{0}^{T}\!\!\int_{\Omega}\left(\fint_{0}^{h}\textbf{u}^{h}dx_{3}-\textbf{u}\right)\left(\fint_{0}^{h}\textbf{u}^{h}dx_{3}+\textbf{u}\right)\phi dxdt\right|\\
&\leq\left(\int_{0}^{T}\!\!\int_{\Omega}\left|\fint_{0}^{h}\textbf{u}^{h}dx_{3}-\textbf{u}\right|^{2}dxdt\right)^{\frac{1}{2}}\left(\int_{0}^{T}\!\!\int_{\Omega}\left|\fint_{0}^{h}\textbf{u}^{h}dx_{3}
+\textbf{u}\right|^{2}|\phi|^{2}dxdt\right)^{\frac{1}{2}},
\end{align*}
which implies that $\left(\fint_{0}^{h}\textbf{u}^{h}dx_{3}\right)^{2}\rightharpoonup|\textbf{u}|^{2} \text{ in } W$  by \eqref{5.2}.
Therefore, we have
$$\fint_{0}^{h}\left|\textbf{u}^{h}\right|^{2}dx_{3}\rightharpoonup|\textbf{u}|^{2} \text{ in } W,$$
and further get
\begin{align}\label{5.9}
\fint_{0}^{h}\left|\textbf{u}^{h}\right|^{2}dx_{3}\rightarrow|\textbf{u}|^{2} \text{ in } L^{3}(0,T;L^{3}(\Omega)).
\end{align}
It can be obtained by combining \eqref{5.8} and \eqref{5.9} that
\begin{align}\label{5.10}
\fint_{0}^{h}\Delta u_{2}^{h}\left|\textbf{u}^{h}\right|^{2}dx_{3}\rightharpoonup\Delta u_{2}\left|\textbf{u}\right|^{2}\text{ in } L^{\frac{6}{5}}(0,T;L^{\frac{6}{5}}(\Omega)).
\end{align}
Utilizing \eqref{5.7} and \eqref{5.10}, one has
$$\int_{0}^{T}\int_{\Omega}\fint_{0}^{h}u_{1}^{h}dx_{3}\cdot\fint_{0}^{h}\Delta u_{2}^{h}\left|\textbf{u}^{h}\right|^{2}\phi dx_{3}dxdt\rightarrow\int_{0}^{T}\int_{\Omega}u_{1}\Delta u_{2}|\textbf{u}|^{2}\phi dxdt,$$
i.e.,
$$\int_{0}^{T}\fint_{\Omega(h)}u_{1}^{h}\Delta u_{2}^{h}\left|\textbf{u}^{h}\right|^{2}\phi dxdt\rightarrow\int_{0}^{T}\int_{\Omega}u_{1}\Delta u_{2}|\textbf{u}|^{2}\phi dxdt.$$
Similarly, we obtain
$$\int_{0}^{T}\fint_{\Omega(h)}u_{2}^{h}\Delta u_{1}^{h}\left|\textbf{u}^{h}\right|^{2}\phi dxdt\rightarrow\int_{0}^{T}\int_{\Omega}u_{2}\Delta u_{1}|\textbf{u}|^{2}\phi dxdt.$$
Noticing that
\begin{align*}
-\int_{0}^{T}\fint_{\Omega(h)}\left(u^{h}\wedge\nabla u^{h}\right)\cdot\nabla\left(\left|\textbf{u}^{h}\right|^{2}\phi\right)dxdt=
\int_{0}^{T}\fint_{\Omega(h)}\left(u^{h}\wedge\Delta u^{h}\right)\cdot\left|\textbf{u}^{h}\right|^{2}\phi dxdt
\end{align*}
and $|\textbf{u}|^{2}=|u|^{2}$, we get
$$-\int_{0}^{T}\fint_{\Omega(h)}\left(u^{h}\wedge\nabla u^{h}\right)\cdot\nabla\left(\left|\textbf{u}^{h}\right|^{2}\phi\right)
dxdt\rightarrow
\int_{0}^{T}\int_{\Omega}(u\wedge\Delta'u)\cdot|u|^{2}\phi dxdt.$$
Similar to \eqref{5.12}, we will prove
\begin{align}\label{5.13}
\int_{0}^{T}\fint_{\Omega(h)}\left(u^{h}\wedge\nabla v^{h}\right)\cdot\left|\textbf{u}^{h}\right|^{2}\phi dxdt\rightarrow\int_{0}^{T}\int_{\Omega}u\wedge\nabla'v\cdot|u|^{2}\phi dxdt.
\end{align}
According to Lemma \ref{lemma3.3}, H\"{o}lder's inequality, \eqref{4.9}, \eqref{4.14} and \eqref{4.15}, we obtain
\begin{align*}
&\left|\int_{0}^{T}\int_{\Omega}\left(\fint_{0}^{h}u_{1}^{h}\frac{\partial v^{h}}{\partial x_{2}}\left|\textbf{u}^{h}\right|^{2}\phi dx_{3}-\fint_{0}^{h}u_{1}^{h}dx_{3}\cdot\fint_{0}^{h}\frac{\partial v^{h}}{\partial x_{2}}\left|\textbf{u}^{h}\right|^{2}\phi dx_{3}\right)dxdt\right|\notag\\
&\lesssim\sqrt{h}\left(\sup\limits_{t}\fint_{\Omega(h)}\left|\textbf{u}^{h}\right|^{6}dx\right)^{\frac{1}{3}}\left[\int_{0}^{T}
\left(\fint_{\Omega(h)}\left|\frac{\partial u_{1}^{h}}{\partial x_{3}}\right|^{6}dx\right)^{\frac{1}{3}}dt\right]^{\frac{1}{2}}\left(\int_{0}^{T}\int_{\mathbb{R}^{3}}\left|\frac{\partial v^{h}}{\partial x_{2}}\right|^{2}dxdt\right)^{\frac{1}{2}}\rightarrow 0,
\end{align*}
then
$$\int_{0}^{T}\fint_{\Omega(h)}u_{1}^{h}\frac{\partial v^{h}}{\partial x_{2}}\left|\textbf{u}^{h}\right|^{2}\phi dxdt\rightarrow\int_{0}^{T}\int_{\Omega}\fint_{0}^{h}u_{1}^{h}dx_{3}\cdot\fint_{0}^{h}\frac{\partial v^{h}}{\partial x_{2}}\left|\textbf{u}^{h}\right|^{2}\phi dx_{3}dxdt.$$
Similarly, by Lemma \ref{lemma3.3}, H\"{o}lder's inequality, \eqref{4.9}, \eqref{4.14} and \eqref{4.16}, we get
\begin{align*}
&\quad\left|\int_{0}^{T}\int_{\Omega}\left(\fint_{0}^{h}\frac{\partial v^{h}}{\partial x_{2}}\left|\textbf{u}^{h}\right|^{2}dx_{3}-\fint_{0}^{h}\frac{\partial v^{h}}{\partial x_{2}}dx_{3}\cdot\fint_{0}^{h}\left|\textbf{u}^{h}\right|^{2}dx_{3}\right)dxdt\right|\notag\\
&\lesssim\sqrt{h}\left(\int_{0}^{T}\int_{\mathbb{R}^{3}}\left|\frac{\partial v^{h}}{\partial x_{2}}\right|^{2}dxdt\right)^{\frac{1}{2}}\sup\limits_{t}\left(\fint_{\Omega(h)}\left|\textbf{u}^{h}\right|^{6}dx\right)^{\frac{1}{6}}\left
[\int_{0}^{T}\left(\fint_{\Omega(h)}\left|\frac{\partial\textbf{u}^{h}}{\partial x_{3}}\right|^{3}dx\right)^{\frac{2}{3}}dt\right]^{\frac{1}{2}}\rightarrow 0.
\end{align*}
Then
$$\int_{0}^{T}\fint_{\Omega(h)}\frac{\partial v^{h}}{\partial x_{2}}\left|\textbf{u}^{h}\right|^{2}dx_{3}dxdt\rightarrow\int_{0}^{T}\int_{\Omega}\fint_{0}^{h}\frac{\partial v^{h}}{\partial x_{2}}dx_{3}\cdot\fint_{0}^{h}\left|\textbf{u}^{h}\right|^{2}dx_{3}dxdt.$$

Now, we prove
\begin{align}\label{5.16}
\fint_{0}^{h}\nabla v^{h}dx_{3}\rightharpoonup\nabla v(x_{1},x_{2},0,t) \text{ in } L^{2}(0,T;L^{2}(\Omega)).
\end{align}
It follows from H\"{o}lder's inequality and  \eqref{4.9} that $\fint_{0}^{h}\nabla v^{h}dx_{3}$ is uniformly bounded in $L^{2}(0,T;L^{2}(\Omega))$.
Then $\fint_{0}^{h}\nabla v^{h}dx_{3}$ weakly converges in $L^{2}(0,T;L^{2}(\Omega))$ by the weak compactness argument.
From  Proposition 4.2 in \cite{CM}, we deduce $\fint_{0}^{h}v^{h}dx_{3}\rightarrow v(x_{1},x_{2},0,t) \text{ in } L^{2}(0,T;L^{2}(\Omega))$.
By the fact that $L^{2}(\Omega\times(0,T))\subset H^{-1}(\Omega\times(0,T))$, we infer that $\fint_{0}^{h}\nabla v^{h}dx_{3}$ weakly converges in $H^{-1}(\Omega\times(0,T))$.
Then for any $\psi\in H_{0}^{1}(\Omega\times(0,T))$, we have
\begin{align*}
&\int_{0}^{T}\int_{\Omega}\fint_{0}^{h}\nabla v^{h}dx_{3}\cdot\psi dxdt=-\int_{0}^{T}\int_{\Omega}\fint_{0}^{h}v^{h}dx_{3}\cdot\nabla\psi dxdt\\
&\rightarrow -\int_{0}^{T}\int_{\Omega}v(x_{1},x_{2},0,t)\cdot\nabla\psi dxdt=\int_{0}^{T}\int_{\Omega}\nabla v(x_{1},x_{2},0,t)\cdot\psi dxdt,
\end{align*}
thus \eqref{5.16} holds by the uniqueness of the limit. This combined with \eqref{5.9} yields that
$$\fint_{0}^{h}\frac{\partial v^{h}}{\partial x_{2}}\left|\textbf{u}^{h}\right|^{2}dx_{3}\rightharpoonup\frac{\partial v}{\partial x_{2}}|\textbf{u}|^{2} \text{ in } L^{\frac{6}{5}}(0,T;L^{\frac{6}{5}}(\Omega)).$$
It follows from \eqref{5.7} that
$$\int_{0}^{T}\fint_{\Omega(h)}u_{1}^{h}\frac{\partial v^{h}}{\partial x_{2}}\left|\textbf{u}^{h}\right|^{2}\phi dxdt\rightarrow\int_{0}^{T}\int_{\Omega}u_{1}\frac{\partial v}{\partial x_{2}}|\textbf{u}|^{2}\phi dxdt.$$
Similarly,
$$\int_{0}^{T}\fint_{\Omega(h)}u_{2}^{h}\frac{\partial v^{h}}{\partial x_{1}}\left|\textbf{u}^{h}\right|^{2}\phi dxdt\rightarrow\int_{0}^{T}\int_{\Omega}u_{2}\frac{\partial v}{\partial x_{1}}|\textbf{u}|^{2}\phi dxdt.$$
Therefore, \eqref{5.13} is proved by $|\textbf{u}|^{2}=|u|^{2}$, then \eqref{5.12} holds. The conclusion of Step 2 is proved. Combining the conclusions of Step 1 and Step 2, the proof of  Proposition \ref{proposition5.2} is completed.
\end{proof}

Finally, we use Proposition \ref{proposition5.1} and Proposition \ref{proposition5.2} to derive the limit equation \eqref {4.13}.
\begin{proof}
From Lemma \ref{lemma3.2} and Proposition \ref{proposition5.1}, we have
$$w_{1}^{h}\rightharpoonup u\wedge u_{t} \text{ in } L^{2}(0,T;L^{\frac{6}{5}}(\Omega)).$$
This combined with Proposition \ref{proposition5.2} yields that
\begin{align*}
\int_{0}^{T}\int_{\Omega}w_{1}^{h}\cdot\phi_{t}+\partial_{t}w_{1}^{h}\cdot\phi dxdt&\rightarrow\int_{0}^{T}\int_{\Omega}u\wedge u_{t}\cdot\phi_{t}+|u|^{2}\left[\nabla'\left(u
\wedge\nabla'u\right)-u\wedge\nabla'v\right]\cdot\phi dxdt\\
&=\int_{0}^{T}\int_{\Omega}-u\wedge u_{tt}\cdot\phi+|u|^{2}\left[\nabla'\left(u
\wedge\nabla'u\right)-u\wedge\nabla'v\right]\cdot\phi dxdt\\
&=-\int_{0}^{T}\int_{\Omega}u\wedge\left(u_{tt}-|u|^{2}\Delta'u+|u|^{2}\nabla'v\right)\cdot\phi dxdt.
\end{align*}
Since
$$\int_{0}^{T}\int_{\Omega} w_{1}^{h}\cdot\phi_{t}+\partial_{t}w_{1}^{h}\cdot\phi dxdt=\int_{0}^{T}\int_{\Omega} w_{1}^{h}\phi_{t}-w_{1}^{h}\cdot\phi_{t}dxdt=0,$$
then
$$\int_{0}^{T}\int_{\Omega}u\wedge\left(u_{tt}-|u|^{2}\Delta'u+|u|^{2}\nabla'v\right)\cdot\phi dxdt=0.$$
We get the desired results. The proof of Theorem \ref{theorem4.1} is completed.
\end{proof}
\smallskip \noindent {\bf Acknowledgments.}   H. Wang's research was supported by the National Natural Science Foundation of China (No. 11901066), the Natural Science Foundation of Chongqing (No. cstc2019jcyj-msxmX0167), projects No.2019CDXYST0015 and No.2020CDJQY-A040 supported by the Fundamental Research Funds for the Central Universities.

\end{CJK*}
\end{document}